\providecommand{\U}[1]{\protect\rule{.1in}{.1in}}
\theoremstyle{plain}
\newtheorem{corollary}{Corollary}
\newtheorem{definition}{Definition}
\newtheorem{lemma}{Lemma}
\newtheorem{proposition}{Proposition}
\newtheorem{remark}{Remark}
\newtheorem{theorem}{Theorem}
\numberwithin{equation}{section}
\begin{document}
\title[Topological Orbit Dimension in C*-algebras]{Topological Orbit Dimension of MF C*-algebras }
\author{Qihui Li}
\address{Department of Mathematics, East China University of Science and Technology,
Meilong Road 130, 200237 Shanghai, China.}
\email{qihui\_li@126.com}
\author{Don Hadwin}
\address{Department of Mathematics, University of New Hampshire, Durham, NH 03824, USA.}
\email{don@unh.edu}
\author{Weihua Li}
\address{Department of Science and Mathematics, Columbia College of Chicago, Chicago,
IL, U.S.A.}
\email{wli@colum.edu}
\author{Junhao Shen}
\address{Department of Mathematics, University of New Hampshire, Durham, NH 03824, USA.}
\email{junhao.shen@unh.edu}
\thanks{The research of the first author is partially supported by National Natural
Science Foundation of China (11671133).}
\subjclass[2000]{Primary 46L10; Secondary 46L54}
\keywords{Topological free entropy dimension, C*-algebra, Topological orbit dimension}

\begin{abstract}
This paper is a continuation of our work on D. Voiculescu's topological free
entropy dimension $\delta_{\text{top}}\left(  x_{1},\ldots,x_{n}\right)  $ for
$\vec{x}=\left(  x_{1},\ldots,x_{n}\right)  $ of elements in a unital
C*-algebra. In this paper we first prove that $\delta_{top}\left(
x_{1},\cdots,x_{n}\right)  =1-\frac{1}{\dim\mathcal{A}}$ in which C*$\left(
\vec{x}\right)  $ is MF-nuclear and inner QD. Then we give a relation between
the topological orbit dimension $\mathfrak{K}_{top}^{(2)}$ and the modified
free orbit dimension $\mathfrak{K}_{2}^{(2)}$ by using MF-traces. We also
introduce a new invariant $\mathfrak{K}_{top}^{(3)}$ which is a modification
of the topological orbit dimension $\mathfrak{K}_{top}^{(2)}$ when
$\mathfrak{K}_{top}^{(2)}$ is defined. As the applications of $\mathfrak{K}%
_{top}^{(3)},$ We prove that $\mathfrak{K}_{top}^{(3)}\left(  \mathcal{A}%
\right)  =0$ if $\mathcal{A}$ has property c*-$\Gamma$ and has no
finite-dimensional representations. We also give the definition of property
MF-c*-$\Gamma.$ We then conclude that, for the unital MF C*-algebra
$\mathcal{A}=C^{\ast}\left(  x_{1},x_{2},\ldots,x_{n}\right)  \ $with no
finite-dimensional representations, if $\mathcal{A}$ has property
MF-c*-$\Gamma$, then $\mathfrak{K}_{top}^{\left(  3\right)  }\left(
\mathcal{A}\right)  =0.$

\end{abstract}
\maketitle

\section{Introduction}

This paper is a continuation of the work in \cite{HLS}, \cite{HS2},
\cite{HLSW} on D. Voiculescu's topological free entropy dimension
$\delta_{\text{top}}\left(  x_{1},\ldots,x_{n}\right)  $ for an $n$-tuple
$\vec{x}=\left(  x_{1},\ldots,x_{n}\right)  $ of elements in a unital
C*-algebra. Here we first give a relation between the topological orbit
dimension $\mathfrak{K}_{top}^{(2)}$ and the modified free orbit dimension
$\mathfrak{K}_{2}^{(2)}$ by using MF-traces. This result allows us to give a
new proof of our main result in \cite{HLSW}, which gave an estimation of the
upper bound of topological free entropy dimension for MF-nuclear algebras. In
\cite{HLSW}, we have shown that $\delta_{top}\left(  x_{1},\cdots
,x_{n}\right)  =1-\frac{1}{\dim\mathcal{A}}$ if C*$\left(  x_{1},\cdots
,x_{n}\right)  $ is MF-nuclear and residually finite-dimensional (RFD). In
\cite{BK2}, Blackadar and Kirchberg proved that all RFD C*-algebras are inner
quasidiagonal (inner QD). So it is natural to ask whether the topological free
entropy dimension of an MF-nuclear and inner QD algebra is unrelated to its
generating family. In this paper we prove that $\delta_{top}\left(
x_{1},\cdots,x_{n}\right)  =1-\frac{1}{\dim\mathcal{A}}$ if C*$\left(
x_{1},\cdots,x_{n}\right)  $ is MF-nuclear and inner QD$.$

In this article, we also introduce a new invariant $\mathfrak{K}_{top}^{(3)}$
which is a modification of the topological orbit dimension $\mathfrak{K}%
_{top}^{(2)}$ when $\mathfrak{K}_{top}^{(2)}$ is defined. The idea for
defining $\mathfrak{K}_{top}^{(3)}$ arise from the concept $\mathfrak{K}_{3}$
in \cite{HW}. We then extend the domain of $\mathfrak{K}_{top}^{(3)}$ to all
MF algebras and prove that $\mathfrak{K}_{top}^{(3)}$ is a C*-algebra
invariant. We also modify the notion $\mathfrak{K}_{3}$ in \cite{HW} by using
the modified free orbit dimension $\mathfrak{K}_{2}^{(2)}$ and denote it by
$\mathfrak{K}_{3}^{(3)}.$ We give a relation between $\mathfrak{K}_{top}%
^{(3)}$ and $\mathfrak{K}_{3}^{(3)}$ for every MF algebra by using the
relation between the topological orbit dimension $\mathfrak{K}_{top}^{(2)}$
and the modified free orbit dimension $\mathfrak{K}_{2}^{(2)}.$ Several
properties of $\mathfrak{K}_{top}^{(3)}$ are given as follows:

\begin{enumerate}
\item $\mathfrak{K}_{top}^{(3)}\left(  \mathcal{N}_{1}\right)  =\mathfrak{K}%
_{top}^{(3)}\left(  \mathcal{N}_{2}\right)  $ if $C^{\ast}\left(
\mathcal{N}_{1}\right)  =C^{\ast}\left(  \mathcal{N}_{2}\right)  .$

\item If $\mathcal{A}$ is finite generated, then $\mathfrak{K}_{top}^{\left(
3\right)  }\left(  \mathcal{A}\right)  =0$ if $\mathfrak{K}_{top}^{\left(
2\right)  }\left(  \mathcal{A}\right)  =0.$

\item If $\mathcal{N}_{1}\cap\mathcal{N}_{2}$ has no finite-dimensional
representation , then
\[
\mathfrak{K}_{top}^{(3)}\left(  C^{\ast}\left(  \mathcal{N}_{1}\cup
\mathcal{N}_{2}\right)  \right)  \leq\mathfrak{K}_{top}^{(3)}\left(
\mathcal{N}_{1}\right)  +\mathfrak{K}_{top}^{(3)}\left(  \mathcal{N}%
_{2}\right)  .
\]

\item If $\mathcal{N}$ is an MF C*-algebra and $\mathcal{A\subseteq N}$ is a
C*-subalgebra with no finite-dimensional representation$.$ If there is an
unitary $u\in\mathcal{N}$ such that $u\mathcal{A}u^{\ast}\subseteq\mathcal{A}%
$. Then
\[
\mathfrak{K}_{top}^{(3)}\left(  C^{\ast}\left(  \mathcal{A\cup}\left\{
u\right\}  \right)  \right)  \leq\mathfrak{K}_{top}^{(3)}\left(
\mathcal{A}\right)  .
\]

\end{enumerate}

As an application, we prove that $\mathfrak{K}_{top}^{(3)}\left(
\mathcal{A}\right)  =0$ if $\mathcal{A}$ has property c*-$\Gamma$ and has no
finite-dimensional representations. We also give the definition of property
MF-c*-$\Gamma.$ We then conclude that, for the unital MF C*-algebra
$\mathcal{A}=C^{\ast}\left(  x_{1},x_{2},\ldots,x_{n}\right)  \ $with no
finite-dimensional representations, if $\mathcal{A}$ has property
MF-c*-$\Gamma$, then $\mathfrak{K}_{top}^{\left(  3\right)  }\left(
\mathcal{A}\right)  =0.$

The organization of the paper is as follows. In section 2, we recall the
definition of topological free entropy dimension $\delta_{top}\left(
x_{1},\cdots,x_{n}\right)  $ and topological orbit dimension $\mathfrak{K}%
_{top}^{\left(  2\right)  }\left(  x_{1},\cdots,x_{n}\right)  $ of $n$-tuple
$\left(  x_{1},\cdots,x_{n}\right)  $ of elements in a unital C*-algebra. In
section 3, we first give a relation between $\mathfrak{K}_{top}^{\left(
2\right)  }\left(  x_{1},\cdots,x_{n}\right)  $ and $\sup_{\tau\in
\mathcal{T}_{MF}\left(  \mathcal{A}\right)  }\mathfrak{K}_{2}^{\left(
2\right)  }\left(  x_{1},\cdots,x_{n},\tau\right)  $ where $\mathcal{T}%
_{MF}\left(  \mathcal{A}\right)  $ is the set of all MF-traces on C*-algebra
$\mathcal{A}=C^{\ast}\left(  x_{1},\cdots,x_{n}\right)  .$ Then we give a new
proof of our main result in \cite{HLSW}. In section 4, we discuss the
topological free entropy dimension of MF-nuclear and inner QD C*-algebra, we
show that $\delta_{top}\left(  x_{1},\cdots,x_{n}\right)  =1-\frac
{1}{dim\mathcal{A}}$ as $\mathcal{A}=C^{\ast}\left(  x_{1},\cdots
,x_{n}\right)  $ MF-nuclear and inner QD. We introduce topological orbit
dimension $\mathfrak{K}_{top}^{\left(  3\right)  }$ for general MF-algebras in
section 5. Several properties of $\mathfrak{K}_{top}^{\left(  3\right)  }$ are
discussed there. Section 6 is focus on the applications of $\mathfrak{K}%
_{top}^{\left(  3\right)  }$ in central sequence algebras. We prove that
$\mathfrak{K}_{top}^{(3)}\left(  \mathcal{A}\right)  =0$ if $\mathcal{A}$ has
property c*-$\Gamma$ and has no finite-dimensional representations. We also
give the definition of property MF-c*-$\Gamma.$ We then conclude that, for the
unital MF C*-algebra $\mathcal{A}=C^{\ast}\left(  x_{1},x_{2},\ldots
,x_{n}\right)  \ $with no finite-dimensional representations, if $\mathcal{A}$
has property MF-c*-$\Gamma$, then $\mathfrak{K}_{top}^{\left(  3\right)
}\left(  \mathcal{A}\right)  =0.$

\section{Definitions and Preliminaries}

In this section, we are going to recall Voiculescu's definition of the
topological free entropy dimension and topological orbit dimension of
$n$-tuples of elements in a unital C*-algebra.

\subsection{A Covering of a set in a metric space}

Suppose $(X,d)$ is a metric space and $K$ is a subset of $X.$ A family of
balls in $X$ is called a covering of $K$ if the union of these balls covers
$K$ and the centers of these balls lie in $K.$

\subsection{Covering numbers in complex matrix algebra $(\mathcal{M}%
_{k}(\mathbb{C}))^{n}$}

Let $\mathcal{M}_{k}(\mathbb{C})$ be the $k\times k$ full matrix algebra with
entries in $\mathbb{C},$ and $\tau_{k}$ be the normalized trace on
$\mathcal{M}_{k}(\mathbb{C}),$ i.e., $\tau_{k}=\frac{1}{k}Tr,$ where $Tr$ is
the usual trace on $\mathcal{M}_{k}(\mathbb{C}).$ Let $\mathcal{U}(k)$ denote
the group of all unitary matrices in $\mathcal{M}_{k}(\mathbb{C}).$ Let
$\left(  \mathcal{M}_{k}(\mathbb{C})\right)  ^{n}$ denote the direct sum of
$n$ copies of $\mathcal{M}_{k}(\mathbb{C}).$ Let $\mathcal{M}_{k}%
^{s.a}(\mathbb{C})$ be the subalgebra of $\mathcal{M}_{k}(\mathbb{C})$
consisting of all self-adjoint matrices of $\mathcal{M}_{k}(\mathbb{C}).$ Let
$\left(  \mathcal{M}_{k}^{s.a}(\mathbb{C})\right)  ^{n}$ be the direct sum (
or orthogonal sum) of $n$ copies of $\mathcal{M}_{k}^{s.a}(\mathbb{C}).$ Let
$\left\Vert \cdot\right\Vert $ be an operator norm on $\mathcal{M}_{k}\left(
\mathbb{C}\right)  ^{n}$ defined by
\[
\left\Vert \left(  A_{1},\ldots,A_{n}\right)  \right\Vert =\max\left\{
\left\Vert A_{1}\right\Vert ,\ldots,\left\Vert A_{n}\right\Vert \right\}
\]
for all $\left(  A_{1},\ldots,A_{n}\right)  $ in $\mathcal{M}_{k}\left(
\mathbb{C}\right)  ^{n}.$ Let $\left\Vert \cdot\right\Vert _{2}$ denote the
trace norm induced by $\tau_{k}$ on $M_{k}\left(  \mathbb{C}\right)  ^{n}%
,$i.e.,%
\[
\left\Vert \left(  A_{1},\ldots,A_{n}\right)  \right\Vert _{2}=\sqrt{\tau
_{k}(A_{1}^{\ast}A_{1})+\ldots+\tau_{k}(A_{n}^{\ast}A_{n})}%
\]
for all $\left(  A_{1},\ldots,A_{n}\right)  $ in $\mathcal{M}_{k}\left(
\mathbb{C}\right)  ^{n}.$

For every $\omega>0,$ we define the $\omega$-$\left\Vert \cdot\right\Vert
$-ball $Ball(B_{1},\ldots,B_{n};\omega,\left\Vert \cdot\right\Vert )$ centered
at $(B_{1},\ldots,B_{n})$ in $\mathcal{M}_{k}\left(  \mathbb{C}\right)  ^{n}$
to the subset of $\mathcal{M}_{k}\left(  \mathbb{C}\right)  ^{n}$ consisting
of all $(A_{1},\ldots,A_{n})$ in $\mathcal{M}_{k}\left(  \mathbb{C}\right)
^{n}$ such that
\[
\left\Vert \left(  A_{1},\ldots,A_{n}\right)  -(B_{1},\ldots,B_{n})\right\Vert
<\omega.
\]

\begin{definition}
Suppose that $\Sigma$ is a subset of $\mathcal{M}_{k}\left(  \mathbb{C}%
\right)  ^{n}.$ We define $\nu_{\mathcal{1}}(\Sigma,$ $\omega)$ to be the
minimal number of $\omega$-$\left\Vert \cdot\right\Vert $-balls that consist a
covering of $\Sigma$ in $\mathcal{M}_{k}\left(  \mathbb{C}\right)  ^{n}.$
\end{definition}

For every $\omega>0,$ we define the $\omega$-$\left\Vert \cdot\right\Vert
_{2}$-ball $Ball$ $(B_{1},\ldots,B_{n};\omega,\left\Vert \cdot\right\Vert
_{2})$ centered at $(B_{1},\ldots,B_{n})$ in $\mathcal{M}_{k}\left(
\mathbb{C}\right)  ^{n}$ to be the subset of $\mathcal{M}_{k}\left(
\mathbb{C}\right)  ^{n}$ consisting of all $(A_{1},\ldots,A_{n})$ in
$\mathcal{M}_{k}\left(  \mathbb{C}\right)  ^{n}$ such that
\[
\left\Vert (A_{1},\ldots,A_{n})-\left(  B_{1},\ldots,B_{n}\right)  \right\Vert
_{2}<\omega.
\]

\begin{definition}
Suppose that $\Sigma$ is a subset of $\mathcal{M}_{k}\left(  \mathbb{C}%
\right)  ^{n}.$ We define $\nu_{2}(\Sigma,\omega)$ to be the minimal number of
$\omega$-$\left\Vert \cdot\right\Vert _{2}$-balls that consist a covering of
$\Sigma$ in $\mathcal{M}_{k}\left(  \mathbb{C}\right)  ^{n}.$
\end{definition}

\subsection{Unitary orbits of balls in $\mathcal{M}_{k}(\mathbb{C})^{n}$}

For every $\omega>0,$ we define the $\omega$-$orbit$-$\left\Vert
\cdot\right\Vert $-ball $\mathcal{U}(B_{1},\ldots,B_{n};\omega)$ centered at
$(B_{1},\ldots,B_{n})$ in $\mathcal{M}_{k}(\mathbb{C})^{n}$ to be the subset
of $\mathcal{M}_{k}(\mathbb{C})^{n}$ consisting of all $(A_{1},\ldots,A_{n})$
in $\mathcal{M}_{k}(\mathbb{C})^{n}$ such that there exists some unitary
matrix $W$ in $\mathcal{U}(k)$ satisfying
\[
\left\Vert (A_{1},\ldots,A_{n})-(WB_{1}W^{\ast},\ldots,WB_{n}W^{\ast
}\right\Vert <\omega.
\]

\begin{definition}
Suppose that $\Sigma$ is a subset of $\mathcal{M}_{k}(\mathbb{C})^{n}.$ We
define $o_{\mathcal{1}}(\Sigma,\omega)$ to be the minimal number of $\omega
$-$orbit$-$\left\Vert \cdot\right\Vert $-balls that consist a covering of
$\Sigma$ in $\mathcal{M}_{k}(\mathbb{C})^{n}.$
\end{definition}

For every $\omega>0,$ we define the $\omega$-$orbit$-$\left\Vert
\cdot\right\Vert _{2}$-ball $\mathcal{U}(B_{1},\ldots B_{n};\omega,\left\Vert
\cdot\right\Vert _{2})$ centered at $(B_{1},\ldots,B_{n})$ in $\mathcal{M}%
_{k}(\mathbb{C})^{n}$ to be the subset of $\mathcal{M}_{k}(\mathbb{C})^{n}$
consisting of all $(A_{1},\ldots,A_{n})$ in $M_{k}(\mathbb{C})^{n}$ such that
there exists some unitary matrix $W$ in $\mathcal{U}(k)$ satisfying%
\[
\left\Vert (A_{1},\ldots,A_{n})-(WB_{1}W^{\ast},\ldots,WB_{n}W^{\ast
}\right\Vert _{2}<\omega.
\]

\begin{definition}
Suppose that $\Sigma$ is a subset of $\mathcal{M}_{k}(\mathbb{C})^{n}.$ We
define $o_{2}(\Sigma,\omega)$ to be the minimal number of $\omega$%
-$orbit$-$\left\Vert \cdot\right\Vert _{2}-$balls that consist a covering of
$\Sigma$ in $\mathcal{M}_{k}(\mathbb{C})^{n}.$
\end{definition}

\subsection{\bigskip Noncommutative Polynomials}

In this article, we always assume that $\mathcal{A}$ is a unital C*-algebra.
Let $x_{1},\cdots,x_{n},y_{1},\cdots,y_{m}$ be self-adjoint elements in
$\mathcal{A}.$ Let $\mathbb{C}\langle X_{1},\cdots,X_{n},Y_{1},\cdots
,Y_{m}\rangle$ be the set of all noncommutative polynomials in the
indeterminates $X_{1},\cdots,X_{n},Y_{1},\cdots,Y_{m}.$ Let $\mathbb{C}%
_{\mathbb{Q}}=\mathbb{Q+}i\mathbb{Q}$ denote the complex-rational numbers,
i.e., the number whose real and imaginary parts are rational. The set
$\mathbb{C}_{\mathbb{Q}}\langle X_{1},\cdots,X_{n},Y_{1},\cdots,Y_{m}\rangle$
of noncommutative polynomials with complex-rational coefficients is countable.
Throughout this paper we write
\[
\mathbb{C}_{\mathbb{Q}}\langle X_{1},\cdots,X_{n},Y_{1},\cdots,Y_{m}%
\rangle=\left\{  P_{r}:r\in\mathbb{N}\right\}  \text{ and }\mathbb{C}%
_{\mathbb{Q}}\langle X_{1},\cdots,X_{n}\rangle=\left\{  Q_{r}:r\in
\mathbb{N}\right\}
\]
and%
\[
\mathbb{C}_{\mathbb{Q}}\mathbb{\langle}X_{1},X_{2}\cdots\mathbb{\rangle=\cup
}_{m=1}^{\infty}\mathbb{C}_{\mathbb{Q}}\mathbb{\langle}X_{1},\cdots
,X_{m}\mathbb{\rangle}.
\]

\begin{remark}
We always assume that $1\in\mathbb{C}\langle X_{1},\cdots,X_{n},Y_{1}%
,\cdots,Y_{m}\rangle.$
\end{remark}

\subsection{Voiculescu's Norm-microstates Space}

For all integers $r,k\geq1,$ real numbers $R,\varepsilon>0$ and noncommutative
polynomials $P_{1},\ldots,P_{r},$ we define
\[
\Gamma_{R}^{\text{(top)}}(x_{1},\ldots,x_{n},y_{1},\ldots,y_{m};k,\varepsilon
,P_{1},\ldots,P_{r}%
\]
to be the subset of $\left(  \mathcal{M}_{k}^{s.a}(\mathbb{C)}\right)  ^{n+m}$
consisting of all these
\[
\left(  A_{1},\ldots,A_{n},B_{1},\ldots,B_{m}\right)  \in\left(
\mathcal{M}_{k}^{s.a}(\mathbb{C)}\right)  ^{n+m}%
\]
satisfying
\[
\max\left\{  \left\Vert A_{1}\right\Vert ,\ldots,\left\Vert A_{n}\right\Vert
,\left\Vert B_{1}\right\Vert ,\ldots\left\Vert B_{m}\right\Vert \right\}  \leq
R
\]
and%
\[
\left\vert \left\Vert P_{j}(A_{1},\ldots,A_{n},B_{1},\ldots,B_{m})\right\Vert
-\left\Vert P_{j}(x_{1},\ldots,x_{n},y_{1},\ldots,y_{m}\right\Vert \right\vert
\leq\varepsilon,\mathcal{8}1\leq j\leq r.
\]

\begin{remark}
\label{junhao}In the original definition of norm-microstates space in
\cite{DV}, the parameter $R$ was not introduced. Note the following
observation: Let
\[
R>\max\left\{  \left\Vert x_{1}\right\Vert ,\left\Vert x_{2}\right\Vert
,\ldots,\left\Vert x_{n}\right\Vert ,\left\Vert y_{1}\right\Vert
,\ldots,\left\Vert y_{m}\right\Vert \right\}  .
\]
When $r$ is large enough and $\varepsilon$ is small enough,%
\[
\Gamma_{R}^{\text{(top)}}(x_{1},\ldots,x_{n},y_{1},\ldots,y_{m};k,\varepsilon
,P_{1},\ldots,P_{r})=\Gamma_{\mathcal{1}}^{\text{(top)}}(x_{1},\ldots
,x_{n},y_{1},\ldots,y_{m};k,\varepsilon,P_{1},\ldots,P_{r})
\]
for all $k\geq1.$ Our definition agrees with the one in \cite{DV} for large
$R,r$ and small $\varepsilon.$
\end{remark}

Define the norm-microstates space of $x_{1},\ldots,x_{n}$ in the presence of
$y_{1},\ldots,y_{m},$ denoted by
\[
\Gamma_{R}^{\text{(top)}}(x_{1},\ldots,x_{n}:y_{1},\ldots,y_{m};k,\varepsilon
,P_{1},\ldots,P_{r})
\]
as the projection of $\Gamma_{R}^{\text{(top)}}(x_{1},\ldots,x_{n}%
,y_{1},\ldots,y_{m};k,\varepsilon,P_{1},\ldots,P_{r})$ onto the space
$(M_{k}^{s.a}(\mathbb{C}))^{n}$ via the mapping
\[
(A_{1},\ldots,A_{n},B_{1},\ldots,B_{m})\rightarrow(A_{1},\ldots,A_{n}).
\]

\subsection{Voiculescu's topological free entropy dimension}

Define
\[
\nu_{\mathcal{1}}(\Gamma_{R}^{\text{(top)}}(x_{1},\ldots,x_{n}:y_{1}%
,\ldots,y_{m};k,\varepsilon,P_{1},\ldots,P_{r}),\omega)
\]
to be the covering number of the set $\Gamma_{R}^{\text{(top)}}(x_{1}%
,\ldots,x_{n}:y_{1},\ldots,y_{m};k,\varepsilon,P_{1},\ldots,P_{r}) $ by
$\omega$-$\left\Vert \cdot\right\Vert $-balls in the metric space
$(M_{k}^{s.a}(\mathbb{C))}^{n}$ equipped with operator norm.

\begin{definition}
Define%
\begin{align*}
&  \delta_{\text{top}}(x_{1},\ldots,x_{n};\omega)\\
&  =\underset{R>0}{\sup}\underset{\varepsilon>0,r\in\mathbb{N}}{\inf}%
\underset{k\rightarrow\mathcal{1}}{\lim\sup}\frac{\log(\nu_{\mathcal{1}%
}(\Gamma_{R}^{\text{(top)}}(x_{1},\ldots,x_{n};k,\varepsilon,Q_{1}%
,\ldots,Q_{r}),\omega))}{-k^{2}\log\omega}.
\end{align*}

\textbf{The topological free entropy dimension }of $x_{1},\ldots,x_{n}$ is
defined by
\[
\delta_{top}(x_{1},\ldots,x_{n})=\underset{\omega\rightarrow0^{+}}{\lim\sup
}\delta_{top}(x_{1},\ldots,x_{n};\omega).
\]
Similarly, define%
\begin{align*}
\delta_{\text{top}}(x_{1},\ldots,x_{n}  &  :y_{1},\ldots,y_{m};\omega)\\
&  =\underset{R>0}{\sup}\underset{\varepsilon>0,r\in\mathbb{N}}{\inf}%
\underset{k\rightarrow\mathcal{1}}{\lim\sup}\frac{\log(\nu_{\mathcal{1}%
}(\Gamma_{R}^{\text{(top)}}(x_{1},\ldots,x_{n}:y_{1},\ldots,y_{m}%
;k,\varepsilon,P_{1},\ldots,P_{r}),\omega))}{-k^{2}\log\omega}.
\end{align*}
The topological free entropy dimension of $x_{1},\cdots,x_{n}$ in the presence
of $y_{1},\cdots,y_{m}$ is defined by
\[
\delta_{top}(x_{1},\ldots,x_{n}:y_{1},\cdots,y_{m})=\underset{\omega
\rightarrow0^{+}}{\lim\sup}\delta_{top}(x_{1},\ldots,x_{n}:y_{1},\cdots
,y_{m};\omega).
\]

\end{definition}

\begin{remark}
Let $R>\max\left\{  \left\Vert x_{1}\right\Vert ,\ldots,\left\Vert
x_{n}\right\Vert ,\left\Vert y_{1}\right\Vert ,\ldots,\left\Vert
y_{m}\right\Vert \right\}  $ be some positive number. By Remark \ref{junhao},
we know the supremum over $R>0$ is unnecessary, i.e.,
\begin{align*}
\delta_{top}(x_{1},\ldots,x_{n}  &  :y_{1},\ldots,y_{m})\\
&  =\underset{\omega\rightarrow0^{+}}{\lim\sup}\underset{\varepsilon
>0,r\in\mathbb{N}}{\inf}\underset{k\rightarrow\mathcal{1}}{\lim\sup}\frac
{\log(\nu_{\mathcal{1}}(\Gamma_{R}^{\text{(top)}}(x_{1},\ldots,x_{n}%
:y_{1},\ldots,y_{m};k,\varepsilon,P_{1},\ldots,P_{r}),\omega))}{-k^{2}%
\log\omega}%
\end{align*}

\end{remark}

\subsection{$\delta_{\text{top}}^{1/2}$}

In this subsection we recall the definition of $\delta_{\text{top}}^{1/2}$ and
its properties.

\begin{definition}
(\cite{DV}) The norm-semi-microstates $\Gamma_{1/2}^{\text{top}}\left(
x_{1},\ldots,x_{n};k,\varepsilon,Q_{1},\ldots,Q_{r}\right)  $ is the set of
all $\left(  a_{1},\ldots,a_{n}\right)  \in\mathcal{M}_{k}^{n}\left(
\mathbb{C}\right)  $ such that%
\[
\left\Vert Q_{j}\left(  a_{1},\ldots,a_{n}\right)  \right\Vert \leq\left\Vert
Q_{j}\left(  x_{1},\ldots,x_{n}\right)  \right\Vert +\varepsilon
\]
for $1\leq j\leq r$.

We define $\delta_{\text{top}}^{1/2}\left(  x_{1},\ldots,x_{n}\right)  $ to
be
\[
\limsup\limits_{\omega\rightarrow0^{+}}\inf\limits_{r\in\mathbb{N}%
,\varepsilon>0}\limsup\limits_{k\rightarrow\infty}\frac{\log\left(
\nu_{\mathcal{1}}\left(  \Gamma_{1/2}^{\text{top}}\left(  \left(  x_{1}%
,\ldots,x_{n};k,\varepsilon,Q_{1},\ldots,Q_{r}\right)  \right)  ,\omega
\right)  \right)  }{-k^{2}\log\omega}%
\]

\end{definition}

\begin{theorem}
(\cite{HLSW}) $\delta_{\text{top}}^{1/2}\left(  x_{1},\ldots,x_{n}\right)
=\delta_{\text{top}}\left(  x_{1},\ldots,x_{n}\right)  $ whenever
$\delta_{\text{top}}\left(  x_{1},\ldots,x_{n}\right)  $ is defined.
\end{theorem}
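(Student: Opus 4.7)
The plan is to establish the equality by proving the two inequalities $\delta_{\text{top}} \le \delta_{\text{top}}^{1/2}$ and $\delta_{\text{top}}^{1/2} \le \delta_{\text{top}}$ separately. The first direction is essentially immediate: the two-sided bound $\bigl|\,\|Q_j(\vec{A})\| - \|Q_j(\vec{x})\|\,\bigr| \le \varepsilon$ in the microstate definition implies the one-sided bound $\|Q_j(\vec{A})\| \le \|Q_j(\vec{x})\| + \varepsilon$, so we obtain the set inclusion $\Gamma_R^{\text{(top)}}(\vec{x}; k, \varepsilon, Q_1, \ldots, Q_r) \subseteq \Gamma_{1/2}^{\text{top}}(\vec{x}; k, \varepsilon, Q_1, \ldots, Q_r)$. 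A standard covering-number estimate (replace each ball center in a cover of $\Gamma_{1/2}^{\text{top}}$ by a nearby point of $\Gamma_R^{\text{(top)}}$, if the ball meets it, and double the radius) then yields $\nu_\infty(\Gamma_R^{\text{(top)}}, 2\omega) \le \nu_\infty(\Gamma_{1/2}^{\text{top}}, \omega)$. Dividing by $-k^2 \log\omega$, taking $\limsup_k$, and letting $\omega \to 0^+$ (using $\log\omega/\log(2\omega) \to 1$) gives $\delta_{\text{top}} \le \delta_{\text{top}}^{1/2}$.

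For the harder reverse inequality, I would use a direct-sum trick with a fixed auxiliary microstate. Since $\delta_{\text{top}}(\vec{x})$ is assumed to be defined, $C^*(\vec{x})$ is MF, so for any prescribed polynomials $P_1, \ldots, P_r$ and any $\varepsilon > 0$ we can fix a microstate $\vec{B}^{(k_0)} \in \Gamma_R^{\text{(top)}}(\vec{x}; k_0, \varepsilon/2, P_1, \ldots, P_r)$ at some fixed size $k_0$. Given any semi-microstate $\vec{a} \in \Gamma_{1/2}^{\text{top}}(\vec{x}; k, \varepsilon', Q_1, \ldots, Q_{r'})$, the first step is to pass to its self-adjoint part $\vec{A} = (\vec{a} + \vec{a}^*)/2$; by including the $*$-polynomials $X_i - X_i^*$ (which vanish on the self-adjoint $\vec{x}$) among the $Q_j$, the semi-microstate condition with small $\varepsilon'$ forces $\|a_i - a_i^*\| \le \varepsilon'$, hence $\|\vec{A} - \vec{a}\| \le \varepsilon'/2$. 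Then form the block-diagonal sum $\vec{A}' = \vec{A} \oplus \vec{B}^{(k_0)} \in (\mathcal{M}_{k+k_0}^{s.a}(\mathbb{C}))^n$. Because $\|P_j(\vec{A}')\| = \max(\|P_j(\vec{A})\|,\, \|P_j(\vec{B}^{(k_0)})\|)$, the upper bound from the semi-microstate condition on $\vec{A}$ (up to an $O(\varepsilon')$ polynomial perturbation) combined with the lower bound from the microstate condition on $\vec{B}^{(k_0)}$ produces the full two-sided microstate estimate, so $\vec{A}' \in \Gamma_R^{\text{(top)}}(\vec{x}; k+k_0, \varepsilon, P_1, \ldots, P_r)$.

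The assignment $\vec{a} \mapsto \vec{A}'$ is $\|\cdot\|$-Lipschitz with constant essentially $1$ (being the composition of self-adjointification with an isometric block-diagonal embedding), so any covering of $\Gamma_R^{\text{(top)}}(\vec{x}; k+k_0, \varepsilon, P_1, \ldots, P_r)$ by $\omega$-balls pulls back to a covering of $\Gamma_{1/2}^{\text{top}}(\vec{x}; k, \varepsilon', Q_1, \ldots, Q_{r'})$ by balls of radius $\omega + O(\varepsilon')$. Because $k_0$ is fixed, $(k+k_0)^2/k^2 \to 1$ as $k \to \infty$, so taking logarithms, dividing by $-k^2 \log\omega$, and passing to $\limsup_k$ followed by $\omega \to 0^+$ and $\varepsilon' \to 0^+$ gives $\delta_{\text{top}}^{1/2}(\vec{x}) \le \delta_{\text{top}}(\vec{x})$. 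The main technical obstacle is the careful bookkeeping of the hierarchy of parameters $r, \varepsilon, r', \varepsilon', \omega, k_0$: the polynomial list and tolerance on the semi-microstate side must be chosen rich enough both to enforce approximate self-adjointness of $\vec{a}$ and to guarantee that the augmented tuple $\vec{A}'$ satisfies the prescribed microstate conditions with $P_1, \ldots, P_r$ and $\varepsilon$, while simultaneously keeping the radius loss $O(\varepsilon')$ small compared to $\omega$.
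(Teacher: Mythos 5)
The paper does not prove this statement itself; it quotes it from \cite{HLSW}, so there is no in-text proof to compare with. Your two-inequality scheme is the natural (and, as far as one can tell, the same) argument: the inclusion $\Gamma_R^{\text{(top)}}\subseteq\Gamma_{1/2}^{\text{top}}$ with the standard center-adjustment gives $\delta_{\text{top}}\le\delta_{\text{top}}^{1/2}$, and for the converse the key points are exactly the ones you use --- that $\delta_{\text{top}}$ being defined means $C^{\ast}(x_{1},\ldots,x_{n})$ is MF, so for the fixed data $(R,\varepsilon,Q_{1},\ldots,Q_{r})$ a genuine microstate $\vec{B}^{(k_{0})}$ exists at some fixed size $k_{0}$; that $\|Q_{j}(\vec{A}\oplus\vec{B}^{(k_{0})})\|=\max\bigl(\|Q_{j}(\vec{A})\|,\|Q_{j}(\vec{B}^{(k_{0})})\|\bigr)$ upgrades the one-sided estimate to the two-sided one; that $\vec{A}\mapsto\vec{A}\oplus\vec{B}^{(k_{0})}$ is isometric on differences, so coverings of $\Gamma_R^{\text{(top)}}$ at size $k+k_{0}$ pull back to coverings of the semi-microstate set at size $k$ (the pulled-back radius is $2\omega+O(\varepsilon')$ rather than $\omega+O(\varepsilon')$, because centers must lie in the set, but this washes out as $\omega\to0^{+}$); and that $(k+k_{0})^{2}/k^{2}\to1$. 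The quantifier bookkeeping you describe (choose $r'\ge r$ large enough to contain the monomials $X_{i}$, so the norm cap $R$ is automatic, and $\varepsilon'\le\min(\varepsilon,\omega)$) does close the argument. One caveat: your self-adjointification step hinges on putting $X_{i}-X_{i}^{\ast}$ into the test family, but the family $\mathbb{C}_{\mathbb{Q}}\langle X_{1},\ldots,X_{n}\rangle$ used here treats the indeterminates as formally self-adjoint (the generators are self-adjoint), so such polynomials are not available; adjoint-free one-sided norm bounds do not obviously force a matrix tuple to be uniformly (in $k$) close to a self-adjoint one, and proving that would be real extra work. The intended reading of $\Gamma_{1/2}^{\text{top}}$ (as in \cite{DV} and \cite{HLSW}) is that the semi-microstates are already self-adjoint tuples, in which case this step should simply be deleted and the rest of your proof stands.
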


\subsection{Topological orbit dimension $\mathfrak{K}_{top}^{(2)}$ and
Modified free entropy dimension $\mathfrak{K}_{2}^{(2)}$}

In this subsection, we are going to recall a C*-algebra invariant "topological
orbit dimension $\mathfrak{K}_{top}^{(2)}$" and its basic properties.

\begin{definition}
(\cite{HLS}) Define%
\[
\mathfrak{K}_{top}^{(2)}(x_{1},\cdots,x_{n};\omega)=\sup_{R>0}\inf
_{\varepsilon>0,r\in\mathbb{N}}\lim\sup_{k\rightarrow\mathcal{1}}\frac
{\log(o_{2}(\Gamma_{R}^{(top)}(x_{1},\cdots,x_{n};k,\varepsilon,Q_{1}%
,\cdots,Q_{r}),\omega)}{k^{2}}%
\]
and%
\[
\mathfrak{K}_{top}^{(2)}(x_{1},\cdots,x_{n})=\sup_{\omega>0}\mathfrak{K}%
_{top}^{(2)}(x_{1},\cdots,x_{n};\omega)=\lim_{\omega\rightarrow0^{+}%
}\mathfrak{K}_{top}^{(2)}(x_{1},\cdots,x_{n};\omega).
\]
Similarly, define%
\begin{align*}
\mathfrak{K}_{top}^{(2)}(x_{1},\cdots,x_{n}  &  :y_{1},\cdots,y_{m};\omega)=\\
&  \sup_{R>0}\inf_{\varepsilon>0,r\in\mathbb{N}}\lim\sup_{k\rightarrow
\mathcal{1}}\frac{\log(o_{2}(\Gamma_{R}^{(top)}(x_{1},\cdots,x_{n}%
:y_{1},\cdots,y_{m};k,\varepsilon,P_{1},\cdots,P_{r}),\omega)}{k^{2}}%
\end{align*}
and
\[
\mathfrak{K}_{top}^{(2)}(x_{1},\cdots,x_{n}:y_{1},\cdots,y_{m})=\sup
_{\omega>0}\mathfrak{K}_{top}^{(2)}(x_{1},\cdots,x_{n};\omega)=\lim
_{\omega\rightarrow0^{+}}\mathfrak{K}_{top}^{(2)}(x_{1},\cdots,x_{n}%
:y_{1},\cdots,y_{m};\omega)
\]

\end{definition}

The topological orbit dimension $\mathfrak{K}_{top}^{(2)}$ is in fact a
C*-algebra invariant. In view of this result, we use $\mathfrak{K}_{top}%
^{(2)}\left(  \mathcal{A}\right)  $ to denote $\mathfrak{K}_{top}^{(2)}\left(
x_{1},\cdots,x_{n}\right)  $ for an arbitrary generating set $\left\{
x_{1},\cdots,x_{n}\right\}  $ for $\mathcal{A}$.

\begin{theorem}
(\cite{HLS}) \label{in}Suppose that $\mathcal{A}$ is a unital C*-algebra and
$\left\{  x_{1},\cdots,x_{n}\right\}  ,\left\{  y_{1},\cdots,y_{p}\right\}  $
are two families of self-adjoint generators of $\mathcal{A}$. Then
\[
\mathfrak{K}_{top}^{(2)}(x_{1},\cdots,x_{n})=\mathfrak{K}_{top}^{(2)}%
(y_{1},\cdots,y_{p})
\]

\end{theorem}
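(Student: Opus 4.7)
The plan is to reduce the theorem to showing that adjoining finitely many elements of $C^\ast(\vec{x})$ to a generating family cannot change $\mathfrak{K}_{top}^{(2)}$; applying this twice (swapping the roles of $\vec{x}$ and $\vec{y}$) gives $\mathfrak{K}_{top}^{(2)}(\vec{x}) = \mathfrak{K}_{top}^{(2)}(\vec{x},\vec{y}) = \mathfrak{K}_{top}^{(2)}(\vec{y})$. Concretely, I would prove
\[
\mathfrak{K}_{top}^{(2)}(x_1,\ldots,x_n)=\mathfrak{K}_{top}^{(2)}(x_1,\ldots,x_n,y_1,\ldots,y_p)
\]
whenever $y_1,\ldots,y_p\in C^\ast(x_1,\ldots,x_n)$ are self-adjoint. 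The direction ``$\le$'' is straightforward: the coordinate projection $(A,B)\mapsto A$ sends the joint microstate space into the $\vec{x}$-microstate space (after the usual adjustment of $\varepsilon$ and $r$) and maps an orbit-$\|\cdot\|_2$-ball of radius $\omega$ in $(\mathcal{M}_k(\mathbb{C}))^{n+p}$ to an orbit-$\|\cdot\|_2$-ball of radius at most $\omega$ in $(\mathcal{M}_k(\mathbb{C}))^{n}$, since the same unitary $W$ witnesses both.

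The reverse inequality is the substantive one and I would attack it via polynomial approximation. Since each $y_j\in C^\ast(\vec{x})$, for every $\eta>0$ there exist self-adjoint polynomials $S_1^{(\eta)},\ldots,S_p^{(\eta)}$ in $X_1,\ldots,X_n$ with $\|y_j-S_j^{(\eta)}(\vec{x})\|<\eta$. Given an orbit-$\|\cdot\|_2$-cover of $\Gamma_R^{(top)}(\vec{x};k,\varepsilon,Q_1,\ldots,Q_r)$ by $N$ balls with centers $(A_1^{(l)},\ldots,A_n^{(l)})$ and radius $\omega'$, I would use as candidate centers for the joint microstate space the tuples
\[
\bigl(A_1^{(l)},\ldots,A_n^{(l)},\,S_1^{(\eta)}(A^{(l)}),\ldots,S_p^{(\eta)}(A^{(l)})\bigr),\qquad l=1,\ldots,N.
\]
For any $(A,B)$ in the joint microstate space there is a unitary $W$ and index $l$ with $\|WA_iW^\ast-A_i^{(l)}\|_2<\omega'$ for every $i$. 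By including the polynomials $Y_j-S_j^{(\eta)}(\vec{X})$ among the joint test polynomials and choosing $\varepsilon$ small, the microstate condition forces $\|B_j-S_j^{(\eta)}(A)\|<2\eta$ in operator norm, whence the same $W$ satisfies
\[
\|WB_jW^\ast-S_j^{(\eta)}(A^{(l)})\|_2 \le \|B_j-S_j^{(\eta)}(A)\|_2+\|S_j^{(\eta)}(WAW^\ast)-S_j^{(\eta)}(A^{(l)})\|_2,
\]
and both summands are small --- the first by the choice of $\eta$, the second by the standard $\|\cdot\|_2$-Lipschitz estimate for polynomials on the operator ball of radius $R$.

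The main obstacle will be the bookkeeping of parameters, typical of free-entropy-dimension invariance proofs. Given a target radius $\omega$ for the joint cover and a cutoff $R$, I must first fix $\eta$ and the polynomials $S_j^{(\eta)}$, then compute their Lipschitz constants $L_j$ on the $R$-ball with respect to $\|\cdot\|_2$, then pick $\omega'$ of order $\omega/(C\max_j L_j)$ for a suitable geometric constant $C$, and finally select $\varepsilon'$ together with the joint test polynomials $P_1,\ldots,P_{r'}$ so that $\|B_j-S_j^{(\eta)}(A)\|_2<\omega/(2\sqrt{p})$ on microstates. Crucially, the $L_j$ are independent of $k$, so the same count $N$ serves both sides, yielding
\[
o_2(\Gamma_R^{(top)}(\vec{x},\vec{y};k,\varepsilon',P_1,\ldots,P_{r'}),\omega) \le o_2(\Gamma_R^{(top)}(\vec{x};k,\varepsilon,Q_1,\ldots,Q_r),\omega').
\]
Dividing by $k^2$, passing to $\limsup_k$, $\inf_{\varepsilon,r}$, and $\sup_R$ in the correct order, and finally sending $\omega\to 0^+$ (so that $\omega'\to 0^+$ too), produces $\mathfrak{K}_{top}^{(2)}(\vec{x},\vec{y})\le\mathfrak{K}_{top}^{(2)}(\vec{x})$, completing the invariance.
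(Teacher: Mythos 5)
Your reduction (prove $\mathfrak{K}_{top}^{(2)}(\vec{x})=\mathfrak{K}_{top}^{(2)}(\vec{x},\vec{y})$ whenever $y_{1},\ldots,y_{p}\in C^{\ast}(\vec{x})$ are self-adjoint, then apply it twice) is sound, and your treatment of the substantive inequality $\mathfrak{K}_{top}^{(2)}(\vec{x},\vec{y})\le\mathfrak{K}_{top}^{(2)}(\vec{x})$ is essentially the standard polynomial-approximation argument used in the cited source: self-adjoint polynomial approximants $S_{j}^{(\eta)}$, inclusion of $Y_{j}-S_{j}^{(\eta)}(\vec{X})$ among the test polynomials, the $\|\cdot\|_{2}$-Lipschitz estimate for polynomials on the operator $R$-ball (legitimate here because the paper's covering convention puts the centers $A^{(l)}$ inside $\Gamma_{R}^{(top)}(\vec{x};k,\varepsilon,Q_{1},\ldots,Q_{r})$, hence inside the $R$-ball), and the fact that a single unitary conjugates all coordinates. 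The order of choices ($\eta$ first, then the $S_{j}^{(\eta)}$ and their Lipschitz constants, then $\omega'$, then $\varepsilon'$ and the joint test polynomials, all independently of $k$) is correct, up to routine trivia (rational coefficients, doubling radii so that centers lie in the covered set, which is harmless as $\omega\to0^{+}$).

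The gap is in the direction you call straightforward, $\mathfrak{K}_{top}^{(2)}(\vec{x})\le\mathfrak{K}_{top}^{(2)}(\vec{x},\vec{y})$. Projecting an orbit-$\|\cdot\|_{2}$-cover of the joint microstate space does yield orbit balls of no larger radius, but those balls cover only the \emph{projection} of $\Gamma_{R}^{(top)}(\vec{x},\vec{y};k,\varepsilon',P_{1},\ldots,P_{r'})$, and the containment you invoke (projection contained in the $\vec{x}$-microstate space) points the wrong way: by itself it gives no bound on $o_{2}(\Gamma_{R}^{(top)}(\vec{x};k,\varepsilon,Q_{1},\ldots,Q_{r}),\omega)$. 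What is missing is the approximate surjectivity of the projection, which is exactly where $y_{j}\in C^{\ast}(\vec{x})$ must be used a second time: given the joint data $(\varepsilon',P_{1},\ldots,P_{r'})$, choose $\eta$ (note that here $\eta$ must be chosen \emph{after} $\varepsilon'$ and the $P_{i}$, the reverse of the quantifier order in your other direction), a cutoff $R'$ with $\|S_{j}^{(\eta)}(A)\|\le R'$, and then $(\varepsilon,r)$ so large that every $A\in\Gamma_{R}^{(top)}(\vec{x};k,\varepsilon,Q_{1},\ldots,Q_{r})$ produces a joint microstate $(A,S_{1}^{(\eta)}(A),\ldots,S_{p}^{(\eta)}(A))\in\Gamma_{R'}^{(top)}(\vec{x},\vec{y};k,\varepsilon',P_{1},\ldots,P_{r'})$, using $\|P_{i}(A,S^{(\eta)}(A))\|\approx\|P_{i}(\vec{x},\vec{y})\|$. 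Only then does a cover of the joint space pull back, via the projected centers, to a cover of $\Gamma_{R}^{(top)}(\vec{x};k,\varepsilon,Q_{1},\ldots,Q_{r})$, and the inequality between the two $\mathfrak{K}_{top}^{(2)}$ quantities follows. This repair uses no new ideas beyond those already in your substantive direction, but as written the easy half, and hence the equality $\mathfrak{K}_{top}^{(2)}(\vec{x})=\mathfrak{K}_{top}^{(2)}(\vec{x},\vec{y})$, is not yet proved; alternatively one can bypass the joint tuple altogether and prove the two one-sided inequalities $\mathfrak{K}_{top}^{(2)}(\vec{y})\le\mathfrak{K}_{top}^{(2)}(\vec{x})$ and $\mathfrak{K}_{top}^{(2)}(\vec{x})\le\mathfrak{K}_{top}^{(2)}(\vec{y})$ directly by mapping microstates through the approximating polynomials in both directions.
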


After slightly modify the proof of Theorem \ref{in}, we can conclude that

\begin{theorem}
\label{ind}Suppose that $\mathcal{A}$ is a unital C*-algebra and $x_{1}%
,\cdots,x_{n},y_{1},\cdots,y_{p},w_{1},\cdots,w_{t}$ are self-adjoint elements
in $\mathcal{A}$. If $C^{\ast}\left(  x_{1},\cdots,x_{n}\right)  =C^{\ast
}\left(  y_{1},\cdots,y_{p}\right)  ,$ then
\[
\mathfrak{K}_{top}^{(2)}(x_{1},\cdots,x_{n}:w_{1},\cdots,w_{t})=\mathfrak{K}%
_{top}^{(2)}(y_{1},\cdots,y_{p}:w_{1},\cdots,w_{t})
\]

\end{theorem}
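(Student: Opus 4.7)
The plan is to mimic the proof of Theorem \ref{in} while carrying the presence variables $w_1,\ldots,w_t$ as passive passengers throughout. From the hypothesis $C^*(x_1,\ldots,x_n) = C^*(y_1,\ldots,y_p)$, for any $\eta > 0$ I can choose noncommutative polynomials $P_1,\ldots,P_p \in \mathbb{C}_{\mathbb{Q}}\langle X_1,\ldots,X_n\rangle$ and $Q_1,\ldots,Q_n \in \mathbb{C}_{\mathbb{Q}}\langle Y_1,\ldots,Y_p\rangle$ with $\|P_j(x_1,\ldots,x_n) - y_j\| < \eta$ and $\|Q_i(y_1,\ldots,y_p) - x_i\| < \eta$ for all $i,j$; the presence variables are untouched. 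On any operator-norm ball of radius $R$, each such polynomial is Lipschitz with respect to $\|\cdot\|_2$ with constant $L$ depending only on $R$ and the polynomial (in particular independent of $k$); moreover $W P(B_1,\ldots,B_p)W^* = P(WB_1W^*,\ldots,WB_pW^*)$ for every unitary $W$.

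First I would show that polynomial substitution sends microstates to microstates. Given $\varepsilon' > 0$, a finite list of test polynomials in $Y,W$, and $R' > \max_j \|y_j\|$, continuity of noncommutative polynomials on norm-bounded sets lets me choose $\eta,\varepsilon,R,r$ and a finite list of test polynomials in $X,W$ such that for every tuple $(A_1,\ldots,A_n,C_1,\ldots,C_t) \in \Gamma_R^{(top)}(x_1,\ldots,x_n,w_1,\ldots,w_t;k,\varepsilon,\ldots)$, the tuple
\[
\bigl(P_1(\vec A),\ldots,P_p(\vec A),C_1,\ldots,C_t\bigr)
\]
lies in $\Gamma_{R'}^{(top)}(y_1,\ldots,y_p,w_1,\ldots,w_t;k,\varepsilon',\ldots)$. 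Projecting onto the first $p$ coordinates, this sends $\Gamma_R^{(top)}(x_1,\ldots,x_n:w_1,\ldots,w_t)$ into $\Gamma_{R'}^{(top)}(y_1,\ldots,y_p:w_1,\ldots,w_t)$.

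Next I would transfer coverings from the $y$-side to the $x$-side. Let $(B_1^{(s)},\ldots,B_p^{(s)})$, $s=1,\ldots,N$, be centers of a minimal $\omega'$-orbit-$\|\cdot\|_2$-covering of $\Gamma_{R'}^{(top)}(y_1,\ldots,y_p:w_1,\ldots,w_t;k,\varepsilon',\ldots)$. I claim that the tuples $\bigl(Q_1(\vec B^{(s)}),\ldots,Q_n(\vec B^{(s)})\bigr)$, $s=1,\ldots,N$, form a $\delta$-orbit-$\|\cdot\|_2$-covering of $\Gamma_R^{(top)}(x_1,\ldots,x_n:w_1,\ldots,w_t;k,\varepsilon,\ldots)$ for a suitable $\delta = O(\eta + \varepsilon + L_Q\omega')$. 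Indeed, given $(A_1,\ldots,A_n)$ in the $x$-microstate space, apply the substitution to land in the $y$-microstate space, pick $s$ and a unitary $W$ approximating it within $\omega'$, apply $Q_i$ using Lipschitzness and equivariance to bound $\|Q_i(P_1(\vec A),\ldots,P_p(\vec A)) - W Q_i(\vec B^{(s)}) W^*\|_2 \leq L_Q\omega'$, and finally compare with $A_i$ using $\|Q_i(P_1(\vec A),\ldots,P_p(\vec A)) - A_i\|_2 \leq \|Q_i(P_1(\vec A),\ldots,P_p(\vec A)) - A_i\| = O(\eta) + \varepsilon$, since this is the operator norm of a fixed polynomial evaluated on a sufficiently good microstate and $\|Q_i(P_1(\vec x),\ldots,P_p(\vec x)) - x_i\| = O(\eta)$ by choice of polynomials.

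This produces the key inequality
\[
o_2\bigl(\Gamma_R^{(top)}(x_1,\ldots,x_n:w_1,\ldots,w_t;k,\varepsilon,\ldots),\delta\bigr) \leq o_2\bigl(\Gamma_{R'}^{(top)}(y_1,\ldots,y_p:w_1,\ldots,w_t;k,\varepsilon',\ldots),\omega'\bigr).
\]
Taking logarithms, dividing by $k^2$, passing to $\limsup_k$, $\inf_{\varepsilon,r}$, $\sup_R$, and finally $\omega',\eta,\varepsilon\to 0^+$ yields $\mathfrak{K}_{top}^{(2)}(x_1,\ldots,x_n:w_1,\ldots,w_t)\leq \mathfrak{K}_{top}^{(2)}(y_1,\ldots,y_p:w_1,\ldots,w_t)$. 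The reverse inequality follows by swapping the roles of $x$ and $y$, and of $P$ and $Q$. The main obstacle is careful bookkeeping of the quantifiers: one must first fix the $y$-side parameters $\varepsilon',r',R'$ and then construct matching $x$-side parameters $\varepsilon,r,R,\eta$, while verifying that all Lipschitz constants remain independent of $k$ so that the exponential rate $\log(\cdot)/k^2$ is preserved in the limit.
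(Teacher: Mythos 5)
Your proposal is correct and takes essentially the same route as the paper, which itself gives no new argument for Theorem \ref{ind} but appeals to a slight modification of the generator-invariance proof of Theorem \ref{in} from \cite{HLS}: transfer between the two generating families via approximating polynomials, $k$-independent $\left\Vert \cdot\right\Vert _{2}$-Lipschitz bounds on norm-bounded sets, unitary equivariance of polynomial evaluation, and transport of orbit coverings, with the presence variables $w_{1},\cdots,w_{t}$ carried along untouched. The details you leave implicit (taking the approximating polynomials formally self-adjoint, re-centering the transferred balls inside the microstate set at the cost of doubling the radius, and choosing the $Q_{i}$'s before the $P_{j}$'s so their Lipschitz constants do not interfere with the $O(\eta)$ estimate) are routine and do not affect the argument.
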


\begin{remark}
\label{indd}From the definition, it is clear that

\begin{enumerate}
\item $\mathfrak{K}_{top}^{(2)}(x_{1},\cdots,x_{n}:y_{1},\cdots,y_{p}%
)\geq\mathfrak{K}_{top}^{(2)}(x_{1},\cdots,x_{n}:y_{1},\cdots y_{p},y_{p+1});$

\item If $\mathfrak{K}_{top}^{(2)}(x_{1},\cdots,x_{n}:x_{1},\cdots,x_{n+j})=0$
$(j\geq0),then$%
\[
\mathfrak{K}_{top}^{(2)}(x_{1},\cdots,x_{n-1}:x_{1},\cdots,x_{n+j})=0
\]

\end{enumerate}
\end{remark}

Let $\mathcal{M}$ be a von Neumann algebra with a tracial state $\tau,$ and
let $x_{1},\cdots,x_{n}$ be self-adjoint elements in $\mathcal{M}$. For any
positive $R$ and $\varepsilon,$ and any $m,k\in\mathbb{N}$, let $\Gamma
_{R}\left(  x_{1},\cdots,x_{n};m,k,\varepsilon;\tau\right)  $ be the subset of
$\mathcal{M}_{k}^{s.a.}\left(  \mathbb{C}\right)  ^{n}$ consisting of all
$\left(  A_{1},\cdots,A_{n}\right)  $ in $\mathcal{M}_{k}^{s.a.}\left(
\mathbb{C}\right)  ^{n}$ such that
\[
\max_{1\leq j\leq n}\left\Vert A_{j}\right\Vert \leq R\text{ and }\left\vert
\tau_{k}\left(  A_{i_{1}}\cdots A_{i_{q}}\right)  -\tau\left(  x_{i_{1}}\cdots
x_{i_{q}}\right)  \right\vert <\varepsilon,
\]
for all $1\leq i_{1},\cdots,i_{q}\leq n$ and $1\leq q\leq m.$

For any $\omega>0,$ let $o_{2}\left(  \Gamma_{R}\left(  x_{1},\cdots
,x_{n};m,k,\varepsilon;\tau\right)  ,\omega\right)  $ be the minimal number of
$\omega$-orbit-$\left\Vert \cdot\right\Vert _{2}$-balls in $\mathcal{M}%
_{k}\left(  \mathbb{C}\right)  ^{n}$ that constitute a covering of $\Gamma
_{R}\left(  x_{1},\cdots,x_{n};m,k,\varepsilon;\tau\right)  .$ Now we define,
successively,%
\[
\mathfrak{K}_{2}^{\left(  2\right)  }\left(  x_{1},\cdots,x_{n};\omega
;\tau\right)  =\sup_{R>0}\inf_{m\in\mathbb{N},\varepsilon>0}\lim
\sup_{k\rightarrow\mathcal{1}}\frac{\log\left(  o_{2}\left(  \Gamma_{R}\left(
x_{1},\cdots,x_{n};m,k,\varepsilon;\tau\right)  \right)  \right)  }{k^{2}}%
\]%
\[
\mathfrak{K}_{2}^{\left(  2\right)  }\left(  x_{1},\cdots,x_{n};\tau\right)
=\lim\sup_{\omega\rightarrow0^{+}}\mathfrak{K}_{2}^{\left(  2\right)  }\left(
x_{1},\cdots,x_{n};\omega;\tau\right)
\]
where $\mathfrak{K}_{2}^{\left(  2\right)  }\left(  x_{1},\cdots,x_{n}%
;\tau\right)  $ is called the modified free orbit-dimension of $x_{1}%
,\cdots,x_{n}$ with respect to the tracial state $\tau$ \cite{HLS}.

\begin{remark}
(\cite{HLS}) \label{inddd}Suppose $x_{1},\cdots,x_{n}$ is a family of
self-adjoint elements in a von Neumann algebra with a tracial state $\tau.$
Let $\mathfrak{K}_{2}\left(  x_{1},\cdots,x_{n};\tau\right)  $ be the upper
orbit dimension of $x_{1},\cdots,x_{n}$ defined in Definition 1 of \cite{HS1}.
Then $\mathfrak{K}_{2}^{\left(  2\right)  }\left(  x_{1},\cdots,x_{n}%
;\tau\right)  =0$ if $\mathfrak{K}_{2}\left(  x_{1},\cdots,x_{n};\tau\right)
=0.$
\end{remark}

\subsection{MF-Traces and MF Nuclear Algebras}

We note that the definition of $\delta_{top}\left(  x_{1},\cdots,x_{n}\right)
$ makes sense if and only if, for every $\varepsilon>0$ and every $r,k_{0}%
\in\mathbb{N}$, there is a $k\geq k_{o}$ such that
\[
\Gamma^{(top)}\left(  x_{1},\cdots,x_{n};k,\varepsilon,Q_{1},\cdots
,Q_{r}\right)  \neq\varnothing.
\]
In \cite{HS2}, it has shown that this is equivalent to $C^{\ast}\left(
x_{1},\cdots,x_{n}\right)  $ being an MF C*-algebra in the sense of Blackadar
and Kirchberg \cite{BK1}. A C*-algebra $\mathcal{A}$ is an MF-algebra if
$\mathcal{A}$ can be embedded into $\Pi_{1\leq k<\mathcal{1}}\mathcal{M}%
_{m_{k}}\left(  \mathbb{C}\right)  /\sum_{1\leq k<\mathcal{1}}\mathcal{M}%
_{m_{k}}\left(  \mathbb{C}\right)  $ for some increasing sequence $\left\{
m_{k}\right\}  $ of positive integers. In particular C*$\left(  x_{1}%
,\cdots,x_{n}\right)  $ is an MF-algebra if there is a sequence $\left\{
m_{k}\right\}  $ of positive integers and sequences $\left\{  A_{1k}\right\}
,\cdots,\left\{  A_{nk}\right\}  $ with $A_{1k},\cdots,A_{nk}\in
\mathcal{M}_{m_{k}}\left(  \mathbb{C}\right)  $ such that%
\[
\lim_{k\rightarrow\mathcal{1}}\left\Vert Q\left(  A_{1k},\cdots,A_{nk}\right)
\right\Vert =\left\Vert Q\left(  x_{1},\cdots,x_{n}\right)  \right\Vert
\]
for every *-polynomial $Q\left(  t_{1},\cdots,t_{n}\right)  .$

\begin{definition}
(\cite{HLSW}) Suppose $\mathcal{A=}C^{\ast}\left(  x_{1},\cdots,x_{n}\right)
$ is an MF C*-algebra A tracial state $\tau$ on $\mathcal{A}$ is an MF-trace
if there is sequence $\left\{  m_{k}\right\}  $ of positive integers and
sequences $\left\{  A_{1k}\right\}  ,\cdots,\left\{  A_{nk}\right\}  $ with
$A_{1k},\cdots,A_{nk}\in\mathcal{M}_{m_{k}}\left(  \mathbb{C}\right)  $ such
that, for every $\ast$-polynomial $p,$

\begin{enumerate}
\item $\lim_{k\rightarrow\mathcal{1}}\left\Vert Q\left(  A_{1k},\cdots
,A_{nk}\right)  \right\Vert =\left\Vert Q\left(  x_{1},\cdots,x_{n}\right)
\right\Vert ,$ and

\item $\lim_{k\rightarrow\mathcal{1}}\tau_{m_{k}}\left(  Q\left(
A_{1k},\cdots,A_{nk}\right)  \right)  =\tau\left(  Q\left(  x_{1},\cdots
,x_{n}\right)  \right)  .$
\end{enumerate}
\end{definition}

We let $\mathcal{TS}\left(  \mathcal{A}\right)  $ denote the set of all
tracial states on $\mathcal{A}$ and $\mathcal{T}_{MF}\left(  \mathcal{A}%
\right)  $ denote the set of all MF-traces on $\mathcal{A}$.

\begin{definition}
(\cite{HLSW}) A C*-algebra $\mathcal{A=}C^{\ast}\left(  x_{1},\cdots
,x_{n}\right)  $ is MF-nuclear if $\pi_{\tau}\left(  \mathcal{A}\right)
^{\prime\prime}$ is hyperfinite for every $\tau\in\mathcal{T}_{MF}\left(
\mathcal{A}\right)  $ where $\pi_{\tau}$ is the GNS representation of
$\mathcal{A}$ with respect to $\tau.$
\end{definition}

\section{Relation between $\mathfrak{K}_{top}^{(2)}$ and $\mathfrak{K}%
_{2}^{(2)}$}

\begin{definition}
(\cite{HLS}) Suppose that $\mathcal{A}$ is a unital C*-algebra and
$\mathcal{TS}\left(  \mathcal{A}\right)  $ is the set of all tracial states of
$\mathcal{A}$. Suppose that $x_{1},\cdots,x_{n}$ is a family of self-adjoint
elements in $\mathcal{A}$. Define
\[
\mathfrak{KK}_{2}^{\left(  2\right)  }\left(  x_{1},\cdots,x_{n}\right)
=\sup_{\tau\in\mathcal{T}S\left(  \mathcal{A}\right)  }\mathfrak{K}%
_{2}^{\left(  2\right)  }\left(  x_{1},\cdots,x_{n};\tau\right)
\]

\end{definition}

\begin{theorem}
(\cite{HLS}) \ Suppose that $\mathcal{A}$ is a unital C*-algebra and
$x_{1},\cdots,x_{n}$ is a family of self-adjoint generating elements in
$\mathcal{A}$. Then
\[
\mathfrak{K}_{top}^{(2)}(x_{1},\cdots,x_{n})\leq\mathfrak{KK}_{2}^{\left(
2\right)  }\left(  x_{1},\cdots,x_{n}\right)
\]

\end{theorem}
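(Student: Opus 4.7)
The strategy is to cover the norm-microstate space by trace-microstate spaces for a finite list of tracial states, and then apply the hypothesis one trace at a time. The linking observation is that every norm-microstate $(A_{1},\ldots,A_{n})$ induces a tracial functional $\phi_{k}(Q):=\tau_{k}(Q(A_{1},\ldots,A_{n}))$ on $\mathbb{C}\langle X_{1},\ldots,X_{n}\rangle$, and the norm-microstate inequality $|\|Q_{j}(A_{1},\ldots,A_{n})\|-\|Q_{j}(x_{1},\ldots,x_{n})\||\leq\varepsilon$ together with $|\tau_{k}(T)|\leq\|T\|$ yields $|\phi_{k}(Q_{j})|\leq\|Q_{j}(x_{1},\ldots,x_{n})\|+\varepsilon$. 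Consequently any weak-$\ast$ cluster point of $\{\phi_{k}\}$ extends continuously to a tracial state on $\mathcal{A}$.

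Fix $\omega>0$, set $\kappa=\mathfrak{KK}_{2}^{(2)}(x_{1},\ldots,x_{n})$, and pick $\eta>0$. For each $\tau\in\mathcal{TS}(\mathcal{A})$ the hypothesis $\mathfrak{K}_{2}^{(2)}(x_{1},\ldots,x_{n};\tau)\leq\kappa$ supplies $m_{\tau}\in\mathbb{N}$ and $\varepsilon_{\tau}>0$ with
\[
\limsup_{k\to\infty}\frac{1}{k^{2}}\log o_{2}\left(\Gamma_{R}(x_{1},\ldots,x_{n};m_{\tau},k,\varepsilon_{\tau};\tau),\omega\right)\leq\kappa+\eta.
\]
The weak-$\ast$ neighborhoods
\[
\mathcal{U}_{\tau}=\{\sigma\in\mathcal{TS}(\mathcal{A}):|\sigma(x_{i_{1}}\cdots x_{i_{s}})-\tau(x_{i_{1}}\cdots x_{i_{s}})|<\varepsilon_{\tau}/2 \text{ for } 1\leq i_{1},\ldots,i_{s}\leq n,\; s\leq m_{\tau}\}
\]
form an open cover of the weak-$\ast$ compact set $\mathcal{TS}(\mathcal{A})$, and I would extract a finite subcover indexed by $\tau_{1},\ldots,\tau_{N}$.

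Set $m=\max_{j}m_{\tau_{j}}$. The crucial claim is that there exist $r\in\mathbb{N}$ and $\varepsilon'>0$ such that, for all sufficiently large $k$, every $(A_{1},\ldots,A_{n})$ in $\Gamma_{R}^{(top)}(x_{1},\ldots,x_{n};k,\varepsilon',Q_{1},\ldots,Q_{r})$ lies in $\Gamma_{R}(x_{1},\ldots,x_{n};m_{\tau_{j}},k,\varepsilon_{\tau_{j}};\tau_{j})$ for at least one $j$. Granting this, summing over the $N$ pieces produces
\[
o_{2}\left(\Gamma_{R}^{(top)}(x_{1},\ldots,x_{n};k,\varepsilon',Q_{1},\ldots,Q_{r}),\omega\right)\leq N\cdot e^{(\kappa+\eta)k^{2}}
\]
for all large $k$; taking logarithms, dividing by $k^{2}$, and letting $k\to\infty$, then $\eta\to 0^{+}$, then $\omega\to 0^{+}$ finishes the argument.

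The main obstacle will be justifying this reduction uniformly, and my plan is contradiction together with weak-$\ast$ compactness. If no such $(r,\varepsilon')$ existed, one could pick norm-microstates at parameter levels $(k_{\ell},\varepsilon'_{\ell},r_{\ell})$ with $r_{\ell}\to\infty$ and $\varepsilon'_{\ell}\to 0$ whose induced functionals $\phi_{k_{\ell}}$ remained at weak-$\ast$ distance at least $\min_{j}\varepsilon_{\tau_{j}}/2$ from every $\mathcal{U}_{\tau_{j}}$ on monomials of degree at most $m$. A diagonal extraction against an enumeration of polynomials, powered by the uniform bound $|\phi_{k}(Q)|\leq\|Q(x_{1},\ldots,x_{n})\|+\varepsilon'_{\ell}$, would then produce a cluster point that is a tracial state on $\mathcal{A}$ lying outside $\bigcup_{j}\mathcal{U}_{\tau_{j}}$, contradicting that the $\mathcal{U}_{\tau_{j}}$ cover $\mathcal{TS}(\mathcal{A})$. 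The finiteness of the subcover and the fact that only finitely many moments enter the definitions of the $\mathcal{U}_{\tau_{j}}$ are what make this extraction uniform.
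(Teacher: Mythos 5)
Your proof is correct in substance, but it is organized differently from the paper's. The paper (following \cite{HLS}; the argument is reproduced here in the proof of Theorem \ref{li}) works from the other end: assuming $\mathfrak{K}_{top}^{(2)}(x_{1},\ldots,x_{n})>\alpha$, it views each norm-microstate as inducing a tracial state on the universal free product of copies of $C[-R,R]$, partitions each microstate space $\Gamma(q)$ according to the induced trace into $L(\varepsilon)$ pieces of small diameter in that compact metric trace space, keeps the piece with the largest orbit-covering number (losing only the factor $L(\varepsilon)$, which is subexponential in $k^{2}$), and then extracts by a diagonal argument a single limit trace $\tau$ with $\mathfrak{K}_{2}^{(2)}(x_{1},\ldots,x_{n};\tau)\geq\alpha$. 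You instead prove the inequality directly: a finite weak-$\ast$ subcover of $\mathcal{TS}(\mathcal{A})$ by moment-neighborhoods, plus a cluster-point contradiction showing that for suitable $(r,\varepsilon')$ the norm-microstate space sits inside the union of the $N$ trace-microstate spaces, so the covering numbers add with only the factor $N$. Both proofs rest on the same two facts --- a matrix microstate induces a trace via $\tau_{k}$, and the relevant trace space is weak-$\ast$ compact --- but your decomposition is of the trace space while the paper's is of the microstate space. Yours is arguably cleaner for the statement at hand, where the supremum runs over all of $\mathcal{TS}(\mathcal{A})$; the paper's pigeonhole-plus-diagonal scheme is tailored to exhibit one explicit limiting trace, which is exactly what is needed for the refinement in Theorem \ref{li}, where the supremum is restricted to MF-traces (to recover that with your scheme you would have to add that any cluster point of the functionals $\phi_{k_{\ell}}$ is automatically an MF-trace and cover only that closed subset). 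Two minor points to tidy: from $\limsup_{k}k^{-2}\log o_{2}(\cdot,\omega)\leq\kappa+\eta$ you only get $o_{2}(\cdot,\omega)\leq e^{(\kappa+2\eta)k^{2}}$ for large $k$, and since the paper's covering numbers require centers inside the set being covered, passing from the inclusion into a union to the sum of covering numbers costs a doubling of the radius $\omega$; both are harmless because $\eta\rightarrow0$ and $\omega\rightarrow0^{+}$ at the end.
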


We can generalize the preceding theorem as follows. The proof is similar to
the proof in \cite{HLS}, but for completeness, we give its proof here.

\begin{theorem}
\label{li}Let $\mathcal{A}$ be a unital C*-algebra and $\{x_{1},\cdots
,x_{n},y_{1},\cdots,y_{m}\}$ be a family of self-adjoint generating elements
in $\mathcal{A}$. Then
\[
\mathfrak{K}_{top}^{(2)}(x_{1},\cdots,x_{n}:y_{1},\cdots,y_{m})\leq\sup
_{\tau\in\mathcal{T}_{MF}\left(  \mathcal{A}\right)  }\mathfrak{K}%
_{2}^{\left(  2\right)  }\left(  x_{1},\cdots,x_{n}:y_{1},\cdots,y_{m}%
;\tau\right)
\]
where $\mathcal{T}_{MF}\left(  \mathcal{A}\right)  $ is the set of all
MF-tracial states on $\mathcal{A}$.
\end{theorem}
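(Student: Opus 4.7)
The plan is to adapt the weak-$\ast$ compactness argument used for the absolute case to the relative (``in the presence of'') setting by covering $\mathcal{T}_{MF}(\mathcal{A})$ with finitely many cylindrical neighborhoods extracted from the definition of $\mathfrak{K}_{2}^{(2)}$. Fix $\omega>0$ and $\eta>0$, and set $\alpha=\sup_{\tau\in\mathcal{T}_{MF}(\mathcal{A})}\mathfrak{K}_{2}^{(2)}(x_{1},\ldots,x_{n}:y_{1},\ldots,y_{m};\tau)+\eta$. Choose $R$ larger than the operator norms of all generators. For each $\tau\in\mathcal{T}_{MF}(\mathcal{A})$, the definition of $\mathfrak{K}_{2}^{(2)}$ produces $m_{\tau}\in\mathbb{N}$ and $\varepsilon_{\tau}>0$ with
\[
\limsup_{k\to\infty}\frac{\log o_{2}(\Gamma_{R}(x_{1},\ldots,x_{n}:y_{1},\ldots,y_{m};m_{\tau},k,\varepsilon_{\tau};\tau),\omega)}{k^{2}}<\alpha.
\]

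A standard diagonal argument over the countable enumeration $\{P_{r}\}$ of $\ast$-polynomials shows that $\mathcal{T}_{MF}(\mathcal{A})$ is weak-$\ast$ closed, hence compact, inside $\mathcal{TS}(\mathcal{A})$. The basic open neighborhoods
\[
V_{\tau}=\{\sigma\in\mathcal{TS}(\mathcal{A}):|\sigma(P_{j}(\vec{x},\vec{y}))-\tau(P_{j}(\vec{x},\vec{y}))|<\varepsilon_{\tau}/2,\ 1\leq j\leq m_{\tau}\}
\]
cover $\mathcal{T}_{MF}(\mathcal{A})$; extract a finite subcover $V_{\tau_{1}},\ldots,V_{\tau_{N}}$. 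The key step is to prove that for $r$ large and $\varepsilon$ small (both determined by the $m_{\tau_{i}}$ and $\varepsilon_{\tau_{i}}$), and for all sufficiently large $k$, every norm-microstate $(\vec{A},\vec{B})\in\Gamma_{R}^{(\text{top})}(\vec{x},\vec{y};k,\varepsilon,P_{1},\ldots,P_{r})$ has matrix trace lying in some $V_{\tau_{i}}$. Otherwise one extracts $(\vec{A}^{(s)},\vec{B}^{(s)})\in\Gamma_{R}^{(\text{top})}(\vec{x},\vec{y};k_{s},1/s,P_{1},\ldots,P_{s})$ whose associated matrix-trace states miss every $V_{\tau_{i}}$; by weak-$\ast$ compactness these states have a cluster point $\sigma$, the norm-microstate conditions force $\|P_{j}(\vec{A}^{(s)},\vec{B}^{(s)})\|\to\|P_{j}(\vec{x},\vec{y})\|$ for each fixed $j$, and $\sigma$ descends to a tracial state on $\mathcal{A}$ that is manifestly MF --- contradicting $\sigma\notin\bigcup_{i}V_{\tau_{i}}$.

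Projecting this inclusion onto the first $n$ coordinates yields
\[
\Gamma_{R}^{(\text{top})}(\vec{x}:\vec{y};k,\varepsilon,P_{1},\ldots,P_{r})\subseteq\bigcup_{i=1}^{N}\Gamma_{R}(\vec{x}:\vec{y};m_{\tau_{i}},k,\varepsilon_{\tau_{i}};\tau_{i}),
\]
so the orbit covering number of the left side is at most $N\cdot\max_{i}o_{2}(\Gamma_{R}(\vec{x}:\vec{y};m_{\tau_{i}},k,\varepsilon_{\tau_{i}};\tau_{i}),\omega)$. Taking logarithms, dividing by $k^{2}$, and letting $k\to\infty$, then $\eta\downarrow 0$ and $\omega\downarrow 0$, gives the claimed inequality.

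The main obstacle will be the extraction step: one must confirm that the weak-$\ast$ cluster point of matrix traces truly defines a tracial state on the universal $\mathcal{A}$ (this uses the norm convergence of the norm-microstate conditions to pass from the polynomial algebra to the C$^{\ast}$-completion) and that it is genuinely an MF-trace, which is precisely where the matrix sequence supplied by the extraction argument serves as the witness. Matching the parameters $(\varepsilon,r)$ of the norm-microstates to the finitely many $(m_{\tau_{i}},\varepsilon_{\tau_{i}})$ from the cover is delicate but formal, and the orbit-covering bookkeeping around the projection is standard.
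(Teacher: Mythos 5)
Your proposal is correct in outline, but it takes a genuinely different route from the paper's proof. You argue the upper bound directly: you establish (by a diagonal argument) that $\mathcal{T}_{MF}(\mathcal{A})$ is weak-$\ast$ closed, hence compact, attach to each MF trace near-optimal parameters $(m_{\tau},\varepsilon_{\tau})$ for the fixed $\omega$, extract a finite subcover $V_{\tau_{1}},\ldots,V_{\tau_{N}}$, and then show by a compactness/contradiction argument that for suitable $(r,\varepsilon)$ and all large $k$ every topological microstate has matricial trace in some $V_{\tau_{i}}$, so that $\Gamma_{R}^{(\mathrm{top})}$ is covered by finitely many von Neumann microstate spaces and the orbit covering numbers simply add. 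The paper instead works in the contrapositive: assuming $\mathfrak{K}_{top}^{(2)}>\alpha$, it encodes microstates as tracial states on the universal free product $\mathcal{A}(n+m)$ of copies of $C[-R,R]$, partitions the compact set $K_{q}$ of such traces into $L(\varepsilon)$ pieces of small diameter, pigeonholes to select a piece whose projection still carries a $1/L(\varepsilon)$ fraction of the covering number, and diagonalizes over $\varepsilon=1/s$ to extract a single weak-$\ast$ limit trace $\tau$, which it then checks is MF and satisfies $\mathfrak{K}_{2}^{(2)}(\cdot\,;\tau)\geq\alpha^{\prime}$. Both arguments rest on the same core fact, namely that a weak-$\ast$ cluster point of matricial traces along norm-microstates is dominated by the norm of $\mathcal{A}$, hence factors through $\mathcal{A}$ and is an MF trace; but your route additionally needs the compactness of $\mathcal{T}_{MF}(\mathcal{A})$ and the localization claim for \emph{all} microstates, neither of which the paper requires (it only verifies MF-ness of the one trace it constructs), while in return your version yields the inequality directly with an explicit finite decomposition of the microstate space, rather than exhibiting a near-maximizing trace. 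Two points to tighten when writing it up: define $V_{\tau}$ by the moments of the monomials in the generators of degree at most $m_{\tau}$ (with tolerance smaller than $\varepsilon_{\tau}$), so that membership of a microstate's trace in $V_{\tau}$ literally places the microstate in $\Gamma_{R}(\vec{x},\vec{y};m_{\tau},k,\varepsilon_{\tau};\tau)$ before projecting; and justify the existence of $(m_{\tau},\varepsilon_{\tau})$ for each fixed $\omega$ by noting that $o_{2}(\Sigma,\omega)$ is nonincreasing in $\omega$, so $\mathfrak{K}_{2}^{(2)}(\cdot\,;\omega;\tau)\leq\mathfrak{K}_{2}^{(2)}(\cdot\,;\tau)$ for every $\omega>0$.
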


\begin{proof}
If $\mathfrak{K}_{top}^{(2)}(x_{1},\cdots,x_{n}:y_{1},\cdots,y_{m})=0,$ the
inequality holds automatically. Assume
\[
\mathfrak{K}_{top}^{(2)}(x_{1},\cdots,x_{n}:y_{1},\cdots,y_{m})>\alpha>0,
\]
we need to show that $\mathfrak{K}_{2}^{\left(  2\right)  }\left(
x_{1},\cdots,x_{n}:y_{1},\cdots,y_{m};\tau\right)  >\alpha>0$ for some MF
trace $\tau.$ Let $\left\{  P_{r}\right\}  _{r=1}^{\mathcal{1}}$ be a family
of noncommutative polynomials in $\mathbb{C}\left\langle X_{1},\cdots
,X_{n},Y_{1},\cdots,Y_{m}\right\rangle $ with rational coefficients. Let
$R>\max\left\{  \left\Vert x_{1}\right\Vert ,\cdots,\left\Vert x_{n}%
\right\Vert ,\left\Vert y_{1}\right\Vert ,\cdots,\left\Vert y_{m}\right\Vert
\right\}  .$ From the assumption that $\mathfrak{K}_{top}^{(2)}(x_{1}%
,\cdots,x_{n}:y_{1},\cdots,y_{m})>\alpha,$ it follows that there exist a
positive number $\omega_{0}>0$ and a sequence of positive integers $\left\{
k_{q}\right\}  _{q=1}^{\mathcal{1}}$ with $k_{1}<k_{2}<\cdots,$ so that for
some $\alpha^{\prime}>\alpha$%
\[
\lim_{q\longrightarrow\mathcal{1}}\frac{\log\left(  o_{2}\left(  \Gamma
_{R}^{\left(  top\right)  }\left(  x_{1},\cdots,x_{n}:y_{1},\cdots,y_{m}%
;k_{q},\frac{1}{q},P_{1},\cdots,P_{q}\right)  ,\omega_{0}\right)  \right)
}{k_{q}^{2}}>\alpha^{\prime}.
\]
Let $\mathcal{A}\left(  n+m\right)  $ be the universal unital C*-algebra
generated by self-adjoint elements
\[
a_{1},\cdots,a_{n},a_{n+1},\cdots,a_{n+m}%
\]
of norm $R$, that is the unital full free product of $n+m$ copies of $C\left[
-R,R\right]  .$ A microstate
\begin{align*}
\eta &  =\left(  A_{1},\cdots,A_{n},B_{1},\cdots,B_{m}\right) \\
&  \in\Gamma_{R}^{\left(  top\right)  }\left(  x_{1},\cdots,x_{n},y_{1}%
,\cdots,y_{m};k_{q},\frac{1}{q},P_{1},\cdots,P_{q}\right)  =\Gamma\left(
q\right)
\end{align*}
define a unital *-homomorphism $\varphi_{\eta}:\mathcal{A}\left(  n+m\right)
\longrightarrow\mathcal{M}_{k_{q}}\left(  \mathbb{C}\right)  $ so that
$\varphi_{\eta}\left(  a_{i}\right)  =A_{i}\left(  1\leq i\leq n\right)  $ and
$\varphi_{\eta}\left(  a_{i}\right)  =B_{j}\left(  n+1\leq i\leq n+m\right)  $
as well as a tracial state $\tau_{\eta}\in\mathcal{TS}\left(  \mathcal{A}%
\left(  n+m\right)  \right)  $ with $\tau_{\eta}=\frac{Tr_{k_{q}}\circ
\varphi_{\eta}}{k_{q}}.$ Similarly there is a *-homomorphism $\varphi
:\mathcal{A}\left(  n+m\right)  \longrightarrow\mathcal{A}$ so that
\[
\varphi\left(  a_{i}\right)  =x_{i}\text{ \ \ \ }\left(  1\leq i\leq n\right)
,
\]%
\[
\text{and }\varphi\left(  b_{j}\right)  =y_{j}\text{ }\left(  n+1\leq i\leq
n+m\right)  .
\]

It is not hard to see that the weak topology on $\Omega=\mathcal{TS}\left(
\mathcal{A}\left(  n+m\right)  \right)  $ is induced by the metric
\[
d\left(  \tau_{1},\tau_{2}\right)  =\sum_{s=1}^{\mathcal{1}}\sum_{i_{1}\cdots
i_{s}\in\left(  \left\{  1,\cdots,n+m\right\}  \right)  ^{s}}\left(  2R\left(
n+m\right)  \right)  ^{-s}\left\vert \left(  \tau_{1}-\tau_{2}\right)  \left(
t_{i_{1}}\cdots t_{i_{s}}\right)  \right\vert \text{ }%
\]
where $t_{i}\in\left\{  a_{1},\cdots,a_{n+m}\right\}  .$ Therefore $\Omega$ is
a compact metric space and
\[
K_{q}=\left\{  \tau_{\eta}\in\Omega|\eta\in\Gamma\left(  q\right)  \right\}
\]
is a compact subset of $\Omega$ because $\eta\longrightarrow\tau_{\eta}$ is
continuous and $\Gamma\left(  q\right)  $ is compact. Let further
$K\subseteq\Omega$ denote the compact subset $\left(  \mathcal{TS}\left(
\mathcal{A}\right)  \right)  \circ\varphi.$ Given $\varepsilon>0,$ form the
fact that $\Omega$ is compact it follows that there is some $L\left(
\varepsilon\right)  >0$ so that for each $q\geq1,$%
\[
K_{q}=K_{q}^{1}\cup\cdots\cup K_{q}^{L\left(  \varepsilon\right)  }%
\]
where each compact set $K_{q}^{j}$ has diameter$<\varepsilon.$ Define
\[
\Phi:\underset{n+m}{\underbrace{\mathcal{M}_{k_{q}}\left(  \mathbb{C}\right)
\oplus\cdots\oplus\mathcal{M}_{k_{q}}\left(  \mathbb{C}\right)  }%
}\longrightarrow\underset{n}{\underbrace{\mathcal{M}_{k_{q}}\left(
\mathbb{C}\right)  \oplus\cdots\oplus\mathcal{M}_{k_{q}}\left(  \mathbb{C}%
\right)  }}%
\]
by $\Phi\left(  A_{1},\cdots,A_{n},B_{1},\cdots,B_{m}\right)  =\left(
A_{1},\cdots,A_{n}\right)  .$ Let
\[
\Gamma\left(  q,j\right)  =\left\{  \eta\in\Gamma\left(  q\right)  |\tau
_{\eta}\in K_{q}^{j}\right\}
\]
We have $\Gamma\left(  q\right)  =\Gamma\left(  q,1\right)  \cup\cdots
\cup\Gamma\left(  q,L\left(  \varepsilon\right)  \right)  .$ Let further
$\Gamma^{\prime}\left(  q\right)  $ denote some $\Gamma\left(  q,j\right)  $
such that%
\[
o_{2}\left(  \Phi\left(  \Gamma^{\prime}\left(  q\right)  \right)  ,\omega
_{0}\right)  \geq\frac{o_{2}\left(  \Phi\left(  \Gamma\left(  q\right)
\right)  ,\omega_{0}\right)  }{L\left(  \varepsilon\right)  }.
\]
Thus we have $\lim_{q\longrightarrow\mathcal{1}}\frac{\log\left(  o_{2}\left(
\Phi\left(  \Gamma^{\prime}\left(  q\right)  \right)  ,\omega_{0}\right)
\right)  }{k_{q}^{2}}>\alpha^{\prime}.$ Given $\varepsilon$ successively the
values $1,1/2,\cdots,1/s,\cdots,$ we can find a subsequence $\left\{
q_{s}\right\}  _{s=1}^{\mathcal{1}}$ such that the chosen set $K_{q_{s}%
}^{j_{s}}\subseteq K_{q_{s}}$ has diameter $<\frac{1}{\varepsilon}$ and the
corresponding set $\Gamma^{\prime}\left(  q_{s}\right)  $ satisfying
\[
\lim_{q\longrightarrow\mathcal{1}}\frac{\log\left(  o_{2}\left(  \Phi\left(
\Gamma^{\prime}\left(  q_{s}\right)  \right)  ,\omega_{0}\right)  \right)
}{k_{q_{s}}^{2}}>\alpha^{\prime}%
\]
Without loss of generality, we can assume that $\tau$ is the weak limit of
some sequence $\left(  \tau_{\eta\left(  q_{s}\right)  }\right)
_{s=1}^{\mathcal{1}}.$ Then $\tau\in K.$ In fact
\[
\left\vert \tau\left(  Q\left(  a_{1},\cdots,a_{n},b_{1},\cdots,b_{m}\right)
\right)  \right\vert =\lim_{s\longrightarrow\mathcal{1}}\left\vert \tau
_{\eta\left(  q_{s}\right)  }\left(  Q\left(  a_{1},\cdots,a_{n},b_{1}%
,\cdots,b_{m}\right)  \right)  \right\vert ,
\]
therefore $\tau$ is an MF trace.

We can further assume that there is a subsequence $\left\{  q_{s\left(
t\right)  }\right\}  _{t=1}^{\mathcal{1}}$ of $\left\{  q_{s}\right\}
_{s=1}^{\mathcal{1}}$ so that the chosen set $K_{q_{s}\left(  t\right)
}^{j_{s}\left(  t\right)  }\subseteq K_{q_{s}\left(  t\right)  }\subseteq
B\left(  \tau,\frac{1}{t}\right)  ,$ the ball of radius 1/t and center $\tau.$
Therefore, for any $m\in\mathbb{N}$ and $\varepsilon>0$, we have%
\[
\Gamma^{\prime}\left(  q_{s\left(  t\right)  }\right)  \subseteq\Gamma
_{R}\left(  x_{1},\cdots,x_{n},y_{1},\cdots,y_{m};k_{q_{s}\left(  t\right)
},m,\varepsilon;\tau\right)
\]
when $t$ is large enough. Thus $\Phi\left(  \Gamma^{\prime}\left(  q_{s\left(
t\right)  }\right)  \right)  \subseteq\Phi\left(  \Gamma_{R}\left(
x_{1},\cdots,x_{n},y_{1},\cdots,y_{m};k_{q_{s}\left(  t\right)  }%
,m,\varepsilon;\tau\right)  \right)  .$ Hence
\begin{align*}
\mathfrak{K}_{2}^{\left(  2\right)  }\left(  x_{1},\cdots,x_{n}:y_{1}%
,\cdots,y_{m};\tau\right)   &  \geq\mathfrak{K}_{2}^{\left(  2\right)
}\left(  x_{1},\cdots,x_{n}:y_{1},\cdots,y_{m};\omega_{0};\tau\right) \\
&  \geq\lim_{t\longrightarrow\mathcal{1}}\frac{\log o_{2}\left(  \Phi\left(
\Gamma^{\prime}\left(  q_{s\left(  t\right)  }\right)  \right)  ,\omega
_{0}\right)  }{k_{q_{s\left(  t\right)  }}^{2}}>\alpha^{\prime},
\end{align*}
and hence
\[
\mathfrak{K}_{top}^{(2)}(x_{1},\cdots,x_{n}:y_{1},\cdots,y_{m})\leq\sup
_{\tau\in\mathcal{T}_{MF}\left(  \mathcal{A}\right)  }\mathfrak{K}%
_{2}^{\left(  2\right)  }\left(  x_{1},\cdots,x_{n}:y_{1},\cdots,y_{m}%
;\tau\right)  .
\]

\end{proof}

Now we are ready to simplify the proof of the following theorem.

\begin{theorem}
\label{don}(\cite{HLSW}) Suppose $\mathcal{A}$ is an MF-nuclear C*-algebra
with a family of self-adjoint generators $x_{1},\cdots,x_{n}.$ Then
\[
\delta_{top}\left(  x_{1},\cdots,x_{n}\right)  \leq1.
\]

\end{theorem}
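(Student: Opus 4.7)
The plan is to chain the newly established Theorem \ref{li} with the hyperfiniteness of GNS representations afforded by MF-nuclearity, and then close with the standard passage from the topological orbit dimension to the topological free entropy dimension.

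First, I would apply Theorem \ref{li} with the tuple of presence-variables taken to be empty to obtain
\[
\mathfrak{K}_{top}^{(2)}(x_{1},\ldots,x_{n}) \leq \sup_{\tau\in\mathcal{T}_{MF}(\mathcal{A})} \mathfrak{K}_{2}^{(2)}(x_{1},\ldots,x_{n};\tau).
\]
Next, I would fix an arbitrary $\tau\in\mathcal{T}_{MF}(\mathcal{A})$. Since $\mathcal{A}$ is MF-nuclear, by definition $\pi_{\tau}(\mathcal{A})''$ is hyperfinite. A classical result of Voiculescu (in the form used in \cite{HS1}) then yields $\mathfrak{K}_{2}(\pi_{\tau}(x_{1}),\ldots,\pi_{\tau}(x_{n});\tau)=0$: microstates for a family of self-adjoint generators of a finite hyperfinite von Neumann algebra can be pushed inside a single $\|\cdot\|_{2}$-unitary orbit using the AFD structure. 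Combined with Remark \ref{inddd}, this gives $\mathfrak{K}_{2}^{(2)}(x_{1},\ldots,x_{n};\tau)=0$ for every MF-trace $\tau$. Taking the supremum, $\mathfrak{K}_{top}^{(2)}(x_{1},\ldots,x_{n})=0$.

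Finally, I would invoke the estimate
\[
\delta_{top}(x_{1},\ldots,x_{n}) \leq \mathfrak{K}_{top}^{(2)}(x_{1},\ldots,x_{n}) + 1
\]
established in \cite{HLS} to conclude $\delta_{top}(x_{1},\ldots,x_{n})\leq 1$. The idea underlying that estimate is to cover $\Gamma_{R}^{(top)}$ by $o_{2}$-orbit-balls at a suitably small scale and then, within each orbit, use Szarek's $(C/\omega)^{k^{2}}$-net for $\mathcal{U}(k)$ in the operator norm; a perturbation argument exploiting the uniform $R$-bound in $\Gamma_{R}^{(top)}$ converts the data into an operator-norm covering of the norm-microstate space, contributing the additive $+1$.

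The principal obstacle in this chain is the last bridge, since it must reconcile a trace-norm orbit-covering with an operator-norm microstates-covering: trace-norm proximity does not in general imply operator-norm proximity, and the needed reconciliation relies on the nontrivial perturbation machinery already developed in \cite{HLS}. Once this is granted, everything else is a crisp invocation --- Theorem \ref{li} (just proved), MF-nuclearity of $\mathcal{A}$ (the hypothesis), the classical vanishing of $\mathfrak{K}_{2}$ for hyperfinite algebras, and Remark \ref{inddd} --- and this is precisely why the present argument genuinely shortens the original proof of the theorem in \cite{HLSW}.
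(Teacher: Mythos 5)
Your argument is correct and is essentially the paper's own proof: the paper likewise combines Theorem \ref{li} (with no presence variables) with the hyperfiniteness of $\pi_{\tau}\left( \mathcal{A}\right)^{\prime\prime}$ for MF-traces, the vanishing of $\mathfrak{K}_{2}$ from \cite{HS1} together with Remark \ref{inddd} to get $\mathfrak{K}_{2}^{\left(2\right)}\left(x_{1},\cdots,x_{n};\tau\right)=0$, and then Theorem 3.1.2 of \cite{HLS} (your bound $\delta_{top}\leq\mathfrak{K}_{top}^{(2)}+1$) to conclude $\delta_{top}\left(x_{1},\cdots,x_{n}\right)\leq1$. No substantive difference beyond your added commentary on the covering argument behind the cited estimate.
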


\begin{proof}
It is known that the GNS representation of an MF nuclear C*-algebra with
respect to an MF tracial state yields an injective von Neumann algebra. From
\cite{HS1} and Remark \ref{inddd}
\[
\mathfrak{K}_{2}^{\left(  2\right)  }\left(  x_{1},\cdots,x_{n};\tau\right)
=0\text{ for any }\tau\in\mathcal{TS}_{MF}\left(  \mathcal{A}\right)
\]
where $\mathcal{TS}_{MF}\left(  \mathcal{A}\right)  $ is the set of all MF
tracial states of $\mathcal{A}$. So, from Theorem \ref{li}, we know that
$\mathfrak{K}_{top}^{\left(  2\right)  }\left(  x_{1},\cdots,x_{n}\right)
=0.$ Hence by Theorem 3.1.2 in \cite{HLS},
\[
\delta_{top}\left(  x_{1},\cdots,x_{n}\right)  \leq1.
\]

\end{proof}

\section{Topological free entropy dimension of inner quasidiagonal
C*-algebras}

In this section, we will first recall the concept of inner quasidiagonal
C*-algebras which was first introduced by Blackadar and Kirchberg in
\cite{BK2}. After that, we are going to analyze the topological free entropy
dimension of an MF-nuclear and inner quasidiagonal C*-algebra.

\begin{definition}
(\cite{BK2}) If $\mathcal{B}$ is a C*-algebra, then a projection
$p\in\mathcal{B}$ is in the socle of $\mathcal{B}$ if $p\mathcal{B}p$ is
finite-dimensional. Denote the set of such projections by $socle\left(
\mathcal{B}\right)  .$
\end{definition}

\begin{proposition}
(\cite{BK2}) A C*-algebra $\mathcal{A}$ is inner quasidiagonal if and only if,
for any $x_{1},\cdots,x_{m}\in\mathcal{A}$ and $\varepsilon>0,$ there is a
projection $p\in socle\left(  \mathcal{A}^{\ast\ast}\right)  $ with
$\left\Vert px_{j}p\right\Vert >\left\Vert x_{j}\right\Vert -\varepsilon$ and
$\left\Vert px_{j}-x_{j}p\right\Vert <\varepsilon$ for all $j.$
\end{proposition}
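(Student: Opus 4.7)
The plan is to prove both implications by translating between the intrinsic definition of inner quasidiagonality and the characterization via projections in the universal enveloping von Neumann algebra $\mathcal{A}^{\ast\ast}$. Recall that a projection $p\in\mathcal{A}^{\ast\ast}$ lies in $\mathrm{socle}(\mathcal{A}^{\ast\ast})$ precisely when $p\mathcal{A}^{\ast\ast}p$ is finite-dimensional, which occurs iff $p$ decomposes as a finite sum of projections whose central supports correspond to finite-dimensional GNS-type summands of $\mathcal{A}^{\ast\ast}$. This is the bridge between the two formulations.

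First I would handle the easier direction $(\Leftarrow)$. Given $x_1,\ldots,x_m$ and $\varepsilon>0$, pick a projection $p\in\mathrm{socle}(\mathcal{A}^{\ast\ast})$ satisfying the two estimates. Define $\varphi:\mathcal{A}\to p\mathcal{A}^{\ast\ast}p$ by $a\mapsto pap$; then $p\mathcal{A}^{\ast\ast}p$ is a finite-dimensional C*-algebra, and the almost-commutation $\|px_j-x_jp\|<\varepsilon$ says $\varphi$ is $\varepsilon$-multiplicative on the given tuple, while $\|px_jp\|>\|x_j\|-\varepsilon$ says $\varphi$ almost preserves norms. Letting the tuple and $\varepsilon$ vary and collecting such finite-dimensional compressions yields a separating family of approximately multiplicative, approximately isometric maps into finite-dimensional algebras with the compressing projections living in $\mathcal{A}^{\ast\ast}$; this is precisely one of the equivalent formulations of inner quasidiagonality from \cite{BK2}.

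For the harder direction $(\Rightarrow)$, I would start from the definition that $\mathcal{A}$ has a separating family of inner-quasidiagonal (GNS) representations, or equivalently that for each finite $F\subseteq\mathcal{A}$ and $\varepsilon>0$ there exist a representation $\pi$ and a finite-rank projection $q\in\pi(\mathcal{A})''$ with $\|q\pi(x_j)-\pi(x_j)q\|<\varepsilon$ and $\|q\pi(x_j)q\|>\|\pi(x_j)\|-\varepsilon$. By taking $\pi$ to be (or to dominate) a faithful representation, we may assume $\|\pi(x_j)\|=\|x_j\|$. The key step is to realize $q$ inside $\mathcal{A}^{\ast\ast}$: the representation $\pi$ extends uniquely to a normal surjective $\ast$-homomorphism $\widetilde{\pi}:\mathcal{A}^{\ast\ast}\to\pi(\mathcal{A})''$, whose kernel is a central projection $z\in Z(\mathcal{A}^{\ast\ast})$. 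The corner $(1-z)\mathcal{A}^{\ast\ast}$ is canonically isomorphic (as a von Neumann algebra) to $\pi(\mathcal{A})''$, so we may lift $q$ to a projection $p\in(1-z)\mathcal{A}^{\ast\ast}\subseteq\mathcal{A}^{\ast\ast}$ satisfying $\widetilde{\pi}(p)=q$ and $p\mathcal{A}^{\ast\ast}p\cong q\pi(\mathcal{A})''q$, which is finite-dimensional, so $p\in\mathrm{socle}(\mathcal{A}^{\ast\ast})$. Because $\widetilde{\pi}$ restricted to the central summand is an isometric $\ast$-isomorphism on the relevant corners, the two norm inequalities transfer from $(q,\pi(x_j))$ back to $(p,x_j)$.

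The main obstacle is the lifting step in $(\Rightarrow)$: a priori the condition defining inner quasidiagonality places the witness projection $q$ in $\pi(\mathcal{A})''$ for a particular representation, not in $\mathcal{A}^{\ast\ast}$, and one has to verify that the central-summand identification preserves both finite-dimensionality of the corner (to stay in the socle) and the two approximation inequalities. This amounts to using that the canonical map $\mathcal{A}^{\ast\ast}\to\pi(\mathcal{A})''$ is a normal quotient that restricts to an isometric isomorphism on the complement of its kernel, together with the fact that $\mathrm{socle}(\mathcal{A}^{\ast\ast})$ is closed under cutting by central projections. Once this bookkeeping is in place, the rest of the argument is routine.
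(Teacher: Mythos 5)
The paper itself gives no proof of this proposition (it is quoted from \cite{BK2}), so your attempt has to stand on its own. Your forward direction is essentially the standard argument and is fine: extend $\pi$ to the normal surjection $\widetilde{\pi}\colon\mathcal{A}^{\ast\ast}\to\pi(\mathcal{A})''$, let $z$ be the central projection with $\ker\widetilde{\pi}=z\mathcal{A}^{\ast\ast}$, pull the finite-rank $q$ back to $p\leq 1-z$, and use that $\widetilde{\pi}$ is isometric on $(1-z)\mathcal{A}^{\ast\ast}$ together with $px_j=p\,(1-z)x_j(1-z)$ to transfer both estimates; since $p\mathcal{A}^{\ast\ast}p\cong q\pi(\mathcal{A})''q$ is finite-dimensional, $p$ lies in the socle. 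One caveat: the definition of inner quasidiagonality bounds $\left\Vert q\pi(x_j)q\right\Vert$ below by $\left\Vert x_j\right\Vert-\varepsilon$ (norm in $\mathcal{A}$), not by $\left\Vert \pi(x_j)\right\Vert-\varepsilon$ as you wrote; your proposed repair ``take $\pi$ to be or to dominate a faithful representation'' is not available, because the definition hands you a specific $\pi$ and replacing it by $\pi\oplus\rho$ does not keep $q\oplus 0$ inside $(\pi\oplus\rho)(\mathcal{A})''$ in general. With the correct definition no repair is needed.

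The backward direction, however, has a genuine gap. Starting from a socle projection $p\in\mathcal{A}^{\ast\ast}$ you must produce a representation $\pi$ and a finite-rank projection in $\pi(\mathcal{A})''$ satisfying the two estimates, since that is the definition you yourself use in the other direction. Observing that $a\mapsto pap$ is approximately multiplicative and approximately isometric into the finite-dimensional algebra $p\mathcal{A}^{\ast\ast}p$ only gives a quasidiagonality-type approximation; the whole content of ``inner'' is where the compressing projection lives, and your appeal to ``precisely one of the equivalent formulations of inner quasidiagonality from \cite{BK2}'' is circular, because that formulation is a rephrasing of the very statement being proved. The missing step is the structure of socle projections: since $Z\left(p\mathcal{A}^{\ast\ast}p\right)\cong Z\left(\mathcal{A}^{\ast\ast}\right)c(p)$, finite-dimensionality of $p\mathcal{A}^{\ast\ast}p$ forces the central support $c(p)$ to be a finite sum of central projections $z_i$ for which $z_i\mathcal{A}^{\ast\ast}$ is a type I factor $B(H_i)$ and $z_ip$ has finite rank; then $\pi(a)=c(p)a$ is a representation with $\pi(\mathcal{A})''=c(p)\mathcal{A}^{\ast\ast}\cong\oplus_iB(H_i)$, $p$ is a finite-rank projection there, and $pc(p)=p$ makes both inequalities transfer verbatim. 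Relatedly, your opening claim that a socle projection decomposes into pieces whose central supports correspond to finite-dimensional summands of $\mathcal{A}^{\ast\ast}$ is false as stated: the relevant central summands are type I factors that may well be infinite-dimensional (take $\mathcal{A}$ the compacts on $H$, $p$ a rank-one projection, $c(p)=1$, summand $B(H)$); it is $p$, not the summand, that is of finite rank, and this is exactly the structural fact your sketch needed but did not prove.
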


\begin{theorem}
\label{11b}(\cite{BK3}) A separable C*-algebra is inner QD if and only if it
has a separating family of quasidiagonal irreducible representations.
\end{theorem}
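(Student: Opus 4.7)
The plan is to prove the equivalence in two directions, working with the atomic decomposition of $\mathcal{A}^{\ast\ast}$ to translate between projections in $\operatorname{socle}(\mathcal{A}^{\ast\ast})$ and finite-rank approximants in irreducible representations.

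For the implication ($\Leftarrow$), suppose $\{\pi_\lambda\}$ is a separating family of quasidiagonal irreducible representations. Given $x_1,\dots,x_m \in \mathcal{A}$ and $\varepsilon>0$, the separating property gives $\|x_j\|=\sup_\lambda\|\pi_\lambda(x_j)\|$, so choose finitely many $\lambda_1,\dots,\lambda_m$ with $\|\pi_{\lambda_j}(x_j)\|>\|x_j\|-\varepsilon$, and form $\pi=\bigoplus_j \pi_{\lambda_j}$. A finite direct sum of quasidiagonal representations is quasidiagonal, so I can produce a finite-rank projection $q\in B(H_\pi)$ satisfying $\|q\pi(x_j)q\|>\|x_j\|-\varepsilon$ and $\|[q,\pi(x_j)]\|<\varepsilon$. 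The key point is then to lift $q$ to a projection in $\operatorname{socle}(\mathcal{A}^{\ast\ast})$: because each $\pi_{\lambda_j}$ is irreducible, its normal extension $\bar\pi_{\lambda_j}:\mathcal{A}^{\ast\ast}\to B(H_{\lambda_j})$ has image equal to all of $B(H_{\lambda_j})$, and its kernel is a weakly closed ideal complemented by a central projection $z_{\lambda_j}$ with $z_{\lambda_j}\mathcal{A}^{\ast\ast}\cong B(H_{\lambda_j})$. A finite-rank projection $q_j$ in this type I factor pulls back to a projection $p_j$ with $p_j\mathcal{A}^{\ast\ast}p_j$ finite-dimensional, hence $p_j\in\operatorname{socle}(\mathcal{A}^{\ast\ast})$, and $p=\sum p_j$ inherits the norm and commutator estimates.

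For the implication ($\Rightarrow$), assume $\mathcal{A}$ is inner quasidiagonal. I would first recall that $\operatorname{socle}(\mathcal{A}^{\ast\ast})$ is carried by the atomic part of $\mathcal{A}^{\ast\ast}$, which decomposes as $\bigoplus_\lambda B(H_\lambda)$ indexed by (equivalence classes of) irreducible representations $\pi_\lambda$, so every projection $p\in\operatorname{socle}(\mathcal{A}^{\ast\ast})$ splits as a finite sum $p=\sum_{i=1}^N p_i$ with each $p_i$ a finite-rank projection in some $B(H_{\lambda_i})$. To produce a separating family, given $0\neq a\in\mathcal{A}$, apply the inner QD property with $x_1=a$ to get $p$ with $\|pap\|>\|a\|/2$; in the atomic decomposition some component $p_i a p_i$ is nonzero, so $\pi_{\lambda_i}(a)\neq 0$, and varying $a$ over a dense sequence yields a separating subfamily. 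To verify quasidiagonality of each $\pi_\lambda$ appearing, I would apply the inner QD condition to increasing finite subsets of a countable dense set in $\mathcal{A}$ together with a tolerance shrinking to zero, and pass to subsequences producing projections whose $\lambda$-components converge strongly to the identity in $B(H_\lambda)$ while asymptotically commuting with $\pi_\lambda(\mathcal{A})$.

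The main obstacle is the bookkeeping in the forward direction: the inner QD condition only guarantees approximating projections for the fixed finite set, and there is no a priori control over \emph{which} irreducible components receive nontrivial mass from one step to the next. To extract a genuine quasidiagonalizing sequence for a \emph{fixed} irreducible $\pi_\lambda$, one must refine the choice of $p$ at each stage so that the $z_\lambda$-cutdown of $p$ increases to the identity of $B(H_\lambda)$; this requires combining the inner QD condition with a diagonal/compactness argument over the atomic summands, which is the technical heart of Blackadar--Kirchberg's proof and the step I would expect to be most delicate.
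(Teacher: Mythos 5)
A preliminary remark: the paper itself does not prove Theorem \ref{11b}; it is quoted from Blackadar--Kirchberg \cite{BK3} and used as a black box in the proof of Theorem \ref{li2}. So there is no in-paper argument to compare against, and your proposal has to stand on its own as a proof of the cited result.

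Judged that way, it proves only the easy half. In the direction from a separating family of quasidiagonal irreducible representations to inner QD, the idea (transfer finite-rank projections into $\mathrm{socle}(\mathcal{A}^{\ast\ast})$ through the central covers $z_{\lambda_j}$) is right, but there is a repairable slip: the single finite-rank projection $q$ you obtain from quasidiagonality of $\pi=\bigoplus_j\pi_{\lambda_j}$ lives in $B(\bigoplus_j H_{\lambda_j})$ and need not be block-diagonal, hence need not lie in $\pi(\mathcal{A})''\cong\bigoplus_j B(H_{\lambda_j})$, so it cannot be pulled back through the isomorphisms $z_{\lambda_j}\mathcal{A}^{\ast\ast}\cong B(H_{\lambda_j})$ as written. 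The fix is to quasidiagonalize each $\pi_{\lambda_j}$ separately on the same finite set, pull each resulting $q_j$ back to $p_j\leq z_{\lambda_j}$, discard repetitions among equivalent $\lambda_j$, and use that the $p_j$ are mutually orthogonal (orthogonal central covers) so that $p=\sum_j p_j$ is a socle projection satisfying the norm and commutator estimates.

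The genuine gap is the forward direction, which is the actual content of \cite{BK3}. Your argument produces, for each $a\neq 0$, an irreducible representation $\pi_\lambda$ with $\pi_\lambda(a)\neq 0$ that receives a nonzero component of some socle projection; it does not show that $a$ is separated by a \emph{quasidiagonal} irreducible representation. Quasidiagonality of a fixed $\pi_\lambda$ requires an increasing sequence of finite-rank projections on $H_\lambda$ converging strongly to $1_{H_\lambda}$ and asymptotically commuting in norm with $\pi_\lambda(\mathcal{A})$; the inner QD condition, applied to larger finite sets with smaller tolerances, gives no control over which atomic summands the successive projections charge, and in particular no way to force the $z_\lambda$-cutdowns to capture prescribed vectors of $H_\lambda$ --- they may simply vanish from some point on, and a subsequence or compactness extraction cannot manufacture the required strong convergence to $1_{H_\lambda}$. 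Your closing paragraph concedes exactly this and defers it as the ``technical heart,'' but that step is the theorem: one must construct, for each nonzero $a$, a single irreducible representation that is simultaneously nonvanishing at $a$ and quasidiagonal, which in \cite{BK3} requires a genuine recursive construction (choosing socle projections and pure states so that the components in one fixed irreducible summand increase to the identity), not bookkeeping left to the reader. For the purposes of this paper the citation suffices, but as a standalone proof the proposal is incomplete.
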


It is well-known that every residually finite-dimensional (RFD) C*-algebra is
inner quasidiagonal. In \cite{HLSW}, it has shown that $\delta_{top}\left(
x_{1},\cdots,x_{n}\right)  =1-\frac{1}{\dim\mathcal{A}}$ in which
$\mathcal{A}=C^{\ast}\left(  x_{1},\cdots,x_{n}\right)  $ is MF-nuclear and
residually finite-dimensional. Next theorem will generalize this result to
inner quasidiagonal C*-algebras with finite generators.

\begin{theorem}
\label{li2}Suppose $\mathcal{A=}C^{\ast}\left(  x_{1},\cdots,x_{n}\right)  $
is MF-nuclear and inner quasidiagonal, then $\delta_{top}\left(  x_{1}%
,\cdots,x_{n}\right)  =1-\frac{1}{\dim\mathcal{A}}.$
\end{theorem}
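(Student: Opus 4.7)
The proof splits by $\dim\mathcal{A}$. If $\dim\mathcal{A}<\infty$, then $\mathcal{A}$ is a finite-dimensional C*-algebra, which is automatically RFD (and nuclear), so the conclusion $\delta_{top}(x_{1},\ldots,x_{n})=1-\frac{1}{\dim\mathcal{A}}$ is precisely the prior result of \cite{HLSW}. It therefore suffices to treat the case $\dim\mathcal{A}=\infty$, where the target becomes $\delta_{top}(x_{1},\ldots,x_{n})=1$.

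For the upper bound $\delta_{top}(x_{1},\ldots,x_{n})\leq 1$, I would simply invoke Theorem \ref{don}, which uses MF-nuclearity alone (with no role for inner QD) via Theorem \ref{li}, Remark \ref{inddd}, and Theorem 3.1.2 of \cite{HLS}. This gives the full upper bound for free in the infinite-dimensional case.

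For the lower bound $\delta_{top}(x_{1},\ldots,x_{n})\geq 1$, I would adapt the RFD lower-bound argument of \cite{HLSW}, replacing finite-dimensional irreducible representations with finite-rank compressions of quasidiagonal irreducible representations, which are supplied by Theorem \ref{11b}. Fixing $\varepsilon>0$ and polynomial constraints $Q_{1},\ldots,Q_{r}$, pick a quasidiagonal irreducible representation $\pi:\mathcal{A}\to B(H)$ from the separating family and, by quasidiagonality, a finite-rank projection $P\in B(H)$ of large rank $k$ with $\|[P,\pi(x_{i})]\|$ small and $\|P\pi(x_{i})P\|$ close to $\|x_{i}\|$. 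The tuple $(P\pi(x_{1})P,\ldots,P\pi(x_{n})P)$ and its entire $\mathcal{U}(k)$-orbit then lie in $\Gamma_{R}^{(top)}(x_{1},\ldots,x_{n};k,\varepsilon,Q_{1},\ldots,Q_{r})$. A Voiculescu-type volume estimate bounds the $\omega$-$\|\cdot\|$-covering number of the orbit below by $(c/\omega)^{k^{2}-o(k^{2})}$, provided the compression is \emph{generic} in the sense that its commutant in $\mathcal{M}_{k}(\mathbb{C})$ is scalar. Dividing the logarithm by $-k^{2}\log\omega$ and letting $k\to\infty$ then $\omega\to 0^{+}$ yields $\delta_{top}(x_{1},\ldots,x_{n})\geq 1$.

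The main obstacle is arranging this genericity: the compressed tuple must, possibly after small perturbation, come arbitrarily close to generating all of $\mathcal{M}_{k}(\mathbb{C})$, so that the orbit map $W\mapsto W(P\pi(x)P)W^{\ast}$ is an immersion on a dense open set and the orbit has real dimension $k^{2}-1$. I would enforce this by choosing $P$ to project onto a finite-dimensional cyclic subspace of $H$ containing sufficiently many iterates of $\pi(x_{i})$ applied to a cyclic vector; irreducibility of $\pi$ then guarantees that such a $P$ can be taken of any prescribed large rank, with commutant of the compressed tuple as small as needed. A standard diagonal interlacing of $k$, $\varepsilon$, $r$, and $P$ then upgrades the pointwise lower bound to the required uniform estimate.
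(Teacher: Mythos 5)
Your finite--dimensional case and your upper bound agree with the paper (both just quote the RFD result of \cite{HLSW} and Theorem \ref{don}), but the lower bound argument has genuine gaps. First, membership in $\Gamma_{R}^{(top)}(x_{1},\ldots,x_{n};k,\varepsilon,Q_{1},\ldots,Q_{r})$ is a \emph{two-sided} condition $\left\vert \left\Vert Q_{j}(A)\right\Vert -\left\Vert Q_{j}(x)\right\Vert \right\vert \leq\varepsilon$. A single quasidiagonal irreducible representation $\pi$ taken from a separating family need not be faithful, hence not isometric, so $\left\Vert Q_{j}(\pi(x))\right\Vert$ (and a fortiori $\left\Vert Q_{j}(P\pi(x)P)\right\Vert$) can be far below $\left\Vert Q_{j}(x)\right\Vert$; your compressed tuple then simply fails to lie in the microstates space. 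One needs finite direct sums from the family, or, as the paper does, the socle-projection characterization of inner quasidiagonality from \cite{BK2} together with the semi-microstates $\Gamma_{1/2}^{\text{top}}$ and the identity $\delta_{\text{top}}^{1/2}=\delta_{\text{top}}$, which require only one-sided norm inequalities. Second, and more seriously, the covering estimate you invoke does not follow from your ``genericity'' hypothesis: scalar commutant only says the orbit is a manifold of real dimension $k^{2}-1$; it gives no lower bound on its $\omega$-covering number at a \emph{fixed} scale $\omega$ uniformly in $k$. For instance, a tuple of matrices lying within $\omega/2$ of scalars can have scalar commutant while its entire unitary orbit sits inside one $\omega$-ball, so the covering number is $1$. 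Since $\delta_{\text{top}}$ takes $\limsup_{k}$ at fixed $\omega$ before letting $\omega\rightarrow0^{+}$, you need a quantitative, $k$-uniform spread of the orbit; producing such estimates is precisely the hard content of the lower-bound results (it is what the formula $\delta_{\text{top}}=1-\frac{1}{\dim}$ of \cite{HLSW} encapsulates), and it cannot be waved in as ``a Voiculescu-type volume estimate.'' Finally, your device for enforcing genericity --- taking $P$ to be the projection onto a cyclic subspace spanned by iterates --- conflicts with quasidiagonality: such projections need not almost commute with the $\pi(x_{i})$, and quasidiagonality only supplies \emph{some} almost-commuting sequence of finite-rank projections, not one of your choosing.

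By contrast, the paper's proof never reproves any packing bound. Using the \cite{BK2} proposition it chooses an increasing sequence of projections $P_{t}\in socle\left(\mathcal{A}^{\ast\ast}\right)$ that almost commute with finite sets exhausting $\mathcal{A}$ and almost preserve norms; then $C^{\ast}(P_{t}x_{1}P_{t},\ldots,P_{t}x_{n}P_{t})\subseteq P_{t}\mathcal{A}P_{t}$ is an honestly finite-dimensional C*-algebra whose dimension $N_{t}$ tends to infinity (this is where $\dim\mathcal{A}=\infty$ and the socle are used), the already-established finite-dimensional formula gives $\delta_{\text{top}}(P_{t}x_{1}P_{t},\ldots,P_{t}x_{n}P_{t})=1-\frac{1}{N_{t}}$, and an inclusion of semi-microstate spaces $\Gamma_{1/2}^{\text{top}}(P_{t}x_{1}P_{t},\ldots,P_{t}x_{n}P_{t};k,\varepsilon,\ldots)\subseteq\Gamma_{1/2}^{\text{top}}(x_{1},\ldots,x_{n};k,\varepsilon_{0},\ldots)$ transfers the lower bound, yielding $\delta_{\text{top}}(x_{1},\ldots,x_{n})\geq1-\frac{1}{N_{t}-1}\rightarrow1$. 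If you wish to pursue your direct route, you would in effect have to redo that quantitative orbit-counting, which the paper deliberately imports as a black box.
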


\begin{proof}
If $\dim\mathcal{A<1}$, then $\mathcal{A}$ is RFD. So the conclusion is
followed from Corollary 5.4 in \cite{HLSW}. Now suppose $\dim\mathcal{A=1}$.
Since $\mathcal{A}$ is MF-nuclear, we have $\delta_{top}(x_{1},\cdots
,x_{n})\leq1$ by Theorem \ref{don}.

Let $\mathcal{F}_{0}=$ $\left\{  x_{1},\cdots,x_{n}\right\}  .$ Suppose
\[
\left\{  1\right\}  \cup\mathcal{F}_{0}\subseteq\mathcal{F}_{1}\subseteq
\mathcal{F}_{2}\mathcal{\subseteq\cdots}%
\]
be the sequence of finite subsets of $\mathcal{A}$ such that $\overline
{\cup_{i}\mathcal{F}_{i}}=\mathcal{A}.$ So by Theorem \ref{11b} and the
property of quasidiagonal C*-algebras, we can find a increasing sequence of
projections $\left\{  P_{t}\right\}  _{t=0}^{\mathcal{1}}\in socle\left(
\mathcal{A}^{\ast\ast}\right)  $ such that $\left\Vert P_{t}x-xP_{t}%
\right\Vert <\frac{\varepsilon}{2^{t}}$ and $\left\Vert P_{t}xP_{t}\right\Vert
>\left\Vert x\right\Vert -\frac{\varepsilon}{2^{t}}$ as $x\in\mathcal{F}_{t}.$
Note that
\[
\left(  P_{i}-P_{i-1}\right)  \mathcal{A}\left(  P_{i}-P_{i-1}\right)
\subseteq\mathcal{A}^{\ast\ast}.
\]
Since $P_{t}\mathcal{A}^{\ast\ast}P_{t}=P_{t}\mathcal{A}P_{t},$ then
\begin{align*}
\left(  P_{i}-P_{i-1}\right)  \mathcal{A}\left(  P_{i}-P_{i-1}\right)   &
=P_{t}\left(  \left(  P_{i}-P_{i-1}\right)  \mathcal{A}\left(  P_{i}%
-P_{i-1}\right)  \right)  P_{t}\\
&  \subseteq P_{t}\mathcal{A}^{\ast\ast}P_{t}=P_{t}\mathcal{A}P_{t}\text{
}\ \ \ \ \ \text{as }t\geq i.
\end{align*}
Therefore%
\[
P_{0}\mathcal{A}P_{0}\oplus\left(  P_{1}-P_{0}\right)  \mathcal{A}\left(
P_{1}-P_{0}\right)  \oplus\cdots\oplus\left(  P_{t}-P_{t-1}\right)
\mathcal{A}\left(  P_{t}-P_{t-1}\right)  \subseteq P_{t}\mathcal{A}P_{t}%
\]
It follows that
\begin{align*}
&  \dim\left(  P_{t}\mathcal{A}P_{t}\right) \\
&  \geq\dim\left(  P_{0}\mathcal{A}P_{0}\oplus\left(  P_{1}-P_{0}\right)
\mathcal{A}\left(  P_{1}-P_{0}\right)  \oplus\cdots\oplus\left(  P_{t}%
-P_{t-1}\right)  \mathcal{A}\left(  P_{t}-P_{t-1}\right)  \right)
\end{align*}
Then $\dim\left(  P_{t}\mathcal{A}P_{t}\right)  \rightarrow\mathcal{1}$ as
$t\rightarrow\mathcal{1}$.

Note that $\left\Vert P_{t}x_{j}-x_{j}P_{t}\right\Vert \rightarrow0$ and
$\left\Vert P_{t}x_{j}P_{t}\right\Vert \rightarrow\left\Vert x_{j}\right\Vert
$ as $t\rightarrow\mathcal{1}$ for $1\leq j\leq n,$ so for any $0<\varepsilon
_{0}$ and $r_{0}\in\mathbb{N}$, there are $\varepsilon_{1},r_{1},t_{1}>0$ such
that for every $t>t_{1},0<\varepsilon<\varepsilon_{1}$ and $r>r_{1}$
\[
\Gamma_{1/2}^{\text{top}}\left(  P_{t}x_{1}P_{t},\cdots,P_{t}x_{n}%
P_{t};k,\varepsilon,Q_{1},\cdots,Q_{r}\right)  \subseteq\Gamma_{1/2}%
^{\text{top}}(x_{1},\cdots,x_{n};k,\varepsilon_{0},Q_{1},\cdots,Q_{r_{0}})
\]
for every $k\in\mathbb{N}$. Therefore for any $\omega>0$
\begin{align}
&  \sup_{t>t_{1}}\inf_{\varepsilon<\varepsilon_{1},r>r_{1}}\underset
{k\longrightarrow\mathcal{1}}{\lim\sup}\frac{\log\left(  v_{\left\Vert
\cdot\right\Vert }\left(  \Gamma_{1/2}^{\text{top}}\left(  P_{t}x_{1}%
P_{t},\cdots,P_{t}x_{n}P_{t};k,\varepsilon,Q_{1},\cdots,Q_{r}\right)
,\omega\right)  \right)  }{-k^{2}\log\omega}\nonumber\\
&  \leq\lim\sup_{k\longrightarrow\mathcal{1}}\frac{\log\left(  v_{\left\Vert
\cdot\right\Vert }\left(  \Gamma_{1/2}^{(top)}(x_{1},\cdots,x_{n}%
;k,\varepsilon_{0},Q_{1},\cdots,Q_{r_{0}}),\omega\right)  \right)  }%
{-k^{2}\log\omega}.
\end{align}
It is obvious that $C^{\ast}(P_{t}x_{1}P_{t},\cdots,P_{t}x_{n}P_{t})\subseteq
P_{t}\mathcal{A}P_{t}$ by the fact that $P_{t}\in socle\left(  \mathcal{A}%
^{\ast\ast}\right)  .$ Let $\dim\left(  P_{t_{0}}\mathcal{A}P_{t_{0}}\right)
=M_{t_{0}}<\mathcal{1}$. Then we can find $\left\{  a_{1},\cdots,a_{M_{t_{0}}%
}\right\}  \subseteq\mathcal{A}$ such that
\[
\left\{  P_{t_{0}}a_{1}P_{t_{0}},\cdots,P_{t_{0}}a_{M_{t_{0}}}P_{t_{0}%
}\right\}
\]
is a linearly independent family where $P_{t_{0}}a_{i}P_{t_{0}}$ is an element
with norm 1 for every $i=1,\cdots,M_{t_{0}}$. For any $\varepsilon>0,$ there
are polynomials
\[
H_{1}\left(  X_{1},\cdots,X_{n}\right)  ,\cdots,H_{M_{t_{0}}}\left(
X_{1},\cdots,X_{n}\right)
\]
in $X_{1},\cdots,X_{n}$ such that
\[
\left\Vert a_{i}-H_{i}\left(  x_{1},\cdots,x_{n}\right)  \right\Vert
<\varepsilon\text{ for every }1\leq i\leq M_{t_{0}}%
\]
It follows that
\[
\left\Vert P_{t}a_{i}P_{t}-P_{t}H_{i}\left(  x_{1},\cdots,x_{n}\right)
P_{t}\right\Vert <\varepsilon.
\]
Since $\left\Vert P_{t}x_{j}-x_{j}P_{t}\right\Vert \rightarrow0$ as
$t\rightarrow\mathcal{1}$ for every $1\leq j\leq n$, then there is an integer
$L$ such that
\[
\left\Vert P_{t}a_{i}P_{t}-H_{i}\left(  P_{t}x_{1}P_{t},\cdots,P_{t}x_{n}%
P_{t}\right)  \right\Vert <L\cdot\varepsilon\text{ for every }1\leq i\leq
M_{t_{0}}%
\]
as $t>t_{0}$ big enough. Hence%
\[
\left\Vert P_{t_{0}}a_{i}P_{t_{0}}-P_{t_{0}}H_{i}\left(  P_{t}x_{1}%
P_{t},\cdots,P_{t}x_{n}P_{t}\right)  P_{t_{0}}\right\Vert
\]%
\[
=\left\Vert P_{t_{0}}P_{t}a_{i}P_{t}P_{t_{0}}-P_{t_{0}}H_{i}\left(  P_{t}%
x_{1}P_{t},\cdots,P_{t}x_{n}P_{t}\right)  P_{t_{0}}\right\Vert <L\cdot
\varepsilon
\]
Note that $\varepsilon$ is arbitrary, $\left\{  P_{t_{0}}a_{1}P_{t_{0}}%
,\cdots,P_{t_{0}}a_{M_{t_{0}}}P_{t_{0}}\right\}  $ is a linear basis of
$P_{t_{0}}\mathcal{A}P_{t_{0}}$ and
\[
P_{t_{0}}C^{\ast}(P_{t}x_{1}P_{t},\cdots,P_{t}x_{n}P_{t})P_{t_{0}}\subseteq
P_{t_{0}}\mathcal{A}P_{t_{0}},
\]
then
\[
\left\{  P_{t_{0}}H_{1}\left(  P_{t}x_{1}P_{t},\cdots,P_{t}x_{n}P_{t}\right)
P_{t_{0}},\cdots,P_{t_{0}}H_{M_{t_{o}}}\left(  P_{t}x_{1}P_{t},\cdots
,P_{t}x_{n}P_{t}\right)  P_{t_{0}}\right\}
\]
is a linearly independent family in $P_{t_{0}}C^{\ast}(P_{t}x_{1}P_{t}%
,\cdots,P_{t}x_{n}P_{t})P_{t_{0}}$ as $t$ big enough. Therefore
\[
\dim P_{t_{0}}C^{\ast}(P_{t}x_{1}P_{t},\cdots,P_{t}x_{n}P_{t})P_{t_{0}%
}=M_{t_{0}}.
\]
It follows that, for such $t,$
\begin{align*}
\dim C^{\ast}(P_{t}x_{1}P_{t},\cdots,P_{t}x_{n}P_{t})  &  \geq\dim P_{t_{0}%
}C^{\ast}(P_{t}x_{1}P_{t},\cdots,P_{t}x_{n}P_{t})P_{t_{0}}\\
&  =M_{t_{0}}=\dim\left(  P_{t_{0}}\mathcal{A}P_{t_{0}}\right)
\end{align*}
Let $N_{t}=\dim C^{\ast}(P_{t}x_{1}P_{t},\cdots,P_{t}x_{n}P_{t}).$ Then
\[
N_{t}=\dim C^{\ast}(P_{t}x_{1}P_{t},\cdots,P_{t}x_{n}P_{t})\rightarrow
\mathcal{1}%
\]
as $t\rightarrow\mathcal{1}$ since $M_{t}=\dim\left(  P_{t}\mathcal{A}%
P_{t}\right)  \rightarrow\mathcal{1}$ as $t\rightarrow\mathcal{1}$. Note that
\[
\delta_{top}\left(  P_{t}x_{1}P_{t},\cdots,P_{t}x_{n}P_{t}\right)  =1-\frac
{1}{N_{t}}.
\]
Then, for every $t$, there exists $\omega_{t}$ such that
\begin{align*}
&  1-\frac{1}{N_{t}-1}\text{ }\\
&  \leq\inf_{\varepsilon,r}\underset{k\rightarrow\mathcal{1}}{\lim\sup}%
\frac{v_{\left\Vert \cdot\right\Vert }\left(  \Gamma_{1/2}^{\text{top}}\left(
P_{t}x_{1}P_{t},\cdots,P_{t}x_{n}P_{t};k,\varepsilon,p_{1},\cdots
,p_{r}\right)  ,\omega_{t}\right)  }{-k^{2}\log\omega_{t}}.
\end{align*}
Then, for every $t,$ we have
\begin{align*}
&  1-\frac{1}{N_{t}-1}\\
&  \leq\inf_{\varepsilon_{0},r_{0}}\underset{k\rightarrow\mathcal{1}}{\lim
\sup}\frac{v_{\left\Vert \cdot\right\Vert }\left(  \Gamma_{1/2}^{\text{top}%
}(x_{1},\cdots,x_{n};k,\varepsilon_{0},p_{1},\cdots,p_{r_{0}}),\omega
_{t}\right)  }{-k^{2}\log\omega_{t}}%
\end{align*}
for $\varepsilon_{0},r_{0}$ by (4.1). Therefore%
\begin{align*}
&  \lim_{t\rightarrow\mathcal{1}}\left(  1-\frac{1}{N_{t}-1}\right) \\
&  \leq\underset{\omega_{t}}{\lim\sup}\inf_{\varepsilon_{0},r_{0}}%
\underset{k\rightarrow\mathcal{1}}{\lim\sup}\frac{v_{\left\Vert \cdot
\right\Vert }\left(  \Gamma_{1/2}^{\text{top}}(x_{1},\cdots,x_{n}%
;k,\varepsilon_{0},p_{1},\cdots,p_{r_{0}}),\omega_{t}\right)  }{-k^{2}%
\log\omega}\\
&  \leq\underset{\omega\rightarrow0}{\lim\sup}\inf_{\varepsilon_{0},r_{0}%
}\underset{k\rightarrow\mathcal{1}}{\lim\sup}\frac{v_{\left\Vert
\cdot\right\Vert }\left(  \Gamma_{1/2}^{\text{top}}(x_{1},\cdots
,x_{n};k,\varepsilon_{0},p_{1},\cdots,p_{r_{0}}),\omega\right)  }{-k^{2}%
\log\omega}\\
&  =\delta_{top}\left(  x_{1},\cdots,x_{n}\right)
\end{align*}

Hence $\delta_{top}\left(  x_{1},\cdots,x_{n}\right)  =1=1-\frac{1}%
{\dim\mathcal{A}}$ as $\dim\mathcal{A=1}$. This completes the proof.
\end{proof}

\begin{remark}
From \cite{BK2}, we know that a unital C*-algebra $\mathcal{A}=C^{\ast}\left(
x_{1},\cdots,x_{n}\right)  $ is strong NF if and only if it is nuclear and
inner QD. Therefore we have the following corollary:
\end{remark}

\begin{corollary}
Let $\mathcal{A}=C^{\ast}\left(  x_{1},\cdots,x_{n}\right)  $ be a strong NF
C*-algebra. Then $\delta_{top}\left(  x_{1},\cdots,x_{n}\right)  =1-\frac
{1}{\dim\mathcal{A}}$.
\end{corollary}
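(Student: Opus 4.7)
The plan is to observe that this corollary is a direct consequence of Theorem \ref{li2} together with the characterization of strong NF C*-algebras mentioned in the preceding remark. I would not attempt any new analysis; instead I would simply verify that the hypotheses of Theorem \ref{li2} are satisfied for every strong NF C*-algebra.

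First, I would invoke the Blackadar--Kirchberg result cited in the remark: a unital C*-algebra $\mathcal{A}=C^{\ast}(x_{1},\ldots,x_{n})$ is strong NF if and only if it is nuclear and inner quasidiagonal. Next I would check MF-nuclearity: since $\mathcal{A}$ is nuclear, every GNS representation $\pi_{\tau}(\mathcal{A})''$ is injective (and therefore hyperfinite on a separable Hilbert space) for every tracial state $\tau$, and in particular for every $\tau \in \mathcal{T}_{MF}(\mathcal{A})$; this gives exactly the defining property of MF-nuclearity. I would then note that inner quasidiagonality implies quasidiagonality, and quasidiagonal C*-algebras are MF in the sense of Blackadar--Kirchberg, so $\mathcal{T}_{MF}(\mathcal{A})$ is non-empty and the topological microstates spaces are non-empty for large $k$, making $\delta_{top}$ well-defined.

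Having verified that $\mathcal{A}$ is simultaneously MF-nuclear and inner quasidiagonal, I would apply Theorem \ref{li2} directly to conclude
\[
\delta_{top}\left(x_{1},\ldots,x_{n}\right)=1-\frac{1}{\dim\mathcal{A}}.
\]
There is essentially no obstacle here: the only nontrivial ingredient is the equivalence ``strong NF $\Longleftrightarrow$ nuclear $+$ inner QD'' from \cite{BK2}, which is quoted in the remark, and the bookkeeping observation that nuclear implies MF-nuclear. Both cases $\dim\mathcal{A}<\infty$ and $\dim\mathcal{A}=\infty$ are already handled inside the proof of Theorem \ref{li2}, so no case split is needed at this level.
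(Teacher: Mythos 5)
Your proposal is correct and follows essentially the same route as the paper, which derives the corollary directly from the remark that strong NF is equivalent to nuclear plus inner QD (Blackadar--Kirchberg) and then applies Theorem \ref{li2}. Your additional observations (nuclear implies MF-nuclear via injectivity of $\pi_{\tau}(\mathcal{A})''$, and inner QD implies quasidiagonal hence MF so that $\delta_{top}$ is defined) are exactly the bookkeeping the paper leaves implicit, and they are accurate.
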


\section{\bigskip Definition and properties of $\mathfrak{K}_{top}^{(3)}$}

Suppose $\mathcal{A}$ is a unital C*-algebra.

Let $\mathcal{1\cdot}0=0.$ For any subset $\mathcal{G\subseteq A}$, define
\begin{align*}
\mathfrak{K}_{top}^{(3)}(x_{1},\cdots,x_{n};\mathcal{G)}  &  \mathcal{=}%
\inf\left\{  \mathcal{1\cdot}\mathfrak{K}_{top}^{(2)}(x_{1},\cdots,x_{n}%
:y_{1},\cdots,y_{t}):\left\{  y_{1},\cdots,y\right\}  \text{ is a finite
subset of }\mathcal{G}\right\}  ,\\
\mathfrak{K}_{top}^{(3)}(\mathcal{G)}  &  \mathcal{=}\sup
_{\substack{E\subseteq\mathcal{G}\\E\text{ is finite}}}\inf
_{\substack{F\subseteq\mathcal{G}\\F\text{ is finite}}}\mathfrak{K}%
_{top}^{(3)}(E:F)
\end{align*}

\begin{remark}
When $\mathcal{G}$ is finite, it is not difficult to see that%
\[
\mathfrak{K}_{top}^{(3)}(x_{1},\cdots,x_{n};\mathcal{G)=1\cdot}\mathfrak{K}%
_{top}^{(2)}(x_{1},\cdots,x_{n};\mathcal{G)}%
\]
and
\[
\mathfrak{K}_{top}^{(3)}(\mathcal{G)=1\cdot}\mathfrak{K}_{top}^{(2)}%
(\mathcal{G)}%
\]

\end{remark}

The proof of the following theorem is similar to the Theorem 3.3 in \cite{HW},
so we omit it.

\begin{theorem}
\label{hi}If $\mathcal{A}$ is an MF-algebra, then the following are equivalent:

\begin{enumerate}
\item $\mathfrak{K}_{top}^{(3)}(\mathcal{A)}=0;$

\item if $x_{1},\cdots,x_{n}\in\mathcal{A}$, then there exist $y_{1}%
,\cdots,y_{t}\in\mathcal{A}$ such that $\mathfrak{K}_{top}^{(2)}(x_{1},\cdots
x_{n}:y_{1},\cdots,y_{t})=0;$

\item for any generating set $\mathcal{G}$ of $\mathcal{A}$, $\mathfrak{K}%
_{top}^{(3)}(\mathcal{G)}=0;$

\item there exists a generating set $\mathcal{G}$ of $\mathcal{A}$ such that
$\mathfrak{K}_{top}^{(3)}(\mathcal{G)}=0;$

\item if $\mathcal{G}$ is a generating set of $\mathcal{A}$, and $A_{0}$ is a
finite subset of $\mathcal{G}$, then, for any finite subset $A$ with
$A_{0}\subseteq A\subseteq\mathcal{G}$, there exists a finite subset $B$ of
$\mathcal{G}$ so that $\mathfrak{K}_{top}^{(3)}(A:B)=0.$

\item there is an increasing directed family $\left\{  \mathcal{A}_{i}%
:i\in\Lambda\right\}  $ of C*-subalgebras of $\mathcal{A}$ such that

\begin{enumerate}
\item each $\mathcal{A}_{i}$ is countably generated;

\item $\mathfrak{K}_{top}^{(3)}(\mathcal{A}_{i}\mathcal{)}=0;$

\item $\mathcal{A=\cup}_{i\in\Lambda}\mathcal{A}_{i}$
\end{enumerate}

\item If $A$ is a countable subset of $\mathcal{M}$, then there exists a
countably generated subalgebra $\mathcal{B}$ of $\mathcal{M}$ such that
$A\subseteq\mathcal{B}$ and $\mathfrak{K}_{top}^{(3)}(\mathcal{B)=}0$
\end{enumerate}
\end{theorem}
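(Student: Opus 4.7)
The plan is to prove the equivalences by a standard cyclic chain: (1) $\Rightarrow$ (2) $\Rightarrow$ (3) $\Rightarrow$ (4) $\Rightarrow$ (5) $\Rightarrow$ (1), together with (1) $\Leftrightarrow$ (7) $\Leftrightarrow$ (6). The two workhorses are Theorem \ref{ind} (invariance of $\mathfrak{K}_{top}^{(2)}$ in the presence family under replacement by any other generating family of the same C*-subalgebra) and Remark \ref{indd}(1) (monotonicity: enlarging the presence family cannot increase $\mathfrak{K}_{top}^{(2)}$), together with the key observation that $\mathfrak{K}_{top}^{(3)}(E:F)$ for finite $E,F$ takes values only in $\{0,\infty\}$, so the infimum in the definition of $\mathfrak{K}_{top}^{(3)}$ being $0$ is actually attained.

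The easy implications go through by unpacking. For (1) $\Rightarrow$ (2): since $\sup_E \inf_F \mathfrak{K}_{top}^{(3)}(E:F)=0$, for each finite $E$ some finite $F$ gives $\mathfrak{K}_{top}^{(3)}(E:F)=0$, so some $y_1,\ldots,y_t\in F$ give $\mathfrak{K}_{top}^{(2)}(E:y_1,\ldots,y_t)=0$. The implications (3) $\Rightarrow$ (4) (take $\mathcal{G}=\mathcal{A}$), (4) $\Rightarrow$ (5) (essentially the definition of $\mathfrak{K}_{top}^{(3)}(\mathcal{G})=0$), and (5) $\Rightarrow$ (1) (expand a finite $E\subseteq\mathcal{A}$ into a generating set and apply (5)) are routine. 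The directed-limit conditions are handled similarly: (1) $\Rightarrow$ (7) is a bootstrapping argument where, given a countable $A\subseteq\mathcal{A}$, one inductively enlarges $A$ by adding witnesses $y_1,\ldots,y_t$ from (2) for each finite subtuple, producing a countably generated $\mathcal{B}\supseteq A$ that itself satisfies (2) and hence (1); then (7) $\Rightarrow$ (6) is trivial by taking the family of such $\mathcal{B}$'s, and (6) $\Rightarrow$ (1) follows because any finite $E\subseteq\mathcal{A}$ lies in some $\mathcal{A}_i$, where witnesses from $\mathcal{A}_i$ remain witnesses in $\mathcal{A}$.

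The main technical step is (2) $\Rightarrow$ (3). Fix a generating set $\mathcal{G}$ and a finite $E=\{x_1,\ldots,x_n\}\subseteq\mathcal{G}$; by (2) pick $y_1,\ldots,y_t\in\mathcal{A}$ with $\mathfrak{K}_{top}^{(2)}(E:y_1,\ldots,y_t)=0$. Since $\mathcal{G}$ generates $\mathcal{A}$, for any $\delta>0$ one can choose a finite $W=\{w_1,\ldots,w_s\}\subseteq\mathcal{G}$ and noncommutative polynomials $h_1,\ldots,h_t$ with $\|y_i-h_i(w_1,\ldots,w_s)\|<\delta$ for each $i$. Write $\tilde y_i=h_i(w_1,\ldots,w_s)\in C^*(W)$. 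By Remark \ref{indd}(1),
\[
\mathfrak{K}_{top}^{(2)}(E:y_1,\ldots,y_t,\tilde y_1,\ldots,\tilde y_t,w_1,\ldots,w_s)\leq \mathfrak{K}_{top}^{(2)}(E:y_1,\ldots,y_t)=0.
\]
A small-perturbation estimate, in which each polynomial evaluation on the $y$-family is replaced by evaluation on the $\tilde y$-family, shows that dropping the $y_i$'s costs only an $O(\delta)$ relaxation of the norm-tolerance $\varepsilon$, hence
\[
\mathfrak{K}_{top}^{(2)}(E:\tilde y_1,\ldots,\tilde y_t,w_1,\ldots,w_s)=0.
\]
Because $\tilde y_i\in C^*(W)$, Theorem \ref{ind} identifies this with $\mathfrak{K}_{top}^{(2)}(E:w_1,\ldots,w_s)$, so $\mathfrak{K}_{top}^{(3)}(E:W)=0$ with $W\subseteq\mathcal{G}$, giving (3).

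The hard part will be the norm-perturbation lemma used inside (2) $\Rightarrow$ (3): rigorously writing out that the norm-microstates $\Gamma_R^{(top)}(E:y;k,\varepsilon,P_1,\ldots,P_r)$ deform continuously, in the metric defined by the covering numbers $o_2$, when $y$ is perturbed in norm. This is not difficult since polynomials in $y$ are Lipschitz on norm-bounded sets, so the perturbation can be absorbed by adjusting $\varepsilon$ and replacing each $P_j(X,Y)$ by its composition with $Y_i\mapsto h_i(W)$; still, the bookkeeping of $R$, $\varepsilon$, and the polynomial list $P_1,\ldots,P_r$ must be done carefully. Once this lemma is in place, the rest of the chain follows the template of Theorem 3.3 of [HW].
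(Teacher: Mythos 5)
Your overall skeleton (the cyclic chain, the observation that $\mathfrak{K}_{top}^{(3)}(E:F)\in\{0,\infty\}$ for finite $E,F$ so the infimum being $0$ is attained, monotonicity of the presence family, and the bootstrapping for (7) and (6)) is the right template, essentially the scheme of Theorem 3.3 of [HW] that the paper itself points to, and the easy arrows are fine. But the one step where a real argument is needed, transferring witnesses into an arbitrary generating set ((2) $\Rightarrow$ (3); the same issue reappears in your (4) $\Rightarrow$ (5)), is not established by your perturbation lemma, and as stated it cannot be. The witnesses $y_1,\dots,y_t$ produced by (2) lie in $\mathcal{A}=C^{\ast}(\mathcal{G})$ but in general lie in $C^{\ast}(W)$ for no finite $W\subseteq\mathcal{G}$; hence $\delta=\max_i\Vert y_i-h_i(w_1,\dots,w_s)\Vert$ is a fixed positive number once $W$ and the $h_i$ are chosen, and it can only be decreased by enlarging $W$. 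Your claim that dropping the $y_i$'s costs only an $O(\delta)$ relaxation of $\varepsilon$ then gives nothing: $\mathfrak{K}_{top}^{(2)}(E:\tilde y,W)=0$ requires the infimum over arbitrarily small tolerances $\varepsilon$, while the needed inclusion $\Gamma_R^{(top)}(E:\tilde y,W;k,\varepsilon',Q)\subseteq\Gamma_R^{(top)}(E:y,\tilde y,W;k,\varepsilon_0,P)$ forces $\varepsilon'+L_P\delta\le\varepsilon_0$, i.e. $\delta$ must be chosen \emph{after} $(\varepsilon_0,P)$. With $\delta$ fixed first, all your argument yields is: for each $\omega>0$ and $\eta>0$ there is a finite $W_{\omega,\eta}\subseteq\mathcal{G}$ with $\mathfrak{K}_{top}^{(2)}(E:W_{\omega,\eta};\omega)<\eta$. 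Letting $\omega\to0^{+}$ forces these finite sets to grow, so you obtain a countable subset of $\mathcal{G}$, not the single finite $F\subseteq\mathcal{G}$ with $\mathfrak{K}_{top}^{(2)}(E:F)=0$ that (3) and (5) demand. This is a genuine quantifier-order gap, not bookkeeping.

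Two smaller points. First, Theorem \ref{ind} is invariance of $\mathfrak{K}_{top}^{(2)}$ under change of the \emph{first} family, not of the presence family; the identification $\mathfrak{K}_{top}^{(2)}(E:\tilde y_1,\dots,\tilde y_t,w_1,\dots,w_s)=\mathfrak{K}_{top}^{(2)}(E:w_1,\dots,w_s)$ you want is true because each $\tilde y_i$ is an \emph{exact} polynomial in $w_1,\dots,w_s$ (substitute $h_i(Z)$ into the test polynomials and use Remark \ref{indd}(1)), but it needs this separate substitution argument rather than an appeal to \ref{ind}; note it works precisely because no $\delta$ appears, which is exactly what fails in the approximate step above. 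Second, (4) $\Rightarrow$ (5) is not "essentially the definition": it again moves witnesses from one generating set into another and needs the same missing transfer. Since the paper omits its proof and defers to Theorem 3.3 of [HW], your write-up must be supplemented by a lemma that either places witnesses inside $C^{\ast}(F)$ for a single finite $F\subseteq\mathcal{G}$ or otherwise justifies interchanging the limit $\omega\to0^{+}$ with the exhaustion of $\mathcal{G}$ by finite subsets; no such ingredient is in your sketch.
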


\begin{remark}
\label{li3}If $\mathcal{A}$ is finite generated, then $\mathfrak{K}%
_{top}^{\left(  3\right)  }\left(  \mathcal{A}\right)  =0$ if
\[
\mathfrak{K}_{top}^{\left(  2\right)  }\left(  \mathcal{A}\right)  =0.
\]

\end{remark}

\begin{corollary}
Suppose $\mathcal{A}$ is a C*-algebra, $\mathcal{G}$ is a generating set of
$\mathcal{A}$. Then $\mathfrak{K}_{top}^{\left(  3\right)  }\left(
\mathcal{A}\right)  =\mathfrak{K}_{top}^{\left(  3\right)  }\left(
\mathcal{G}\right)  .$
\end{corollary}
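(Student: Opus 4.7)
The plan is to exploit the two crucial features of $\mathfrak{K}_{top}^{(3)}$: first, that it takes only the values $0$ and $\infty$, and second, that the preceding Theorem \ref{hi} already characterizes when $\mathfrak{K}_{top}^{(3)}(\mathcal{A})=0$ in terms of generating sets. Together, these two facts reduce the stated equality to a simple two-case argument.

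First I would verify that $\mathfrak{K}_{top}^{(3)}$ is $\{0,\infty\}$-valued. By the defining formula $\mathfrak{K}_{top}^{(3)}(x_{1},\ldots,x_{n};\mathcal{G})=\inf\{\infty\cdot\mathfrak{K}_{top}^{(2)}(x_{1},\ldots,x_{n}:y_{1},\ldots,y_{t})\}$, using the convention $\infty\cdot 0=0$, each term inside the infimum is either $0$ (when the corresponding $\mathfrak{K}_{top}^{(2)}$ vanishes) or $\infty$. Hence both $\mathfrak{K}_{top}^{(3)}(E:F)$ and, after taking the outer $\sup_E\inf_F$, the quantity $\mathfrak{K}_{top}^{(3)}(\mathcal{G})$ lie in $\{0,\infty\}$. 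The same applies with $\mathcal{G}$ replaced by $\mathcal{A}$.

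Since both $\mathfrak{K}_{top}^{(3)}(\mathcal{A})$ and $\mathfrak{K}_{top}^{(3)}(\mathcal{G})$ take only the values $0$ and $\infty$, proving the equality amounts to showing they vanish simultaneously. If $\mathfrak{K}_{top}^{(3)}(\mathcal{A})=0$, the implication $(1)\Rightarrow(3)$ of Theorem \ref{hi} gives $\mathfrak{K}_{top}^{(3)}(\mathcal{G})=0$ for every generating set, in particular for the one at hand. Conversely, if $\mathfrak{K}_{top}^{(3)}(\mathcal{G})=0$ for the given generating $\mathcal{G}$, then the implication $(4)\Rightarrow(1)$ of Theorem \ref{hi} yields $\mathfrak{K}_{top}^{(3)}(\mathcal{A})=0$. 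Combining both directions we conclude $\mathfrak{K}_{top}^{(3)}(\mathcal{A})=\mathfrak{K}_{top}^{(3)}(\mathcal{G})$.

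There is essentially no obstacle here once Theorem \ref{hi} is in hand; the main conceptual point is simply recognizing that the $\infty\cdot$ convention forces the invariant to be $\{0,\infty\}$-valued, so the rich equivalences of Theorem \ref{hi} immediately upgrade to an equality statement. The only mild care needed is to ensure the ambient algebra is MF (so that $\mathfrak{K}_{top}^{(3)}$ is defined at all and Theorem \ref{hi} applies), which is implicit in the framework set up earlier in the section.
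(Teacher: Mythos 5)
Your argument is correct and is essentially the intended one: the paper states this corollary without proof precisely because, as you observe, the $\infty\cdot$ convention makes $\mathfrak{K}_{top}^{(3)}$ take only the values $0$ and $\infty$, so the equality reduces to the equivalences $(1)\Leftrightarrow(3)\Leftrightarrow(4)$ of Theorem \ref{hi}. Your closing remark about the MF hypothesis being implicit is the right caveat, since Theorem \ref{hi} is stated for MF algebras.
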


\begin{corollary}
\label{7}Suppose $\left\{  \mathcal{A}_{l}\right\}  _{l\in\Lambda}$ is an
increasingly directed family of C*-algebras. Then
\[
\mathfrak{K}_{top}^{\left(  3\right)  }\left(  \cup\mathcal{A}_{l}\right)
\leq\lim\inf_{l}\mathfrak{K}_{top}^{\left(  3\right)  }\left(  \mathcal{A}%
_{l}\right)  .
\]

\end{corollary}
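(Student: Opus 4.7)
The plan rests on the observation that, by its very definition, $\mathfrak{K}_{top}^{(3)}$ takes only the values $0$ and $\infty$ (because of the $\infty\cdot$ factor), so the inequality is nontrivial precisely when $\liminf_{l}\mathfrak{K}_{top}^{(3)}(\mathcal{A}_{l})=0$; in that case it suffices to show $\mathfrak{K}_{top}^{(3)}(\mathcal{A})=0$. I would therefore introduce
\[
\Lambda_{0}:=\{\,l\in\Lambda:\mathfrak{K}_{top}^{(3)}(\mathcal{A}_{l})=0\,\},
\]
and observe that the $\liminf=0$ hypothesis is equivalent to $\Lambda_{0}$ being cofinal in $\Lambda$. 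A short argument---pick any upper bound in $\Lambda$ of two given elements of $\Lambda_{0}$, then push it into $\Lambda_{0}$ using cofinality---shows that $\Lambda_{0}$ is itself directed.

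Next I would set $\mathcal{G}:=\bigcup_{l\in\Lambda_{0}}\mathcal{A}_{l}$. By cofinality, $\bigcup_{l\in\Lambda_{0}}\mathcal{A}_{l}=\bigcup_{l\in\Lambda}\mathcal{A}_{l}$ as a subset of $\mathcal{A}$; its norm closure is all of $\mathcal{A}$, so $\mathcal{G}$ is a generating subset of $\mathcal{A}$. The core step is to verify that $\mathfrak{K}_{top}^{(3)}(\mathcal{G})=0$: given any finite $E\subseteq\mathcal{G}$, directedness of $\Lambda_{0}$ provides some $l\in\Lambda_{0}$ with $E\subseteq\mathcal{A}_{l}$, and then condition (2) of Theorem \ref{hi} applied inside $\mathcal{A}_{l}$ produces a finite set $F\subseteq\mathcal{A}_{l}\subseteq\mathcal{G}$ with $\mathfrak{K}_{top}^{(2)}(E:F)=0$, hence $\mathfrak{K}_{top}^{(3)}(E:F)=\infty\cdot 0=0$. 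Taking $\sup_{E}\inf_{F}$ gives $\mathfrak{K}_{top}^{(3)}(\mathcal{G})=0$.

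Finally I would invoke the preceding corollary, which equates $\mathfrak{K}_{top}^{(3)}(\mathcal{A})$ with $\mathfrak{K}_{top}^{(3)}(\mathcal{G})$ for any generating subset $\mathcal{G}$; this yields $\mathfrak{K}_{top}^{(3)}(\mathcal{A})=0$ and completes the inequality. The only delicate point is the passage from the numerical hypothesis $\liminf=0$ to a genuinely directed cofinal subfamily with $\mathfrak{K}_{top}^{(3)}=0$ whose union remains dense in $\mathcal{A}$; once that bookkeeping is done, the argument is a direct application of the finite-subset characterization packaged in Theorem \ref{hi}.
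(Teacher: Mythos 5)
Your proposal is correct and follows essentially the same route as the paper: reduce to the case $\liminf_{l}\mathfrak{K}_{top}^{(3)}(\mathcal{A}_{l})=0$, use directedness plus cofinality of the subfamily with vanishing invariant to place any finite subset inside some $\mathcal{A}_{\lambda}$ with $\mathfrak{K}_{top}^{(3)}(\mathcal{A}_{\lambda})=0$, and then apply the finite-subset characterization in Theorem \ref{hi}. Your extra bookkeeping (the $\{0,\infty\}$-valuedness of $\mathfrak{K}_{top}^{(3)}$, directedness of $\Lambda_{0}$, and passing through the generating set $\mathcal{G}$) only makes explicit what the paper's shorter proof leaves implicit.
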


\begin{proof}
If $\lim\inf_{l}\mathfrak{K}_{top}^{\left(  3\right)  }\left(  \mathcal{A}%
_{l}\right)  =\mathcal{1}$, the inequality holds clearly. Suppose that
$\lim\inf_{l}\mathfrak{K}_{top}^{\left(  3\right)  }\left(  \mathcal{A}%
_{l}\right)  =0.$ Let $x_{1},\cdots,x_{n}\in\cup\mathcal{A}_{l}.$ Then we can
find a $\lambda\in\Lambda$ such that $x_{1},\cdots,x_{n}\in\mathcal{A}%
_{\lambda}$ and $\mathfrak{K}_{top}^{\left(  3\right)  }\left(  \mathcal{A}%
_{l}\right)  =0.$ Therefore, we can find $y_{1},\cdots,y_{p}\in\mathcal{A}%
_{\lambda}$ with
\[
\mathfrak{K}_{top}^{\left(  3\right)  }\left(  x_{1},\cdots,x_{n}:y_{1}%
,\cdots,y_{p}\right)  =0.
\]
It follows that $\mathfrak{K}_{top}^{\left(  3\right)  }\left(  \cup
\mathcal{A}_{l}\right)  =0$ by Theorem \ref{hi} (6).
\end{proof}

We modify the $\mathfrak{K}_{3}$ in \cite{HW} by using the modified free orbit
dimension $\mathfrak{K}_{2}^{(2)}.$

\begin{definition}
Let $\mathcal{M}$ be a von Neumann algebra with a tracial state $\tau,$ and
let $x_{1},\cdots,x_{n}$ be self-adjoint elements in $\mathcal{M}$. Define
\begin{align*}
&  \mathfrak{K}_{3}^{(3)}(x_{1},\cdots,x_{n};\mathcal{G)}\\
&  \mathcal{=}\inf\left\{  \mathcal{1\cdot}\mathfrak{K}_{2}^{(2)}(x_{1}%
,\cdots,x_{n}:y_{1},\cdots,y_{t}):\left\{  y_{1},\cdots,y\right\}  \text{ is a
finite subset of }\mathcal{G}\right\}
\end{align*}
and%
\[
\mathfrak{K}_{3}^{(3)}(\mathcal{G)}\mathcal{=}\sup_{\substack{E\subseteq
\mathcal{G}\\E\text{ is finite}}}\inf_{\substack{F\subseteq\mathcal{G}%
\\F\text{ is finite}}}\mathfrak{K}_{3}^{(3)}(E:F)
\]

\end{definition}

\begin{remark}
\label{88}Note that $\mathfrak{K}_{3}\left(  \mathcal{G}\right)  =0,$ then
$\mathfrak{K}_{3}^{\left(  3\right)  }\left(  \mathcal{G}\right)  =0$ by
Remark \ref{inddd}.
\end{remark}

\begin{theorem}
\label{k3top}Let $\mathcal{A}$ be an MF-algebra and $\mathcal{G}$ be a
generating set of $\mathcal{A}$. Then
\[
\mathfrak{K}_{top}^{\left(  3\right)  }\left(  \mathcal{G}\right)  \leq
\sup_{\tau\in\mathcal{TS}\left(  \mathcal{A}\right)  }\mathfrak{K}_{3}%
^{(3)}(\mathcal{G}:\tau)
\]

\end{theorem}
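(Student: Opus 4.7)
The plan is to adapt the strategy of Theorem \ref{li} and combine it with a compactness argument on the tracial state space. Since both $\mathfrak{K}_{top}^{(3)}$ and $\mathfrak{K}_3^{(3)}$ take values in $\{0,\infty\}$ (under the convention $\infty\cdot 0 = 0$), it suffices to prove the contrapositive: if $\mathfrak{K}_{top}^{(3)}(\mathcal{G}) = \infty$, then $\sup_\tau \mathfrak{K}_3^{(3)}(\mathcal{G}:\tau) = \infty$. Unpacking this, we are given a finite $E \subseteq \mathcal{G}$ with $\mathfrak{K}_{top}^{(2)}(E:F) > 0$ for every finite $F \subseteq \mathcal{G}$, and must produce a \emph{single} tracial state $\tau$ (and a finite $E'$, for which $E' = E$ works) such that $\mathfrak{K}_2^{(2)}(E:F:\tau) > 0$ for every finite $F \subseteq \mathcal{G}$.

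After reducing to the separable setting by passing to the $C^*$-subalgebra generated by a countable cofinal family of finite subsets, I enumerate an exhaustion $E = F_0 \subseteq F_1 \subseteq F_2 \subseteq \cdots$ whose union generates. For each $N$, since $\mathfrak{K}_{top}^{(2)}(E:F_N) > 0$, the (quantitative) proof of Theorem \ref{li} supplies an MF tracial state $\tau_N$ with $\mathfrak{K}_2^{(2)}(E:F_N:\tau_N) > 0$. By weak-$\ast$ compactness and metrizability of $\mathcal{TS}(\mathcal{A})$ on the separable subalgebra, I extract $\tau_{N_j} \to \tau$; the weak-$\ast$ closedness of $\mathcal{T}_{MF}(\mathcal{A})$ shows $\tau$ is itself an MF-trace. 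For each fixed finite $F$, once $F \subseteq F_{N_j}$, monotonicity of $\mathfrak{K}_2^{(2)}$ in the second slot gives $\mathfrak{K}_2^{(2)}(E:F:\tau_{N_j}) \geq \mathfrak{K}_2^{(2)}(E:F_{N_j}:\tau_{N_j})$, and upper semicontinuity of $\tau \mapsto \mathfrak{K}_2^{(2)}(E:F:\tau)$ in the weak-$\ast$ topology — which follows from the inclusion $\Gamma_R(\,\cdot\,;m,k,\varepsilon;\tau') \subseteq \Gamma_R(\,\cdot\,;m,k,2\varepsilon;\tau)$ for $\tau'$ sufficiently close to $\tau$ — transfers a uniform positive lower bound along the subsequence to $\mathfrak{K}_2^{(2)}(E:F:\tau) > 0$.

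The main obstacle is obtaining that uniform positive lower bound on $\mathfrak{K}_2^{(2)}(E:F_{N_j}:\tau_{N_j})$: the quantities $\mathfrak{K}_{top}^{(2)}(E:F_N)$ are a priori only nonincreasing and positive, and may collapse to $0$, in which case the pointwise application of Theorem \ref{li} produces $\tau_N$'s with individually positive but degenerating lower bounds, from which no limit $\tau$ can be guaranteed to inherit positivity. To overcome this I would perform the pigeonhole/binning construction in the proof of Theorem \ref{li} simultaneously across all $N$: list microstates $\eta_{N,k} \in \Gamma_R^{(top)}(E,F_N;k,\varepsilon_k,P_1,\ldots,P_{r_k})$ as $k \to \infty$ and $\varepsilon_k \to 0$, regard their normalized matricial traces as functionals on the universal free product $\mathcal{A}(\infty)$ on countably many generators, and use a diagonal extraction through finer and finer weak-$\ast$ balls, refined compatibly as $N$ grows, to produce a single limiting tracial state $\tau$ together with, for every $N$, a cofinal sub-family of microstates clustered near $\tau$ whose orbit coverings of the $E$-coordinate projections witness $\mathfrak{K}_2^{(2)}(E:F_N:\tau) > 0$ with non-degenerating bounds. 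This simultaneous diagonal construction — analogous to but strictly stronger than the single-$F$ version in Theorem \ref{li} — is the essential technical point of the proof.
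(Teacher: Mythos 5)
Your outline is the same as the paper's: reduce to the contrapositive using that $\mathfrak{K}_{top}^{(3)}$ and $\mathfrak{K}_{3}^{(3)}$ only take the values $0$ and $\infty$, fix a finite $E\subseteq\mathcal{G}$ with $\mathfrak{K}_{top}^{(2)}(E:F)>0$ for every finite $F\subseteq\mathcal{G}$, exhaust $\mathcal{G}$ by finite sets $F_0\subseteq F_1\subseteq\cdots$, invoke Theorem \ref{li} at each stage to get tracial states $\tau_i$ on $C^{\ast}(E\cup F_i)$ with $\mathfrak{K}_{2}^{(2)}(E:F_i;\tau_i)>0$, use monotonicity in the second slot (the paper's inequality (5.1)), pass to a limiting trace $\tau$ (the paper takes a free-ultrafilter limit through $\Pi_i C^{\ast}(E\cup F_i)/\Sigma_i C^{\ast}(E\cup F_i)$ rather than a metrizable subsequence, and for the stated theorem $\tau$ only needs to be a tracial state, not an MF-trace), and transfer positivity to $\tau$ via inclusions of the form $\Gamma_R(E:F_0;k,\varepsilon,m;\tau_{i_t})\subseteq\Gamma_R(E:F_0;k,\varepsilon_t,m_t;\tau)$. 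Up to these cosmetic differences, your first two paragraphs are the paper's argument.

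The gap is your third paragraph, which you yourself call the essential technical point: it is a description of the desired outcome, not a proof. You correctly note that the stage-$N$ application of Theorem \ref{li} only yields lower bounds of size roughly $\mathfrak{K}_{top}^{(2)}(E:F_N)-\varepsilon_N$, which may collapse to $0$, and that positivity survives the limit only if one has a bound that is uniform along the subsequence and pinned at a fixed scale: for each fixed $(\omega,R)$ the function $\tau\mapsto\inf_{m,\varepsilon}\limsup_k k^{-2}\log o_2\left(\Gamma_R(E:F;m,k,\varepsilon;\tau),\omega\right)$ is upper semicontinuous, but $\mathfrak{K}_{2}^{(2)}$ is the supremum of these over $(\omega,R)$, and a supremum of upper semicontinuous functions need not be upper semicontinuous, so even uniformly positive values $\mathfrak{K}_{2}^{(2)}(E:F_{N_j};\tau_{N_j})$ with drifting witnessing scales would not suffice. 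Your proposed remedy, rerunning the binning construction of Theorem \ref{li} simultaneously in $N$ with a diagonal extraction, is only sketched, and the sketch does not explain where non-degenerating bounds at a fixed scale would come from: for a fixed $F$, the bins available at stage $N$ project onto the $E$-coordinates with covering rates controlled by $\mathfrak{K}_{top}^{(2)}(E:F_N)$ (not by $\mathfrak{K}_{top}^{(2)}(E:F)$), and nothing in the diagonal refinement prevents these rates from tending to $0$ as $N\to\infty$. So your attempt is incomplete exactly at the step you flag. For comparison, the paper performs no simultaneous construction: after forming $\tau$ as an ultrafilter limit it writes a chain of inequalities and asserts the existence of an index $t_0$ with $0<\mathfrak{K}_{2}^{(2)}(E:F_0;\tau_{i_{t_0}})\leq\mathfrak{K}_{2}^{(2)}(E:F_0;\tau)$, i.e., it handles the interchange-of-limits point you identified by fiat rather than by the stronger construction you propose; you have located the delicate step, but your write-up does not close it.
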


\begin{proof}
If $\mathfrak{K}_{top}^{\left(  3\right)  }\left(  \mathcal{G}\right)  =0,$
the inequality holds clearly. Now suppose $\mathfrak{K}_{top}^{\left(
3\right)  }\left(  \mathcal{G}\right)  =\mathcal{1},$then there is a subset
$E\subseteq\mathcal{G}$ such that for every finite subset $F\subseteq
\mathcal{G}$, $\mathfrak{K}_{top}^{\left(  3\right)  }(E:F)=\mathcal{1}$.
Therefore $0<\mathfrak{K}_{top}^{\left(  2\right)  }(E:F)\leq\mathcal{1}$ for
every finite subset $F\subseteq\mathcal{G}$. Let $F$ be an arbitrary finite
subset of $\mathcal{G}$. Then we can find a sequence $\left\{  F_{i}\right\}
_{i=1}^{\mathcal{1}}$ of finite subset of $\mathcal{G}$ with
\[
F=F_{0}\subseteq F_{1}\subseteq\cdots
\]
and $\cup_{i}F_{i}=\mathcal{G}$. Hence
\[
C^{\ast}\left(  E\cup F_{0}\right)  \subseteq C^{\ast}\left(  E\cup
F_{1}\right)  \subseteq\cdots
\]
and%
\[
\overline{\cup_{i}C^{\ast}\left(  E\cup F_{i}\right)  }^{\left\Vert
\cdot\right\Vert }=\mathcal{A}.
\]

\ \ \ Now for each $i,$ $\mathfrak{K}_{top}^{\left(  2\right)  }(E:F_{i})>0$.
So by Theorem \ref{li}, we may find a tracial state $\tau_{i}$ on $C^{\ast
}\left(  E\cup F_{i}\right)  $ such that
\[
\mathfrak{K}_{2}^{\left(  2\right)  }(E:F_{i};\tau_{i})\geq\mathfrak{K}%
_{top}^{\left(  2\right)  }(E:F_{i})-\varepsilon_{i}>0
\]
for some small $\varepsilon_{i}>0$ for each $i\in\mathbb{N}.$ If we regard
$C^{\ast}\left(  E\cup F_{0}\right)  $ as a subalgebra of $C^{\ast}\left(
E\cup F_{i}\right)  $ for every $i\geq1$, then
\begin{equation}
\mathfrak{K}_{2}^{(2)}(E:F_{0}:\tau_{i}\mathcal{)}\geq\mathfrak{K}%
_{2}^{\left(  2\right)  }(E:F_{i};\tau_{i})>0\text{ for every }i\in
\mathbb{N}\text{.} \tag{5.1}\label{(1)}%
\end{equation}
Let
\[
\pi:\overline{\cup_{i}C^{\ast}\left(  E\cup F_{i}\right)  }^{\left\Vert
\cdot\right\Vert }\longrightarrow\Pi_{i\in\mathbb{N}}C^{\ast}\left(  E\cup
F_{i}\right)  /\Sigma_{i\in\mathbb{N}}C^{\ast}\left(  E\cup F_{i}\right)
\]
be the embedding defined by
\[
\pi\left(  A\right)  =\underset{\text{the first }i\text{ positions}%
}{\underbrace{(0,\cdots,0,}}A,A,\cdots)\text{ for every }A\in C^{\ast}\left(
E\cup F_{i}\right)  .
\]
and $\widetilde{\tau}$ be the tracial state define by $\widetilde{\tau}\left(
\left[  A_{i}\right]  _{i}\right)  =\lim_{i\longrightarrow\alpha}\tau
_{i}(A_{i})$ on
\[
\Pi_{i\in\mathbb{N}}C^{\ast}\left(  E\cup F_{i}\right)  /\Sigma_{i\in
\mathbb{N}}C^{\ast}\left(  E\cup F_{i}\right)
\]
where $\alpha$ is a free ultrafilter. Define $\tau=\widetilde{\tau}\circ\pi$,
then $\tau$ is a tracial state on $\mathcal{A}.$ Note for any finite subset
$G$ of $\mathcal{G}$, we can always find a suitable index $i$ such that
\[
G\subseteq F_{i}\subseteq F_{i+1}\subseteq\cdots.
\]
Therefore $\widetilde{\tau}$ is irrelevant to the selection of finite subset
$F=F_{0},$ so is $\tau.$ Let $\left\{  \varepsilon_{t}\right\}  $ be a
decreasing sequence of positive number with $\lim_{t\rightarrow\mathcal{1}%
}\varepsilon_{t}=0$ and $\left\{  m_{t}\right\}  _{t=1}^{\mathcal{1}}$ be an
increasing sequence of integers with $\lim_{t\rightarrow\mathcal{1}}%
m_{t}=\mathcal{1}$. Then, for every $R>0,$ we can find a subsequence $\left\{
i_{t}\right\}  _{t=1}^{\mathcal{1}}$ of integers such that when $\varepsilon$
is small enough and $m$ is big enough, we always have
\[
\Gamma_{R}\left(  E:F_{0};k,\varepsilon,m;\tau_{i_{t}}\right)  \subseteq
\Gamma_{R}\left(  E:F_{0};k,\varepsilon_{t},m_{t};\tau\right)  \text{ .}%
\]
It implies that, for any $\omega>0,$%
\begin{align*}
&  \inf_{\varepsilon,m}\underset{k\rightarrow\mathcal{1}}{\lim\sup}\frac
{\log\left(  o_{2}\left(  \Gamma_{R}\left(  E:F_{0};k,\varepsilon
,m;\tau_{i_{t}}\right)  ,\omega\right)  \right)  }{k^{2}}\\
&  \leq\underset{k\rightarrow\mathcal{1}}{\lim\sup}\frac{\log\left(
o_{2}\left(  \Gamma_{R}\left(  E:F_{0};k,\varepsilon_{t},m_{t};\tau\right)
,\omega\right)  \right)  }{k^{2}}\text{ ,}%
\end{align*}
it follows that%
\begin{align*}
&  \underset{t\rightarrow\mathcal{1}}{\lim\sup}\inf_{\varepsilon,m}%
\underset{k\rightarrow\mathcal{1}}{\lim\sup}\frac{\log\left(  o_{2}\left(
\Gamma_{R}\left(  E:F_{0};k,\varepsilon,m;\tau_{i_{t}}\right)  ,\omega\right)
\right)  }{k^{2}}\\
&  \leq\underset{t\rightarrow\mathcal{1}}{\lim\sup}\underset{k\rightarrow
\mathcal{1}}{\lim\sup}\frac{\log\left(  o_{2}\left(  \Gamma_{R}\left(
E:F_{0};k,\varepsilon_{t},m_{t};\tau\right)  ,\omega\right)  \right)  }{k^{2}%
}\text{ ,}%
\end{align*}
Therefore we can find an index $t_{0}$ such that
\begin{align*}
0  &  <\mathfrak{K}_{2}^{(2)}(E:F_{0}:\tau_{i_{t_{0}}}\mathcal{)=}%
\sup_{0<\omega<1}\sup_{R>0}\inf_{\varepsilon,m}\underset{k\rightarrow
\mathcal{1}}{\lim\sup}\frac{\log\left(  o_{2}\left(  \Gamma_{R}\left(
E:F_{0};k,\varepsilon,m;\tau_{i_{t_{0}}}\right)  ,\omega\right)  \right)
}{k^{2}}\\
&  \mathcal{\leq}\sup_{0<\omega<1}\sup_{R>0}\underset{t\rightarrow\mathcal{1}%
}{\lim\sup}\inf_{\varepsilon,m}\underset{k\rightarrow\mathcal{1}}{\lim\sup
}\frac{\log\left(  o_{2}\left(  \Gamma_{R}\left(  E:F_{0};k,\varepsilon
,m;\tau_{i_{t}}\right)  ,\omega\right)  \right)  }{k^{2}}\\
&  \leq\sup_{0<\omega<1}\sup_{R>0}\underset{t\rightarrow\mathcal{1}}{\lim\sup
}\underset{k\rightarrow\mathcal{1}}{\lim\sup}\frac{\log\left(  o_{2}\left(
\Gamma_{R}\left(  E:F_{0};k,\varepsilon_{t},m_{t};\tau\right)  ,\omega\right)
\right)  }{k^{2}}\text{ }\\
&  =\sup_{0<\omega<1}\sup_{R>0}\inf_{\varepsilon_{t},m_{t}}\underset
{k\rightarrow\mathcal{1}}{\lim\sup}\frac{\log\left(  o_{2}\left(  \Gamma
_{R}\left(  E:F_{0};k,\varepsilon_{t},m_{t};\tau\right)  ,\omega\right)
\right)  }{k^{2}}\\
&  =\mathfrak{K}_{2}^{(2)}(E:F_{0}:\tau\mathcal{)}%
\end{align*}
Note that $F_{0}=F$ is an arbitrary subset of $\mathcal{G}$. It implies that
\begin{align*}
\mathfrak{K}_{3}^{(3)}(\mathcal{G}  &  :\tau)=\sup_{\substack{E\subseteq
\mathcal{G}\\E\text{ is finite}}}\inf_{\substack{F\subseteq\mathcal{G}%
\\F\text{ is finite}}}\mathfrak{K}_{3}^{(3)}(E:F;\tau)\\
&  =\sup_{\substack{E\subseteq\mathcal{G}\\E\text{ is finite}}}\inf
_{\substack{F\subseteq\mathcal{G}\\F\text{ is finite}}}\left\{
\mathcal{1\cdot}\mathfrak{K}_{2}^{(2)}(E:F:\tau\mathcal{)}\right\}
\mathcal{=1}\text{.}%
\end{align*}
It follows that
\[
\mathfrak{K}_{top}^{\left(  3\right)  }\left(  \mathcal{G}\right)  \leq
\sup_{\tau\in\mathcal{TS}\left(  \mathcal{A}\right)  }\mathfrak{K}_{3}%
^{(3)}(\mathcal{G}:\tau).
\]

\end{proof}

\begin{lemma}
(\cite{HS2})\label{1} Suppose $x$ is a self-adjoint element in a unital
C*-algebra $\mathcal{A}$. Let $\sigma(x)$ be the spectrum of x in
$\mathcal{A}$ and $R>\left\Vert x\right\Vert .$ For any $\varepsilon>0,$ we
have the following results.

\begin{enumerate}
\item There are some integer $n\geq1$ and distinct real numbers $\lambda
_{1},\cdots,\lambda_{n}$ in $\sigma\left(  x\right)  $ satisfying

\begin{enumerate}
\item $\left\vert \lambda_{i}-\lambda_{j}\right\vert \geq\omega$ for all
$1\leq i\neq j\leq n;$ and

\item for any $\lambda\in\sigma\left(  x\right)  ,$ there is some $\lambda
_{j}$ with $1\leq j\leq n$ such that $\left\vert \lambda-\lambda
_{j}\right\vert \leq\omega.$
\end{enumerate}

\item There is some $r_{0}\in\mathbb{N}$ such that the following holds: when
$r\geq r_{0}$ for any $k\in\mathbb{N}$ and any $A\in\Gamma_{R}^{\left(
top\right)  }\left(  x;k,r,\frac{1}{r}\right)  ,$ there are positive integers
$1\leq k_{1},\cdots,k_{n}\leq k$ with $k_{1}+\cdots+k_{n}=k$ and a unitary
matrix $U\in\mathcal{U}\left(  k\right)  $ satisfying
\[
\left\Vert U^{\ast}AU-\left(
\begin{array}
[c]{cccc}%
\lambda_{1}I_{k_{1}} & 0 & 0 & 0\\
0 & \lambda_{2}I_{k_{2}} & 0 & 0\\
0 & 0 & \ddots & 0\\
0 & 0 & 0 & \lambda_{n}I_{k_{n}}%
\end{array}
\right)  \right\Vert \leq2\omega
\]
where $I_{k_{j}}$ is the $k_{j}\times k_{j}$ identity matrix in $\mathcal{M}%
_{k_{j}}\left(  \mathbb{C}\right)  $ for $1\leq j\leq n.$
\end{enumerate}
\end{lemma}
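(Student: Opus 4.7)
The plan is to prove the two parts separately: (1) reduces to compactness of $\sigma(x)$, and (2) combines the continuous functional calculus with Weierstrass approximation to convert the norm-microstate condition into spectral information about $A$.

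For (1), since $x$ is self-adjoint, $\sigma(x)\subseteq[-\|x\|,\|x\|]$ is a compact subset of $\mathbb{R}$. I would choose a maximal $\omega$-separated subset $\{\lambda_{1},\ldots,\lambda_{n}\}$ of $\sigma(x)$. Finiteness is automatic from total boundedness, (a) holds by construction, and (b) holds by maximality: any $\lambda\in\sigma(x)$ at distance $>\omega$ from every $\lambda_{j}$ could be adjoined to the set without violating separation, contradicting maximality.

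For (2), the key is to convert the microstate bound $\bigl|\,\|P_{j}(A)\|-\|P_{j}(x)\|\,\bigr|\le 1/r$ into a spectral comparison. I would first build, by piecewise-linear interpolation on $[-R,R]$, a continuous function $f$ that vanishes on the union $S_{\omega}$ of closed $\omega$-balls around the $\lambda_{j}$'s and equals $1$ outside the union $S_{3\omega/2}$ of $(3\omega/2)$-balls. Since (1)(b) forces $\sigma(x)\subseteq S_{\omega}$, the continuous functional calculus gives $f(x)=0$. By Weierstrass I approximate $f$ uniformly on $[-R,R]$ by a polynomial $p$ with $\|p-f\|_{\infty,[-R,R]}<\eta$ for a chosen small $\eta$, so $\|p(x)\|\le\eta$. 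Enlarging $r_{0}$ so that $p$ (or a rational-coefficient perturbation) appears among $Q_{1},\ldots,Q_{r}$ and $1/r<\eta$ whenever $r\ge r_{0}$, any $A\in\Gamma_{R}^{(\mathrm{top})}(x;k,r,1/r)$ satisfies $\|p(A)\|<2\eta$. Because $A$ is self-adjoint with $\|A\|\le R$, every eigenvalue $\mu$ of $A$ lies in $[-R,R]$ and satisfies $|p(\mu)|\le\|p(A)\|<2\eta$; taking $\eta<1/3$ rules out $f(\mu)=1$, so $\mu\in S_{3\omega/2}$ and $|\mu-\lambda_{j(\mu)}|\le 3\omega/2<2\omega$ for some index $j(\mu)$. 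I would then diagonalize $A=VDV^{\ast}$ with $D=\operatorname{diag}(\mu_{1},\ldots,\mu_{k})$, reorder so that the first $k_{1}$ eigenvalues are assigned to $\lambda_{1}$, the next $k_{2}$ to $\lambda_{2}$, and so on, and compose $V$ with the reordering permutation to obtain $U$; replacing each $\mu_{i}$ by $\lambda_{j(\mu_{i})}$ on the diagonal moves each entry by at most $3\omega/2$, producing the displayed operator norm bound.

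The main obstacle I anticipate is ensuring each block size $k_{j}$ is strictly positive, as the statement requires. To secure this I would run a parallel argument at each $\lambda_{j}$: pick a continuous bump $g_{j}$ supported in the open $\omega$-ball about $\lambda_{j}$ with $g_{j}(\lambda_{j})=1$; since $\lambda_{j}\in\sigma(x)$, $\|g_{j}(x)\|\ge 1$, and polynomial approximation (enlarging $r_{0}$ once more, uniformly in the $n$ bumps and in $f$) forces $\|g_{j}(A)\|\ge 1/2>0$. Hence $g_{j}(A)\ne 0$, and by functional calculus at least one eigenvalue of $A$ must lie in the support of $g_{j}$, i.e.\ within $\omega$ of $\lambda_{j}$; assigning such an eigenvalue to the $j$-th block yields $k_{j}\ge 1$. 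In particular $k\ge n$ whenever $\Gamma_{R}^{(\mathrm{top})}(x;k,r,1/r)$ is nonempty for $r\ge r_{0}$, and for smaller $k$ the claim is vacuous. The remaining bookkeeping — combining the single vanishing polynomial $p$ with the $n$ bump polynomials into one choice of $r_{0}$, and handling rational-coefficient perturbations — is routine.
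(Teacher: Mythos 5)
Part (1) is correct as you argue it, and the overall strategy for part (2) (a vanishing function on a neighborhood of $\sigma(x)$ plus bump functions, both fed into the microstate condition via rational-coefficient polynomial approximants) is the right one; note the paper itself only quotes this lemma from \cite{HS2}, so there is no in-paper proof to compare against. The genuine gap is in the final step you dismiss as routine bookkeeping: the claim that every block can be made nonempty, equivalently that $k\ge n$ whenever the microstate set is nonempty. Your witnesses come from bumps $g_j$ supported in the open $\omega$-ball about $\lambda_j$, but the $\lambda_j$ are only guaranteed to be $\omega$-separated, so adjacent supports can overlap and a single eigenvalue of $A$ can serve as the witness for several indices at once. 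The only facts your test functions extract are: (a) every eigenvalue of $A$ lies within $3\omega/2$ of some $\lambda_j$, and (b) for each $j$ there is an eigenvalue within $\omega$ of $\lambda_j$. These do not imply an assignment with all $k_j\ge 1$: take $\lambda_1=0$, $\lambda_2=\omega$, $\lambda_3=2\omega$ and a self-adjoint matrix whose only eigenvalues are $\omega/2$ and $3\omega/2$; then (a) and (b) hold while $k=2<n=3$, so no decomposition of the required form exists. Thus the conclusion does not follow from the constraints you actually imposed; ruling out such an $A$ from the microstate set requires finer test functions than the ones you chose.

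The repair is small. Use bumps supported in balls of radius at most $\omega/3$ about each $\lambda_j$: since $\left\vert\lambda_i-\lambda_j\right\vert\ge\omega$ for $i\ne j$, these supports are pairwise disjoint, so the witness eigenvalues for distinct $j$ are automatically distinct. Assign each witness to its own block (displacement at most $\omega/3$), distribute the remaining eigenvalues to any $\lambda_j$ within $3\omega/2$ using your first estimate, and you obtain $k_j\ge 1$ for all $j$, $k\ge n$, and the stated bound $\le 2\omega$. With that modification (and the same device of replacing $f$ and the bumps by nearby rational-coefficient polynomials to fix $r_0$), your proof is complete.
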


\begin{lemma}
(\cite{DH})\label{2} If $\mathbb{B}$ is the unit ball in $\mathbb{R}^{m},$
then, with respect to the Euclidean metric
\[
\left(  \frac{1}{\omega}\right)  ^{m}\leq v\left(  \mathbb{B},\omega\right)
\leq\left(  \frac{3}{\omega}\right)  ^{m}%
\]

\end{lemma}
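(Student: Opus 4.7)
The plan is to establish both inequalities by elementary volume comparison in $\mathbb{R}^m$, which is the standard technique for covering-number estimates of convex bodies. Throughout I tacitly restrict to $0<\omega\le 1$; otherwise a single covering ball suffices and the stated bounds are either vacuous or trivially hold.

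For the lower bound I would invoke $m$-dimensional Lebesgue volume directly. If $B(x_1,\omega),\ldots,B(x_N,\omega)$ is any covering of $\mathbb{B}$, then
\[
V_m \;=\; \mathrm{vol}(\mathbb{B}) \;\le\; \sum_{i=1}^N \mathrm{vol}(B(x_i,\omega)) \;=\; N\,\omega^m\, V_m,
\]
where $V_m$ denotes the volume of the unit Euclidean ball in $\mathbb{R}^m$. Dividing through yields $N\ge(1/\omega)^m$, so $v(\mathbb{B},\omega)\ge(1/\omega)^m$.

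For the upper bound I would construct an explicit covering from a maximal $\omega$-separated net: pick $\{x_1,\ldots,x_N\}\subseteq\mathbb{B}$ maximal subject to $\|x_i-x_j\|\ge\omega$ for $i\ne j$. By maximality, every point of $\mathbb{B}$ lies within distance $\omega$ of some $x_i$, so the balls $B(x_i,\omega)$ cover $\mathbb{B}$; moreover the centres lie in $\mathbb{B}$, so this is admissible under the paper's covering convention. The balls $B(x_i,\omega/2)$ are pairwise disjoint and all contained in $B(0,1+\omega/2)\subseteq B(0,3/2)$, hence
\[
N\,(\omega/2)^m V_m \;=\; \sum_{i=1}^N \mathrm{vol}(B(x_i,\omega/2)) \;\le\; \mathrm{vol}(B(0,3/2)) \;=\; (3/2)^m V_m,
\]
which rearranges to $N\le(3/\omega)^m$ and bounds the minimal covering number from above.

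There is essentially no real obstacle; the argument is just two volume inequalities. The only small point to watch is matching the paper's convention that covering balls are centred in the set being covered, which is built into the maximal-net construction for the upper bound and which, for the lower bound, holds for the minimizing covering by definition of $v(\mathbb{B},\omega)$.
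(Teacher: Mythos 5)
Your proposal is correct: both volume-comparison arguments (Lebesgue measure for the lower bound, a maximal $\omega$-separated net packed by disjoint $\omega/2$-balls inside $B(0,3/2)$ for the upper bound) are sound, and you rightly note that the centres-in-the-set convention is automatic for the net construction. The paper does not prove this lemma at all — it simply quotes it from \cite{DH} — and your argument is the standard one underlying that reference, so there is nothing to compare beyond saying your proof fills in the omitted details correctly.
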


\begin{lemma}
\label{new}Let $\left\{  \lambda_{1},\cdots,\lambda_{k}\right\}
\subseteq\lbrack0,1]$ with $\lambda_{1}=\frac{1}{2k}$ and $\left\vert
\lambda_{i}-\lambda_{i+1}\right\vert =\frac{1}{k}$for $i=1,\cdots,k-1$. Assume
$D_{1}$ and $D_{2}$ are diagonal matrices in $\mathcal{M}_{k}\left(
\mathbb{C}\right)  $ with diagonal entries are all from $\left\{  \lambda
_{1},\cdots,\lambda_{k}\right\}  $ without repetition. For every $\delta>0,$
let%
\[
\Omega(D_{1},D_{2};\delta)=\left\{  U\in\mathcal{U}\left(  k\right)
|\left\Vert UD_{1}-D_{2}U\right\Vert _{2}\leq\delta\right\}  .
\]
Then, for every $0<\delta<r,$ there exists a set $\left\{  Ball\left(
U_{\lambda};\frac{4\delta}{r}\right)  \right\}  _{\lambda\in\Lambda}$ of
$\frac{4\delta}{r}$-balls in $\mathcal{U}\left(  k\right)  $ that cover
$\Omega(D_{1},D_{2};\delta)$ with the cardinality of $\Lambda$ satisfying
$\left\vert \Lambda\right\vert \leq\left(  \frac{3r}{2\delta}\right)
^{4rk^{2}}$
\end{lemma}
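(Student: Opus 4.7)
The plan is to reduce to the case $D_1 = D_2 = D$, exploit the resulting entrywise commutator estimate, and build a product grid on the entries of $U$.

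First, since $D_1$ and $D_2$ share the spectrum $\{\lambda_1, \ldots, \lambda_k\}$ with no repetitions, there exist permutation matrices $P_1, P_2 \in \mathcal{U}(k)$ with $P_i^\ast D_i P_i = D$. The map $U \mapsto P_2^\ast U P_1$ is an isometry of $\mathcal{U}(k)$ in every unitarily bi-invariant norm (in particular $\|\cdot\|$ and $\|\cdot\|_2$), and using $P_1 D = D_1 P_1$ and $D P_2^\ast = P_2^\ast D_2$ one checks that it carries $\Omega(D_1, D_2; \delta)$ onto $\Omega(D, D; \delta)$. Thus it suffices to prove the claim for $D_1 = D_2 = D$.

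Second, from $(UD - DU)_{ij} = u_{ij}(\lambda_j - \lambda_i)$ and $\|UD - DU\|_2 \leq \delta$, I would extract the key estimate $\sum_{i,j}|u_{ij}|^2(\lambda_j - \lambda_i)^2 \leq k\delta^2$. I would then split the index set into a near-diagonal part $N = \{(i,j): |i - j| < r\}$ (at most $2rk$ pairs, corresponding to $|\lambda_i - \lambda_j| < r/k$) and a far part $F$ (where $|\lambda_i - \lambda_j| \geq r/k$); the estimate forces $\sum_{(i,j) \in F} |u_{ij}|^2 \leq k^3 \delta^2 / r^2$, so the far entries jointly lie in a Euclidean ball of radius $k^{3/2}\delta/r$ in $\mathbb{C}^{|F|}$. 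Lemma \ref{2} then supplies coverings of (a) each near-diagonal entry in the unit disk of $\mathbb{C}$, and (b) the far-entries' joint vector in the Euclidean ball; taking the product yields a cover of $\Omega(D, D; \delta)$.

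The main obstacle is the bookkeeping that matches the claimed cardinality and radius exactly. One must select per-entry scales $\epsilon_1$ (near) and $\epsilon_2$ (far) so that the combined $\|\cdot\|_2$-error of any grid point is at most $\frac{4\delta}{r}$ while the product covering number $(3/\epsilon_1)^{2|N|} \cdot (3k^{3/2}\delta/(r\epsilon_2))^{2|F|}$ does not exceed $\left(\frac{3r}{2\delta}\right)^{4rk^2}$. Matching the exponent $4rk^2$ uses $2|N| \leq 4rk$ near factors together with a relatively coarse far-entry discretization whose contribution is absorbed into the base, and matching the base $\frac{3r}{2\delta}$ forces $\epsilon_1 \sim \delta/r$ modulated by a factor depending on $r, k$ that absorbs the Hilbert-Schmidt-to-$\|\cdot\|_2$ normalization. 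Calibrating these two scales against the commutator constraint is the technical heart of the lemma.
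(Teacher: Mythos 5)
Your reduction to $D_1=D_2=D$ is fine and matches the paper, but the near/far decomposition is set at the wrong scale, and this is fatal to the calibration you defer to the end. You cut at index distance $|i-j|<r$, i.e.\ at eigenvalue gap $|\lambda_i-\lambda_j|<r/k$; since $r<1$ this "near" set is just the diagonal (so its cardinality is $k$, not $2rk$, and its real dimension is $\sim 2k$, which can never produce the exponent $4rk^2$), while the "far" set is all of the off-diagonal, of real dimension $\sim 2k^2$. The commutator estimate then only gives $\sum_{F}|u_{ij}|^2\lesssim k^{3}\delta^{2}/r^{2}$, i.e.\ the far block has $\left\Vert\cdot\right\Vert_2$-norm of order $k\delta/r$ — a factor $k$ larger than the allowed radius $4\delta/r$. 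So the far block can be neither discarded nor coarsely discretized: to keep the total count at $\left(\tfrac{3r}{2\delta}\right)^{4rk^{2}}$ (and recall the lemma is used with $r$ arbitrarily small, so $4rk^{2}\ll 2k^{2}$) the far grid must be essentially a single ball, but then it contributes an error of order $k\delta/r$, not $4\delta/r$. This is not a bookkeeping issue that a clever choice of $\epsilon_1,\epsilon_2$ can fix: take $U$ to be a direct sum of $2\times2$ rotations pairing adjacent eigenvalues; then $\left\Vert UD-DU\right\Vert_2\sim 1/k\le\delta$ once $k$ is large, yet the off-diagonal part of $U$ has $\left\Vert\cdot\right\Vert_2$-norm of order $1$. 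The family of such $U$ is far too rich to be covered at scale $\delta/r$ with only $e^{O(rk^{2})}$ balls coming from your near set, so the claimed absorption "into the base" cannot happen.

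The correct cut, which is what the paper uses, is at eigenvalue gap $r$ itself: write $U=T_1+T_2$ with $T_1$ supported on $\mathcal{S}_1=\mathrm{span}\{e_{st}:|\lambda_s-\lambda_t|<r\}$ and $T_2$ on the complement. Then the commutator bound immediately gives $\left\Vert T_2\right\Vert_2\le\delta/r$, so the far part needs no covering at all — one simply truncates $U$ to the band, paying $\delta/r$ in radius. The band has at most $\sim rk$ admissible columns per row, hence $\dim_{\mathbb{R}}\mathcal{S}_1\le 4rk^{2}$, and since $\left\Vert T_1\right\Vert_2\le\left\Vert U\right\Vert_2=1$, Lemma \ref{2} covers the relevant ball in $\mathcal{S}_1$ at scale comparable to $\delta/r$ with at most $\left(\tfrac{3r}{2\delta}\right)^{4rk^{2}}$ balls; this is exactly where the exponent $4rk^{2}$ comes from. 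Finally — a step your outline omits entirely — the centers produced this way lie in $\mathcal{M}_k(\mathbb{C})$, not in $\mathcal{U}(k)$, and must be replaced by unitaries inside the corresponding balls, which doubles the radius and yields the stated $\frac{4\delta}{r}$-balls in $\mathcal{U}(k)$.
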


\begin{proof}
Let $D=diag\left(  \lambda_{1},\cdots,\lambda_{k}\right)  .$ Then there exist
$W_{1},W_{2}\in\mathcal{U}\left(  k\right)  $ such that $D_{1}=W_{1}%
DW_{1}^{\ast}$ and $D_{2}=W_{2}DW_{2}^{\ast}.$ Let
\[
\widetilde{\Omega}(\delta)=\left\{  U\in\mathcal{U}\left(  k\right)
|\left\Vert UD-DU\right\Vert _{2}\leq\delta\right\}  .
\]
Clearly
\[
\Omega(D_{1},D_{2};\delta)=\left\{  W_{2}^{\ast}UW_{1}|U\in\widetilde{\Omega
}(\delta)\right\}  ,
\]
hence $\widetilde{\Omega}(\delta)$ and $\Omega(D_{1},D_{2};\delta)$ have the
same covering numbers.

Let $\left\{  e_{st}\right\}  _{s,t=1}^{k}$ be the canonical system of matrix
units of $\mathcal{M}_{k}\left(  \mathbb{C}\right)  .$ Let
\[
\mathcal{S}_{1}=span\left\{  e_{st}:\left\vert \lambda_{s}-\lambda
_{t}\right\vert <r\right\}  ,\mathcal{S}_{2}=\mathcal{M}_{k}\left(
\mathbb{C}\right)  \ominus\mathcal{S}_{1}.
\]
For every $U=\sum_{s,t=1}^{k}x_{st}e_{st}$ in $\widetilde{\Omega}(\delta),$
with $x_{st}\in\mathbb{C}$, let $T_{1}=\sum_{e_{st}\in\mathcal{S}_{1}}%
x_{st}e_{st}$ and $T_{2}=\sum_{e_{st}\in\mathcal{S}_{2}}x_{st}e_{st}%
\in\mathcal{S}_{2}.$ But
\begin{align*}
\delta^{2}  &  \geq\left\Vert UD-DU\right\Vert _{2}^{2}=\sum_{s,t=1}%
^{k}\left\vert \left(  \lambda_{s}-\lambda_{t}\right)  x_{st}\right\vert
^{2}\geq\sum_{e_{st}\in\mathcal{S}_{2}}\left\vert \left(  \lambda_{s}%
-\lambda_{t}\right)  x_{st}\right\vert ^{2}\\
&  \geq r^{2}\sum_{e_{st}\in\mathcal{S}_{2}}\left\vert x_{st}\right\vert
^{2}=r^{2}\left\Vert T_{2}\right\Vert _{2}^{2}.
\end{align*}
Hence $\left\Vert T_{2}\right\Vert _{2}\leq\frac{\delta}{r}.$

Suppose $\lambda\in\lbrack0,1].$ The number of the points in $\left\{
\lambda_{1},\cdots,\lambda_{k}\right\}  $ which lie inside the interval
$\left(  \lambda-r,\lambda+r\right)  $ is at most the cardinality $m\neq0$ of
$\left(  \lambda-r,\lambda+r\right)  \cap\frac{1}{k}\mathbb{Z}$. Then
$\frac{1}{k}\leq2r,$ this interval contains at least one point in $\frac{1}%
{k}\mathbb{Z}$, and an interval of length $4r$ should contain at least $m+1$
such consecutive points. Hence $4r$ should be at least the length of the
interval defined by $m+1$( i.e., $m\frac{1}{k}$) consecutive points in
$\frac{1}{k}\mathbb{Z}$, i.e., $m\frac{1}{k}\leq4r.$ It follows that
$\dim_{\mathbb{R}}\mathcal{S}_{1}\leq4rk^{2}.$ Note that $\left\Vert
T_{1}\right\Vert _{2}\leq\left\Vert U\right\Vert _{2}=1.$ Then $\widetilde
{\Omega}(\delta)$ can be covered by a set $\left\{  Ball\left(  A^{\lambda
};\frac{2\delta}{r}\right)  \right\}  _{\lambda\in\Lambda}$ of $\frac{2\delta
}{r}$-balls in $\mathcal{M}_{k}\left(  \mathbb{C}\right)  $ with $\left\vert
\Lambda\right\vert \leq\left(  \frac{3r}{2\delta}\right)  ^{4rk^{2}}$ by Lemma
\ref{2}. Because $\widetilde{\Omega}(\delta)\subseteq\mathcal{U}\left(
k\right)  ,$ after replacing $A^{\lambda}$ by a unitary $U^{\lambda}$ in
$Ball\left(  A^{\lambda},\frac{2\delta}{r}\right)  ,$ we obtain that the set
$\left\{  Ball\left(  U_{\lambda};\frac{4\delta}{r}\right)  \right\}
_{\lambda\in\Lambda}$ of $\frac{4\delta}{r}$-balls in $\mathcal{U}\left(
k\right)  $ that cover $\widetilde{\Omega}(\delta)$ with the cardinality of
$\Lambda$ satisfying $\left\vert \Lambda\right\vert \leq\left(  \frac
{3r}{2\delta}\right)  ^{4rk^{2}}.$ The same result holds for $\Omega
(D_{1},D_{2};\delta).$
\end{proof}

The following lemma can be found in \cite{DH}:

\begin{lemma}
\label{3}Suppose $A$ is a normal element in a von Neumann algebra
$\mathcal{M}$ with tracial state $\tau$ such that $A$ has no eigenvalues. Then
there is a positive element $Y$ with the uniform distribution on $[0,1]$ such
that $W^{\ast}\left(  A\right)  =W^{\ast}\left(  Y\right)  .$
\end{lemma}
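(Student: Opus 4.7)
The plan is to reduce the statement to the classical fact that any non-atomic standard probability space is measure-theoretically isomorphic to $([0,1],\lambda)$, where $\lambda$ is Lebesgue measure. First I would observe that $W^{\ast}(A)$ is an abelian von Neumann subalgebra of $\mathcal{M}$, so the restricted tracial state $\tau|_{W^{\ast}(A)}$ corresponds, via the Borel functional calculus, to integration against a probability measure $\mu_{A}$ on the compact set $\sigma(A)\subseteq\mathbb{C}$; explicitly, $\tau(f(A))=\int_{\sigma(A)}f\,d\mu_{A}$ for every bounded Borel $f$, and the map $f\mapsto f(A)$ furnishes a $*$-isomorphism $L^{\infty}(\sigma(A),\mu_{A})\cong W^{\ast}(A)$.

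Next I would translate the hypothesis. The condition that $A$ has no eigenvalues means that no spectral projection $\chi_{\{\lambda\}}(A)$ is nonzero, which is equivalent to saying that $\mu_{A}$ is non-atomic. Since $\sigma(A)$ is a compact metric (hence standard Borel) space, the classical isomorphism theorem of measure spaces produces a Borel isomorphism $\phi:\sigma(A)\to[0,1]$ (modulo null sets) satisfying $\phi_{\ast}\mu_{A}=\lambda$.

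I would then define $Y:=\phi(A)$ through Borel functional calculus. Because $\phi$ takes values in $[0,1]$, $Y$ is positive with $\|Y\|\leq 1$; and because $\phi_{\ast}\mu_{A}=\lambda$, the spectral distribution of $Y$ with respect to $\tau$ is Lebesgue measure on $[0,1]$, which is exactly what ``uniform distribution on $[0,1]$'' means. The inclusion $W^{\ast}(Y)\subseteq W^{\ast}(A)$ is automatic from $Y=\phi(A)$. For the reverse inclusion, I would use that $\phi$ is a Borel isomorphism mod null sets, so there is a Borel left inverse $\psi:[0,1]\to\sigma(A)$ with $\psi\circ\phi=\mathrm{id}$ $\mu_{A}$-a.e.; applying functional calculus, $\psi(Y)=(\psi\circ\phi)(A)=A$ in $W^{\ast}(A)$, giving $A\in W^{\ast}(Y)$ and hence $W^{\ast}(A)=W^{\ast}(Y)$.

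The only subtlety — and therefore the main obstacle — is verifying the measure-space isomorphism theorem in the form I need: specifically, that $\phi$ can be chosen so that both $\phi$ and its inverse are Borel off a null set, so that the identity $\psi(\phi(A))=A$ holds as an equality of elements of $W^{\ast}(A)$ (not merely almost everywhere in some abstract model). This is standard (it is essentially the statement that any non-atomic standard probability space is isomorphic to Lebesgue $[0,1]$ as a measure algebra), but invoking it cleanly is the one place where care is required; everything else is a routine transcription through the functional calculus.
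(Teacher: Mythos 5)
The paper itself gives no proof of this lemma --- it is quoted from \cite{DH} --- so the comparison is with the standard argument, which is exactly the route you take: push everything through Borel functional calculus and invoke the isomorphism theorem for non-atomic probability measures on standard Borel spaces. Your construction of $Y=\phi(A)$ is the right one, and in the setting where the paper actually uses the lemma (the trace there is the GNS trace-vector state, hence normal and faithful on the relevant von Neumann algebra), your argument is complete.

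As written, though, there is one soft spot: you silently identify $W^{\ast}(A)$ with $L^{\infty}(\sigma(A),\mu_{A})$ and correspondingly measure ``null sets'' by $\mu_{A}$. This presupposes that $\tau$ is faithful (and normal) on $W^{\ast}(A)$, which the lemma does not assume: in general $W^{\ast}(A)\cong L^{\infty}(\sigma(A),[E_{A}])$, where $[E_{A}]$ is the measure class of the spectral measure of $A$, and $\mu_{A}=\tau\circ E_{A}$ may have strictly more null sets. Two steps are affected. First, $f\mapsto f(A)$ is not even well defined on $\mu_{A}$-classes when $\tau$ is not faithful, and ``no eigenvalues $\Leftrightarrow$ $\mu_{A}$ non-atomic'' is only the implication ``$\Rightarrow$'' in general --- fortunately that is the direction you need. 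Second, and more seriously, your reverse inclusion: from $\psi\circ\phi=\mathrm{id}$ merely $\mu_{A}$-a.e.\ you cannot conclude $\psi(Y)=A$, because equality in $W^{\ast}(A)$ is detected by $[E_{A}]$-null sets, not $\mu_{A}$-null sets. The clean repair, which also settles the ``subtlety'' you flag at the end, is to use the strong form of the isomorphism theorem (Kechris, \emph{Classical Descriptive Set Theory}, Theorem 17.41): since $E_{A}$ is non-atomic, $\sigma(A)$ is an uncountable standard Borel space and $\mu_{A}$ is non-atomic, so there is an \emph{everywhere-defined} Borel bijection $\phi:\sigma(A)\rightarrow[0,1]$ with Borel inverse and $\phi_{\ast}\mu_{A}=\lambda$. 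Then $Y=\phi(A)$ is positive, $A=(\phi^{-1}\circ\phi)(A)=\phi^{-1}(Y)$ holds exactly, so $W^{\ast}(Y)=W^{\ast}(A)$ with no null-set caveat and no faithfulness hypothesis; normality of $\tau$ on $W^{\ast}(A)$ (implicit whenever one speaks of the distribution of $Y$ under Borel calculus, and automatic in the paper's application) is all that is needed to get $\tau(h(Y))=\int_{0}^{1}h\,d\lambda$ for bounded Borel $h$.
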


\begin{remark}
\label{96}It is well-known that every selfadjoint element of a finite von
Neumann algebra $\mathcal{M}$ has an eigenvalue if and only if $\mathcal{M}$
has a finite-dimensional invariant subspace.
\end{remark}

\begin{lemma}
\label{4}Let $x_{1},\cdots,x_{n},y_{1},\cdots,y_{p},v_{1},\cdots,v_{s}%
,w_{1},\cdots,w_{t}$ be elements in an MF C*-algebra $\mathcal{A}$. If
C*$\left(  x_{1},\cdots,x_{n}\right)  \cap C^{\ast}\left(  y_{1},\cdots
,y_{p}\right)  $ has no finite-dimensional representations, then
\begin{align*}
&  \mathfrak{K}_{top}^{\left(  3\right)  }\left(  x_{1},\cdots,x_{n}%
,y_{1},\cdots,y_{p}:v_{1},\cdots,v_{s},w_{1},\cdots,w_{t}\right) \\
&  \leq\mathfrak{K}_{top}^{\left(  3\right)  }\left(  x_{1},\cdots,x_{n}%
:v_{1},\cdots,v_{s}\right)  +\mathfrak{K}_{top}^{\left(  3\right)  }\left(
y_{1},\cdots,y_{p}:w_{1},\cdots,w_{t}\right)
\end{align*}

\end{lemma}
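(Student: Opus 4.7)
The inequality is nontrivial only when both $\mathfrak{K}_{top}^{(3)}(\vec{x}:\vec{v})=0$ and $\mathfrak{K}_{top}^{(3)}(\vec{y}:\vec{w})=0$, which by the convention $\infty\cdot 0=0$ is equivalent to $\mathfrak{K}_{top}^{(2)}(\vec{x}:\vec{v})=\mathfrak{K}_{top}^{(2)}(\vec{y}:\vec{w})=0$. Under these assumptions together with the hypothesis that $C^{*}(\vec{x})\cap C^{*}(\vec{y})$ has no finite-dimensional representation, my plan is to show $\mathfrak{K}_{top}^{(2)}(\vec{x},\vec{y}:\vec{v},\vec{w})=0$ by a direct three-layer orbit covering of $\Gamma_{R}^{(top)}(\vec{x},\vec{y}:\vec{v},\vec{w};k,\varepsilon,P_{1},\ldots,P_{r})$ in $\mathcal{M}_{k}(\mathbb{C})^{n+p}$, arguing that the total number of $\omega$-orbit-$\|\cdot\|_{2}$-balls needed is $e^{o(k^{2})}$.

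The first two layers apply the two vanishing-dimension hypotheses to the coordinate projections. The image of a joint microstate $(A,B,C,D)$ under $(A,B,C,D)\mapsto(A,C)$ lies in a microstate space for $(\vec{x}:\vec{v})$ and hence admits an orbit cover $\{(A_{i}^{0},C_{i}^{0})\}$ of log-cardinality $o(k^{2})$; similarly the $(B,D)$-projection admits an orbit cover $\{(B_{j}^{0},D_{j}^{0})\}$ of log-cardinality $o(k^{2})$. After a global unitary conjugation the orbit of $(A,B,C,D)$ has a representative of the form $(A_{i}^{0},VB_{j}^{0}V^{*},C_{i}^{0},VD_{j}^{0}V^{*})$ for some $V\in\mathcal{U}(k)$ and some pair $(i,j)$, so the problem reduces to bounding, for each fixed $(i,j)$, the number of $\omega$-orbit-$\|\cdot\|_{2}$-balls required to cover the set of admissible $V$'s.

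For the third layer I would use the hypothesis on the intersection to produce a self-adjoint witness $z=P(\vec{x})=Q(\vec{y})\in C^{*}(\vec{x})\cap C^{*}(\vec{y})$ with $0\leq z\leq 1$ whose spectral behaviour is diffuse enough to force the microstate eigenvalues into the rigid pattern required by Lemma \ref{new}. The absence of finite-dimensional representations of $C^{*}(\vec{x})\cap C^{*}(\vec{y})$ should ensure that $\pi_{\tau}(C^{*}(\vec{x})\cap C^{*}(\vec{y}))''$ has no finite-rank central projection for any MF-trace $\tau$ on $\mathcal{A}$, so Remark \ref{96} together with Lemma \ref{3} yields, after functional calculus, an element whose microstate traces are approximately uniform on $[0,1]$. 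Lemma \ref{1} then shows that $P(A_{i}^{0})$ and $Q(B_{j}^{0})$ are, up to norm-small unitary perturbation, diagonal matrices $D_{1},D_{2}$ with entries $\lambda_{i}=(2i-1)/(2k)$ and multiplicity one, and the compatibility constraint $VQ(B_{j}^{0})V^{*}\approx P(A_{i}^{0})$ becomes $\|\tilde{V}D_{2}-D_{1}\tilde{V}\|_{2}\leq\delta$ for $\tilde{V}=W_{1}^{*}VW_{2}$ (the $W_{i}$ being the diagonalizing unitaries). Lemma \ref{new} then provides a cover by $(3r/2\delta)^{4rk^{2}}$ balls of radius $4\delta/r$ in $\mathcal{U}(k)$; balancing $r$ and $\delta$ against $\omega$ keeps the log-cardinality over $k^{2}$ arbitrarily small, and assembling the three layers gives the desired $e^{o(k^{2})}$ bound.

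The main obstacle I foresee is the construction of the witness $z$: translating the purely representation-theoretic no-finite-dimensional-representation hypothesis into a single self-adjoint element whose microstates at every scale $k$ have simple, uniformly-spaced spectra. The delicate point is controlling the multiplicity freedom left open by Lemma \ref{1} --- forcing every multiplicity $k_{j}$ to equal one --- which requires the trace of the microstate of $z$ to be sufficiently close to uniform. I expect this to be achievable by combining Lemma \ref{3} with a careful functional-calculus approximation of a uniformly-distributed element in $\pi_{\tau}(C^{*}(\vec{x})\cap C^{*}(\vec{y}))''$, and possibly by a compactness argument on the MF-trace simplex of $\mathcal{A}$ to uniformize the choice of $z$ across all MF-traces simultaneously.
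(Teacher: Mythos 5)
Your outline follows the paper's strategy: reduce to the case where both right-hand terms vanish, cover the two coordinate projections by subexponentially many orbit balls, and then control the relative unitary $V$ by means of a spectrally diffuse element of $C^{\ast}\left(x_{1},\ldots,x_{n}\right)\cap C^{\ast}\left(y_{1},\ldots,y_{p}\right)$ together with Lemma \ref{new}. However, two of your steps have real problems. The witness cannot be taken of the form $z=P(\vec{x})=Q(\vec{y})$: an element of the intersection is only a norm limit of polynomials in each family separately, and in general no exact polynomial identity is available. The paper sidesteps this by adjoining the witness $d$ itself as an extra self-adjoint coordinate, covering the microstate spaces of $\left(x_{1},\ldots,x_{n},d:\ldots\right)$ and $\left(y_{1},\ldots,y_{p},d:\ldots\right)$ so that the two halves literally share the same matrix $D$, and deleting $d$ only at the end via $C^{\ast}\left(\vec{x},\vec{y},d\right)=C^{\ast}\left(\vec{x},\vec{y}\right)$ and Theorem \ref{ind}; your version could be patched with approximating polynomials (adding $P(\vec{X})-Q(\vec{Y})$ to the test polynomials and tracking the errors), but as written it assumes something false.

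More seriously, your plan for the third layer runs through trace control of the microstates of $z$ (Lemma \ref{3}, ``microstate traces approximately uniform on $[0,1]$,'' a compactness argument over the MF-trace simplex). This is a dead end in the topological setting: $\Gamma_{R}^{(top)}$ is defined by norm conditions alone, so the microstates of $z$ satisfy no constraint whatsoever on $\tau_{k}$, and no choice of $z$ can force a near-uniform eigenvalue distribution by trace reasoning. The paper works differently: it produces, once and for all at the C*-level, a self-adjoint $d\in C^{\ast}\left(x_{1},\ldots,x_{n}\right)\cap C^{\ast}\left(y_{1},\ldots,y_{p}\right)$ with $\sigma(d)=[0,1]$ --- using the GNS representation of a trace, finiteness of $\pi\left(C^{\ast}(\vec{x})\cap C^{\ast}(\vec{y})\right)^{\prime\prime}$, the absence of finite-dimensional invariant subspaces (Remark \ref{96}), Voiculescu's semicontinuity of $\delta_{0}$ to obtain an eigenvalue-free element already in the C*-image, a self-adjoint lift $b$, and continuous functional calculus --- and then invokes Lemma \ref{1} (the norm condition $\sigma(d)=[0,1]$ makes the approximate eigenvalues of any microstate of $d$ dense in $[0,1]$ at the relevant scale) to place the $D$-coordinates of the covering centers into the diagonal form demanded by Lemma \ref{new}, after which the count $\left(\frac{3r}{2\delta}\right)^{4rk^{2}}$ is assembled with the two orbit coverings exactly as you intend. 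You correctly identified the multiplicity reduction as the crux, but the mechanism you propose for it cannot supply it, so the proposal is incomplete at precisely the decisive step.
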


\begin{proof}
Without loss of generality, we may assume that $\left\Vert x_{i}\right\Vert
\leq1$ and $\left\Vert y_{j}\right\Vert \leq1$ for each $1\leq i\leq n$ and
$1\leq j\leq p.$ If one of $\mathfrak{K}_{top}^{\left(  3\right)  }\left(
x_{1},\cdots,x_{n}:v_{1},\cdots,v_{s}\right)  $ and $\mathfrak{K}%
_{top}^{\left(  3\right)  }\left(  y_{1},\cdots,y_{p}:w_{1},\cdots
,w_{t}\right)  $ is infinity, then we are done. Now suppose
\[
\mathfrak{K}_{top}^{\left(  3\right)  }\left(  x_{1},\cdots,x_{n}:v_{1}%
,\cdots,v_{s}\right)  =\mathfrak{K}_{top}^{\left(  3\right)  }\left(
y_{1},\cdots,y_{p}:w_{1},\cdots,w_{t}\right)  =0.
\]
Let $\tau$ be a trace on $C^{\ast}\left(  x_{1},\cdots,x_{n},y_{1}%
,\cdots,y_{p},v_{1},\cdots,v_{s},w_{1},\cdots,w_{t}\right)  .$ Let $(\pi,M,e)$
denote the GNS construction for $\tau,$ i.e., $\pi:\mathcal{A\longrightarrow
B}\left(  M\right)  $ is a unital *-homomorphism with unit cyclic vector $e$,
such that, for every $a\in\mathcal{A}$, we have $\tau(a)=(\pi(a)e,e).$ Note
the state $\rho$ defined by $\rho\left(  a\right)  =(\pi(a)e,e)$ is faithful
on
\[
\pi\left(  C^{\ast}\left(  x_{1},\cdots,x_{n}\right)  \cap C^{\ast}\left(
y_{1},\cdots,y_{p}\right)  \right)  ^{\prime\prime},
\]
so $\pi\left(  C^{\ast}\left(  x_{1},\cdots,x_{n}\right)  \cap C^{\ast}\left(
y_{1},\cdots,y_{p}\right)  \right)  ^{\prime\prime}$ is finite. Since
C*$\left(  x_{1},\cdots,x_{n}\right)  \cap C^{\ast}\left(  y_{1},\cdots
,y_{p}\right)  $ has no finite-dimensional representation,
\[
\pi\left(  C^{\ast}\left(  x_{1},\cdots,x_{n}\right)  \cap C^{\ast}\left(
y_{1},\cdots,y_{p}\right)  \right)  ^{\prime\prime}%
\]
has no non-zero finite-dimensional invariant subspace. Hence there is an
\[
a=a^{\ast}\in\pi\left(  C^{\ast}\left(  x_{1},\cdots,x_{n}\right)  \cap
C^{\ast}\left(  y_{1},\cdots,y_{p}\right)  \right)  ^{\prime\prime}%
\]
such that $a$ has no eigenvalues by Remark \ref{96}. Let $\left\{
a_{k}\right\}  $ be a sequence of self-adjoint elements in $\pi\left(
C^{\ast}\left(  x_{1},\cdots,x_{n}\right)  \cap C^{\ast}\left(  y_{1}%
,\cdots,y_{p}\right)  \right)  $ with $a_{k}\rightarrow a$ in the $\ast
$-strong operator topology. Then Voiculescu (\cite{DV2}) proved that
\[
1=\delta_{0}(a,\rho)\leq\underset{k\rightarrow\mathcal{1}}{\lim\inf}\delta
_{0}(a_{k},\rho)=\underset{k\rightarrow\mathcal{1}}{\lim\inf}\left(
1-\sum_{t\text{ is an eigenvalue of a}}\rho\left(  P_{t}\right)  ^{2}\right)
\]
where $\delta_{0}$ is the free entropy dimension for von Neumann algebras. So
we can find a self-adjoint element $a_{k}\in\pi\left(  C^{\ast}\left(
x_{1},\cdots,x_{n}\right)  \cap C^{\ast}\left(  y_{1},\cdots,y_{p}\right)
\right)  $ satisfying that $a_{k}$ has no eigenvalues. Therefore there is an
interval $[\alpha,\beta]$ such that $[\alpha,\beta]\subseteq\sigma\left(
a_{k}\right)  .$ Let $b$ be a self-adjoint element in $C^{\ast}\left(
x_{1},\cdots,x_{n}\right)  \cap C^{\ast}\left(  y_{1},\cdots,y_{p}\right)  $
satisfying $\pi\left(  b\right)  =a_{k}$. Then $[\alpha,\beta]\subseteq
\sigma\left(  b\right)  $. Define a continuous function
\[
f(t)=\left\{
\begin{array}
[c]{cc}%
0 & ,\text{ -}\left\Vert b\right\Vert \leq x<\alpha\\
\frac{x-\alpha}{\beta-\alpha} & \alpha\leq x\leq\beta\\
1 & \beta<x\leq\left\Vert b\right\Vert
\end{array}
\right.
\]
over $[-\left\Vert b\right\Vert ,\left\Vert b\right\Vert ].$ Then
$\sigma\left(  f(b)\right)  =[0,1]$ and
\[
f\left(  b\right)  \in C^{\ast}\left(  x_{1},\cdots,x_{n}\right)  \cap
C^{\ast}\left(  y_{1},\cdots,y_{p}\right)  .
\]
Denote $d=f(b)$. If%
\begin{align*}
&  \left(  A_{1},\cdots,A_{n},B_{1},\cdots,B_{p},D\right) \\
&  \in\Gamma^{\left(  top\right)  }(x_{1},\cdots,x_{n},y_{1},\cdots
,y_{p},d:v_{1},\cdots,v_{s},w_{1},\cdots,w_{t};P_{1},\cdots,P_{m}%
,k,\varepsilon),
\end{align*}
then
\[
\left(  A_{1},\cdots,A_{n},D\right)  \in\Gamma^{\left(  top\right)  }%
(x_{1},\cdots,x_{n},d:v_{1},\cdots,v_{s},w_{1},\cdots,w_{t};P_{1}^{\prime
},\cdots,P_{m_{1}}^{\prime},k,\varepsilon)
\]%
\[
\left(  B_{1},\cdots,B_{p},D\right)  \in\Gamma^{\left(  top\right)  }\left(
y_{1},\cdots,y_{p},d:v_{1},\cdots,v_{s},w_{1},\cdots,w_{t};P_{1}^{\prime
\prime},\cdots,P_{m_{2}}^{\prime\prime},k,\varepsilon\right)
\]
where $P_{1}^{\prime},\cdots,P_{m_{1}}^{\prime}\in\mathbb{C\langle}%
X_{1},\cdots,X_{n+1},V_{1},\cdots,V_{s},W_{1},\cdots,W_{t}\mathbb{\rangle}$
and $P_{1}^{\prime\prime},\cdots,P_{m_{2}}^{\prime\prime}\in\mathbb{C\langle
}Y_{1},\cdots,Y_{p+1},V_{1},\cdots,V_{s},W_{1},\cdots,W_{t}\mathbb{\rangle}$.
Let $\left\{  \mathcal{U(}A_{1}^{\lambda},\cdots,A_{n}^{\lambda},D^{\lambda
});\frac{r\omega}{24}\right\}  _{\lambda\in\Lambda_{k}}$ be a set of
$\frac{r\omega}{24}$-orbit-balls that cover
\[
\Gamma^{\left(  top\right)  }(x_{1},\cdots,x_{n},d:v_{1},\cdots,v_{s}%
,w_{1},\cdots,w_{t};k,\varepsilon,P_{1}^{\prime},\cdots,P_{m_{1}}^{\prime})
\]
with the cardinality of $\Lambda_{k}$ satisfying
\[
\left\vert \Lambda_{k}\right\vert =o_{2}\left(  \Gamma^{\left(  top\right)
}(x_{1},\cdots,x_{n},d:v_{1},\cdots,v_{s},w_{1},\cdots,w_{t};k,\varepsilon
,P_{1}^{\prime},\cdots,P_{m_{1}}^{\prime});\frac{r\omega}{24}\right)  .
\]
Also let $\left\{  \mathcal{U(}B_{1}^{\sigma},\cdots,B_{n}^{\sigma},D^{\sigma
});\frac{r\omega}{24}\right\}  _{\sigma\in\Sigma_{k}}$ be a set of
$\frac{r\omega}{24}$-orbit-balls that cover
\[
\Gamma_{R}\left(  y_{1},\cdots,y_{p},d:v_{1},\cdots,v_{s},w_{1},\cdots
,w_{t};k,\varepsilon,P_{1}^{\prime\prime},\cdots,P_{m_{2}}^{\prime\prime
}\right)
\]
with the cardinality of $\Sigma_{k}$ satisfying
\[
\left\vert \Sigma_{k}\right\vert =o_{2}\left(  \Gamma_{R}\left(  y_{1}%
,\cdots,y_{p},d:v_{1},\cdots,v_{s},w_{1},\cdots,w_{t};k,\varepsilon
,P_{1}^{\prime\prime},\cdots,P_{m_{2}}^{\prime\prime}\right)  ;\frac{r\omega
}{24}\right)  .
\]
When $m$ is sufficiently large and $\varepsilon$ is sufficiently small, we may
assume that all $D^{\sigma},D^{\lambda}$ are diagonal matrices in Lemma
\ref{new} by Lemma \ref{1}. For any
\[
\left(  A_{1},\cdots,A_{n},B_{1},\cdots,B_{p},D\right)
\]%
\[
\in\Gamma_{R}\left(  x_{1},\cdots,x_{n},y_{1},\cdots,y_{p},d:v_{1}%
,\cdots,v_{s},w_{1},\cdots,w_{t};P_{1},\cdots,P_{m},k,\varepsilon\right)  ,
\]
there exist some $\lambda\in\Lambda_{k},\sigma\in\Sigma_{k}$ and $W_{1}%
,W_{2}\in\mathcal{U}_{k}$ such that
\[
\left\Vert \left(  A_{1},\cdots,A_{n},D\right)  -W_{1}\left(  A_{1}^{\lambda
},\cdots,A_{n}^{\lambda},D^{\lambda}\right)  W_{1}^{\ast}\right\Vert _{2}%
\leq\frac{r\omega}{24},
\]%
\[
\left\Vert \left(  B_{1},\cdots,B_{p},D\right)  -W_{2}\left(  B_{1}^{\sigma
},\cdots,B_{p}^{\sigma},D^{\sigma}\right)  W_{2}^{\ast}\right\Vert _{2}%
\leq\frac{r\omega}{24}.
\]
Therefore%
\[
\left\Vert W_{1}D^{\lambda}W_{1}^{\ast}-W_{2}D^{\sigma}W_{2}^{\ast}\right\Vert
_{2}=\left\Vert W_{2}^{\ast}W_{1}D^{\lambda}-D^{\sigma}W_{2}^{\ast}%
W_{1}\right\Vert _{2}\leq\frac{r\omega}{12}.
\]
From Lemma \ref{new}, there exists a set $\left\{  Ball\left(  U_{\lambda
,\sigma,\gamma},\frac{\omega}{3}\right)  \right\}  _{\gamma\in\Delta_{k}}$ in
$\mathcal{U}\left(  k\right)  $ which cover $\Omega\left(  D^{\lambda
},D^{\sigma};\frac{r\omega}{12}\right)  $ with cardinality $\left\vert
\Delta_{k}\right\vert \leq\left(  \frac{18}{\omega}\right)  ^{4rk^{2}}.$ This
implies that
\begin{align*}
&  ||\left(  A_{1},\cdots,A_{n},B_{1},\cdots,B_{p},D\right) \\
&  -\left(  W_{2}U_{\lambda,\sigma,\gamma}A_{1}^{\lambda}U_{\lambda
,\sigma,\gamma}^{\ast}W_{2}^{\ast},\cdots,W_{2}U_{\lambda,\sigma,\gamma}%
A_{n}^{\lambda}U_{\lambda,\sigma,\gamma}^{\ast}W_{2}^{\ast},W_{2}B_{1}%
^{\sigma}W_{2}^{\ast},\cdots,W_{2}B_{p}^{\sigma}W_{2}^{\ast}\right)
||_{2}<n\omega
\end{align*}
Then we get
\begin{align*}
&  \mathfrak{K}_{3}^{(top)}\left(  x_{1},\cdots,x_{n},y_{1},\cdots
,y_{p},u:v_{1},\cdots,v_{s},w_{1},\cdots,w_{t};2n\omega\right) \\
&  \leq\inf_{m\in\mathbb{N}\text{,}\varepsilon>0}\underset{k\longrightarrow
\mathcal{1}}{\lim\sup}\frac{\log\left(  \left\vert \Lambda_{k}\right\vert
\left\vert \Sigma_{k}\right\vert \left\vert \Delta_{k}\right\vert \right)
}{k^{2}}\leq4r\left(  \log\left(  18\right)  -\log\omega\right)
\end{align*}

Because $r$ is an arbitrarily small number, we have
\[
\mathfrak{K}_{3}^{(top)}\left(  x_{1},\cdots,x_{n},y_{1},\cdots,y_{p}%
,d:v_{1},\cdots,v_{s},w_{1},\cdots,w_{t}\right)  =0.
\]
Note that $C^{\ast}\left(  x_{1},\cdots,x_{n},y_{1},\cdots,y_{p},d\right)
=C^{\ast}\left(  x_{1},\cdots,x_{n},y_{1},\cdots,y_{p}\right)  ,$ so by
Theorem \ref{ind}, we have
\[
\mathfrak{K}_{3}^{(top)}\left(  x_{1},\cdots,x_{n},y_{1},\cdots,y_{p}%
:v_{1},\cdots,v_{s},w_{1},\cdots,w_{t}\right)  =0.
\]
This completes the proof.
\end{proof}

\begin{theorem}
\label{6}Suppose $\mathcal{A}$ is a C*-algebra, $\mathcal{N}_{1}$ and
$\mathcal{N}_{2}$ are C*-subalgebra of $\mathcal{A}$. If $\mathcal{N}_{1}%
\cap\mathcal{N}_{2}$ is has no finite-dimensional representation, then%
\[
\mathfrak{K}_{3}^{(top)}\left(  C^{\ast}\left(  \mathcal{N}_{1}\cup
\mathcal{N}_{2}\right)  \right)  \leq\mathfrak{K}_{3}^{(top)}\left(
\mathcal{N}_{1}\right)  +\mathfrak{K}_{3}^{(top)}\left(  \mathcal{N}%
_{2}\right)
\]

\end{theorem}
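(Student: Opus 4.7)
The plan is to reduce to the case where both sides are finite. Since $\mathfrak{K}_{top}^{(3)}$ takes only the values $0$ and $\infty$, the only non-trivial case is $\mathfrak{K}_{top}^{(3)}(\mathcal{N}_{1})=\mathfrak{K}_{top}^{(3)}(\mathcal{N}_{2})=0$, and we must show $\mathfrak{K}_{top}^{(3)}(C^{\ast}(\mathcal{N}_{1}\cup\mathcal{N}_{2}))=0$. By the preceding corollary (invariance of $\mathfrak{K}_{top}^{(3)}$ under replacing a generating set by the generated algebra), this equals $\mathfrak{K}_{top}^{(3)}(\mathcal{N}_{1}\cup\mathcal{N}_{2})$, so it suffices to verify condition (5) of Theorem \ref{hi} applied to the generating set $\mathcal{N}_{1}\cup\mathcal{N}_{2}$: for every finite $E\subseteq\mathcal{N}_{1}\cup\mathcal{N}_{2}$ I must produce a finite $F\subseteq\mathcal{N}_{1}\cup\mathcal{N}_{2}$ with $\mathfrak{K}_{top}^{(2)}(E:F)=0$.

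First I would extract a single self-adjoint element $d\in\mathcal{N}_{1}\cap\mathcal{N}_{2}$ with $\sigma(d)=[0,1]$ by repeating the first half of the proof of Lemma \ref{4} applied to the intersection: choose any MF-trace $\tau$ on $C^{\ast}(\mathcal{N}_{1}\cup\mathcal{N}_{2})$ and pass to the GNS representation $\pi$; then $\pi(\mathcal{N}_{1}\cap\mathcal{N}_{2})^{\prime\prime}$ is finite, and the hypothesis that $\mathcal{N}_{1}\cap\mathcal{N}_{2}$ has no finite-dimensional representations, combined with Remark \ref{96}, yields a self-adjoint element with no eigenvalues in this von Neumann closure. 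Voiculescu's lower semi-continuity of the free entropy dimension $\delta_{0}$ then pulls this back to a self-adjoint $b\in\mathcal{N}_{1}\cap\mathcal{N}_{2}$ whose spectrum contains an interval $[\alpha,\beta]$, and the piecewise-linear functional calculus $d:=f(b)$ used in Lemma \ref{4} provides the desired element.

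Given any finite $E\subseteq\mathcal{N}_{1}\cup\mathcal{N}_{2}$, I would enlarge $E$ to $\widetilde{E}=E\cup\{d\}$ and decompose $\widetilde{E}=\widetilde{E}_{1}\cup\widetilde{E}_{2}$ with $\widetilde{E}_{i}\subseteq\mathcal{N}_{i}$ and $d\in\widetilde{E}_{1}\cap\widetilde{E}_{2}$. Invoking $\mathfrak{K}_{top}^{(3)}(\mathcal{N}_{i})=0$ via Theorem \ref{hi}(5) (with $\mathcal{G}=\mathcal{N}_{i}$) delivers finite $F_{i}\subseteq\mathcal{N}_{i}$ with $\mathfrak{K}_{top}^{(2)}(\widetilde{E}_{i}:F_{i})=0$. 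I then re-run verbatim the orbit-covering portion of the proof of Lemma \ref{4}, starting from ``Denote $d=f(b)$'' and invoking Lemmas \ref{1} and \ref{new}, with our constructed $d\in C^{\ast}(\widetilde{E}_{1})\cap C^{\ast}(\widetilde{E}_{2})$ playing the role of the common element of interval spectrum. The result is
\[
\mathfrak{K}_{top}^{(2)}(\widetilde{E}_{1}\cup\widetilde{E}_{2}:F_{1}\cup F_{2})\leq\mathfrak{K}_{top}^{(2)}(\widetilde{E}_{1}:F_{1})+\mathfrak{K}_{top}^{(2)}(\widetilde{E}_{2}:F_{2})=0,
\]
and then Remark \ref{indd}(2) gives $\mathfrak{K}_{top}^{(2)}(E:F_{1}\cup F_{2}\cup\{d\})=0$, so $F:=F_{1}\cup F_{2}\cup\{d\}\subseteq\mathcal{N}_{1}\cup\mathcal{N}_{2}$ is the required finite set.

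The main obstacle is that Lemma \ref{4} cannot be cited as a black box. Its stated hypothesis is that $C^{\ast}(\widetilde{E}_{1})\cap C^{\ast}(\widetilde{E}_{2})$ has no finite-dimensional representations, but in our setup that intersection contains $C^{\ast}(d)\cong C([0,1])$, which has a continuum of one-dimensional representations, so the hypothesis fails on the nose. The argument is rescued by the observation that the ``no finite-dimensional representations'' assumption enters the proof of Lemma \ref{4} only to construct the element $d$; thereafter the orbit-ball counting takes $d$ as input and goes through unchanged. Theorem \ref{6} therefore reduces to constructing $d$ from the weaker hypothesis on $\mathcal{N}_{1}\cap\mathcal{N}_{2}$ and then reusing the covering half of the proof of Lemma \ref{4}.
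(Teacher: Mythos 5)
Your proof follows essentially the same route as the paper's: reduce to the case $\mathfrak{K}_{top}^{(3)}(\mathcal{N}_1)=\mathfrak{K}_{top}^{(3)}(\mathcal{N}_2)=0$, construct a self-adjoint $d\in\mathcal{N}_1\cap\mathcal{N}_2$ with $\sigma(d)=[0,1]$ by the first half of the proof of Lemma \ref{4}, split a finite subset containing $d$ into its $\mathcal{N}_1$- and $\mathcal{N}_2$-parts using $\mathfrak{K}_{top}^{(3)}(\mathcal{N}_i)=0$, run the orbit-covering argument of Lemma \ref{4}, and conclude via Theorem \ref{hi}(5). Your explicit observation that Lemma \ref{4} must be invoked through its proof (the no-finite-dimensional-representation hypothesis serving only to produce $d$, after which the covering estimate takes $d$ as input) is precisely how the paper's citation of Lemma \ref{4} has to be read, so your write-up is a correct and, if anything, more carefully stated version of the same argument.
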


\begin{proof}
If one of $\mathfrak{K}_{3}^{(top)}\left(  \mathcal{N}_{1}\right)  $ and
$\mathfrak{K}_{3}^{(top)}\left(  \mathcal{N}_{2}\right)  $ is infinity, the
inequality holds automatically.

Now suppose that
\[
\mathfrak{K}_{3}^{(top)}\left(  \mathcal{N}_{1}\right)  =\mathfrak{K}%
_{3}^{(top)}\left(  \mathcal{N}_{2}\right)  =0.
\]
By the same argument in the first part proof of Lemma \ref{4}, we can find a
self-adjoint element $d$ in $\mathcal{N}_{1}\cap\mathcal{N}_{2}$ with
$\sigma\left(  d\right)  =[0,1].$ Let $\mathcal{G=N}_{1}\cup\mathcal{N}_{2}$
and $A_{0}=\left\{  d\right\}  .$ Then $\mathcal{G}$ is a generating set of
$C^{\ast}\left(  \mathcal{N}_{1}\cup\mathcal{N}_{2}\right)  $. Suppose
$A_{0}\subseteq A\subseteq\mathcal{G}$ where $A=\left\{  x_{1},\cdots
,x_{n},d,y_{1},\cdots,y_{p}\right\}  $ is a finite subset with $x_{1}%
,\cdots,x_{n}\in\mathcal{N}_{1}$ and $y_{1},\cdots,y_{p}\in\mathcal{N}_{2}.$
Since $\mathfrak{K}_{3}^{(top)}\left(  \mathcal{N}_{1}\right)  =\mathfrak{K}%
_{3}^{(top)}\left(  \mathcal{N}_{2}\right)  =0,$ there exists
\[
v_{1},\cdots,v_{s}\in\mathcal{N}_{1},w_{1},\cdots,w_{t}\in\mathcal{N}_{2}%
\]
such that
\[
\mathfrak{K}_{2}^{(top)}\left(  x_{1},\cdots,x_{n},d:v_{1},\cdots
,v_{s}\right)  =\mathfrak{K}_{2}^{(top)}\left(  y_{1},\cdots,y_{t}%
,d:w_{1},\cdots,w_{t}\right)  =0.
\]
Because $d\in C^{\ast}\left(  x_{1},\cdots,x_{n},d\right)  \cap C^{\ast
}\left(  y_{1},\cdots,y_{p},d\right)  ,$ then from Lemma \ref{4}, we know
that
\begin{align*}
&  \mathfrak{K}_{3}^{(top)}\left(  A:v_{1},\cdots,v_{s},w_{1},\cdots
,w_{t}\right) \\
&  =\mathfrak{K}_{3}^{(top)}\left(  x_{1},\cdots,x_{n},y_{1},\cdots
,y_{p},d:v_{1},\cdots,v_{s},w_{1},\cdots,w_{t}\right)  =0
\end{align*}
Therefore, by Theorem \ref{hi} (5), $\mathfrak{K}_{3}^{(top)}\left(  C^{\ast
}\left(  \mathcal{N}_{1}\cup\mathcal{N}_{2}\right)  \right)  =0.$ This
completes the proof.
\end{proof}

\begin{theorem}
\label{qq}Let $\mathcal{N}$ be an MF C*-algebra and $\mathcal{A\subseteq N}$
be a C*-subalgebra such that $\mathcal{A}$ has a self-adjoint element $a$ with
$\sigma\left(  a\right)  =[0,1].$ If there is an unitary $u\in\mathcal{N}$
such that $u^{\ast}au\subseteq\mathcal{A}$. Then
\[
\mathfrak{K}_{3}^{(top)}\left(  C^{\ast}\left(  \mathcal{A}\cup\left\{
u\right\}  \right)  \right)  \leq\mathfrak{K}_{3}^{(top)}\left(
\mathcal{A}\right)
\]

\end{theorem}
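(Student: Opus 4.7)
The approach is to follow the microstate-counting scheme of Lemma \ref{4}, with the unitary $u$ and the pair $a,v:=u^{\ast}au$ of self-adjoint elements of $\mathcal{A}$ (both of spectrum $[0,1]$) playing the role of the common element $d$; the relation $u^{\ast}au=v$ will force the microstate-level intertwining $U^{\ast}AU\approx V$ that drives the counting. Since $\mathfrak{K}_{top}^{(3)}$ takes only the values $0$ and $\mathcal{1}$, I may assume $\mathfrak{K}_{top}^{(3)}(\mathcal{A})=0$ (else the inequality is trivial) and aim to prove $\mathfrak{K}_{top}^{(3)}(C^{\ast}(\mathcal{A}\cup\{u\}))=0$. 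By Theorem \ref{hi}(5), applied with $\mathcal{G}=\mathcal{A}\cup\{u\}$ and $A_{0}=\{u,a\}$, it suffices to produce for each finite $A=\{x_{1},\ldots,x_{n},u\}\supseteq A_{0}$ a finite $B\subseteq\mathcal{G}$ with $\mathfrak{K}_{top}^{(2)}(A:B)=0$. Writing $a=x_{i_{0}}$, Theorem \ref{hi}(2) applied to $\{x_{1},\ldots,x_{n},v\}\subseteq\mathcal{A}$ yields $w_{1},\ldots,w_{s}\in\mathcal{A}$ with $\mathfrak{K}_{top}^{(2)}(x_{1},\ldots,x_{n},v:w_{1},\ldots,w_{s})=0$, and I take $B=\{v,w_{1},\ldots,w_{s}\}$.

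The counting is carried out in two layers. Given a norm-microstate $(A_{1},\ldots,A_{n},U,V,W_{1},\ldots,W_{s})$ of $(x_{1},\ldots,x_{n},u,v,w_{1},\ldots,w_{s})$ with sufficiently small parameters, the first layer orbit-covers the sub-tuple $(A_{1},\ldots,A_{n},V,W_{1},\ldots,W_{s})$ by $\|\cdot\|_{2}$-orbit balls of radius $r\omega/24$; the vanishing of $\mathfrak{K}_{top}^{(2)}(x_{1},\ldots,x_{n},v:w_{1},\ldots,w_{s})$ forces the required count to be of log-growth $o(k^{2})$. Representatives $(A_{i}^{\lambda},V^{\lambda},W_{j}^{\lambda})$ are chosen so that $V^{\lambda}$ is a diagonal matrix of the form prescribed by Lemma \ref{new}, giving $(A_{i},V,W_{j})\approx W_{\ast}(A_{i}^{\lambda},V^{\lambda},W_{j}^{\lambda})W_{\ast}^{\ast}$ for some $W_{\ast}\in\mathcal{U}(k)$. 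In the second layer, Lemma \ref{1} applied to $A^{\lambda}:=A_{i_{0}}^{\lambda}$ produces a unitary $Z^{\lambda}$ and a diagonal $D_{A}^{\lambda}$ of the prescribed form with $\|Z^{\lambda\ast}A^{\lambda}Z^{\lambda}-D_{A}^{\lambda}\|\leq 2\omega$. Setting $\widetilde{U}:=W_{\ast}^{\ast}UW_{\ast}$ and $T:=\widetilde{U}Z^{\lambda}$, the constraint $\widetilde{U}A^{\lambda}\approx V^{\lambda}\widetilde{U}$ (forced by $U^{\ast}AU\approx V$) becomes $TD_{A}^{\lambda}\approx V^{\lambda}T$, an approximate intertwining between two diagonal matrices; by Lemma \ref{new}, the set of such $T$ is covered by at most $(18/\omega)^{4rk^{2}}$ balls of radius $\omega/3$ in $\mathcal{U}(k)$.

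Consequently, the projection $(A_{1},\ldots,A_{n},U)$ of the microstate lies in an orbit-$\|\cdot\|_{2}$-ball of radius proportional to $n\omega$ centered at $(A_{i}^{\lambda},T_{\gamma}Z^{\lambda\ast})$, and the total number of centers is at most $\exp(o(k^{2}))\cdot(18/\omega)^{4rk^{2}}$. This gives
\[
\limsup_{k\to\infty}\frac{\log o_{2}\bigl(\Gamma_{R}^{(top)}(x_{1},\ldots,x_{n},u:v,w_{1},\ldots,w_{s};k,\varepsilon,\ldots),\,n\omega\bigr)}{k^{2}}\leq 4r\log(18/\omega),
\]
and letting $r\to 0^{+}$ yields $\mathfrak{K}_{top}^{(2)}(A:B)=0$, as required.

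The main obstacle is the second layer: after the first orbit cover makes $V^{\lambda}$ diagonal, $A^{\lambda}$ need not be, so Lemma \ref{new} does not apply directly. The resolution is to spectrally diagonalize $A^{\lambda}$ via the $\lambda$-dependent but microstate-independent unitary $Z^{\lambda}$ from Lemma \ref{1} and to absorb $Z^{\lambda}$ into the new unknown $T=\widetilde{U}Z^{\lambda}$, reducing the intertwining problem to the diagonal-to-diagonal setting of Lemma \ref{new}; once this bookkeeping is done, the counting is essentially the one in Lemma \ref{4}.
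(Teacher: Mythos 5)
Your proposal is correct and is essentially the paper's own argument: reduce by Theorem \ref{hi} to showing that $\mathfrak{K}_{top}^{(2)}$ vanishes for a finite tuple containing $u$ relative to finitely many elements of $\mathcal{A}$ (using $\mathfrak{K}_{top}^{(3)}(\mathcal{A})=0$ to get the first orbit cover with $o(k^{2})$ log-count), then bound the number of approximate intertwiners between the microstate components of $a$ and $u^{\ast}au$ via Lemma \ref{new}, arriving at a bound of order $4r\log(18/\omega)$ and letting $r\rightarrow 0^{+}$. Your only deviation is bookkeeping: the paper covers the pair $(a,u^{\ast}au)$ jointly and takes its centers already in the Lemma \ref{new} form with $B^{\lambda}=U^{\ast}A^{\lambda}U$, whereas you keep $a$ among the $x_{i}$ and diagonalize its center through Lemma \ref{1}, absorbing $Z^{\lambda}$ into the unknown unitary $T$ --- a cosmetic variant which, just like the corresponding step in the paper, still tacitly assumes the resulting diagonal matrices have the equispaced, multiplicity-one shape required by Lemma \ref{new}, since Lemma \ref{1} only controls eigenvalue locations and not multiplicities.
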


\begin{proof}
If $\mathfrak{K}_{3}^{(top)}\left(  \mathcal{A}\right)  =\mathcal{1}$, we are
done. Now suppose that $\mathfrak{K}_{3}^{(top)}\left(  \mathcal{A}\right)
=0.$Let $x_{1},\cdots,x_{n},a,u^{\ast}au$ be elements in $\mathcal{A}$. Then
there exist $y_{1},\cdots,y_{p}$ in $\mathcal{A}$ such that
\[
\mathfrak{K}_{2}^{(top)}\left(  x_{1},\cdots,x_{n},a,u^{\ast}au:y_{1}%
,\cdots,y_{p}\right)  =0.
\]
For any $0<\omega<1,0<r<1,m,k\in\mathbb{N}$ and $\varepsilon>0,$ there exists
a set
\[
\left\{  \mathcal{U}\left(  T_{1}^{\lambda},\cdots,T_{n}^{\lambda},A^{\lambda
},B^{\lambda};\frac{r\omega}{64}\right)  \right\}  _{\lambda\in\Lambda_{k}}%
\]
of $\frac{r\omega}{64}$-orbit-balls in $\mathcal{M}_{k}\left(  \mathbb{C}%
\right)  ^{n+2}$ that cover
\[
\Gamma^{(top)}\left(  x_{1},\cdots,x_{n},a,u^{\ast}au:y_{1},\cdots
,y_{p}:k,\varepsilon,P_{1},\cdots,P_{m}\right)
\]
where $P_{1},\cdots,P_{m}\in\mathbb{C\langle}X_{1},\cdots,X_{n+2},Y_{1}%
,\cdots,Y_{p}\mathbb{\rangle}$ with the cardinality of $\Lambda_{k}$
satisfying
\[
\left\vert \Lambda_{k}\right\vert =o_{2}\left(  \Gamma^{(top)}\left(
x_{1},\cdots,x_{n},a,u^{\ast}au:y_{1},\cdots,y_{p};k,\varepsilon,P_{1}%
,\cdots,P_{m}\right)  ,\frac{r\omega}{64}\right)  .
\]
When $m$ is sufficiently large and $\varepsilon$ is sufficiently small, we can
assume that $A^{\lambda}$ is in the form in Lemma \ref{new}, and $B^{\lambda
}=U^{\ast}A^{\lambda}U$ for some unitary matrix $U.$

For sufficiently large $m^{\prime}$ and sufficiently small $\varepsilon\left(
\leq\frac{r\omega}{64}\right)  ,$ when
\begin{align*}
&  \left(  T_{1},\cdots,T_{n},A,B,C,D\right) \\
&  \in\Gamma^{(top)}\left(  x_{1},\cdots,x_{n},a,u^{\ast}au,\frac{u+u^{\ast}%
}{2},\frac{u-u^{\ast}}{2i}:y_{1},\cdots,y_{p};P_{1}^{\prime},\cdots
,P_{m^{\prime}}^{\prime},k,\varepsilon\right)  .
\end{align*}
where $P_{1}^{\prime},\cdots,P_{m^{\prime}}^{\prime}\in\mathbb{C\langle}%
X_{1},\cdots,X_{n+4},Y_{1},\cdots,Y_{p}\mathbb{\rangle}$. We may assume that
$\left\Vert C+iD\right\Vert \leq\frac{3}{2}$ as $m^{\prime}$ large enough and
$\varepsilon$ small enough. In addition, it is clear that
\[
\left\Vert A\left(  C+iD\right)  -\left(  C+iD\right)  B\right\Vert
\leq\varepsilon
\]
and%
\[
\left(  T_{1},\cdots,T_{n},A,B\right)  \in\Gamma^{(top)}\left(  x_{1}%
,\cdots,x_{n},a,u^{\ast}au:y_{1},\cdots,y_{p};k,\varepsilon,P_{1},\cdots
,P_{m}\right)  .
\]
for some $m$. So there exists some $\lambda\in\Lambda_{k}$ and $V\in
\mathcal{U}_{k}$ such that
\[
\left\Vert \left(  T_{1},\cdots,T_{n},A,B\right)  -\left(  VT_{1}^{\lambda
}V^{\ast},\cdots,VT_{n}^{\lambda}V^{\ast},VA^{\lambda}V^{\ast},VB^{\lambda
}V^{\ast}\right)  \right\Vert _{2}\leq\frac{r\omega}{64}.
\]
By Lemma \ref{new}, $Ball\left(  U^{\lambda};\frac{\omega}{2}\right)  $ of
$\frac{\omega}{2}$-balls in $\mathcal{U}\left(  k\right)  $ that cover
$\Omega\left(  A^{\lambda},B^{\lambda},\frac{r\omega}{8}\right)  $ with the
cardinality of $\Sigma_{k}$ satisfying $\left\vert \Sigma_{k}\right\vert
\leq\left(  \frac{12}{\omega}\right)  ^{4rk^{2}}.$ Since%
\begin{align*}
&  \left\Vert A\left(  C+iD\right)  -\left(  C+iD\right)  B\right\Vert _{2}\\
&  \leq\left\Vert A\left(  C+iD\right)  -\left(  C+iD\right)  B\right\Vert
_{2}\\
&  \leq\left\Vert A\left(  C+iD\right)  -\left(  C+iD\right)  B\right\Vert
\leq\varepsilon\leq\frac{r\omega}{64}\text{, }%
\end{align*}
Then%
\begin{align*}
&  \left\Vert V^{\ast}\left(  C+iD\right)  VA^{\lambda}-B^{\lambda}V^{\ast
}\left(  C+iD\right)  V\right\Vert _{2}\\
&  =\left\Vert \left(  C+iD\right)  VA^{\lambda}V^{\ast}-VB^{\lambda}V^{\ast
}\left(  C+iD\right)  \right\Vert _{2}\leq\frac{r\omega}{16}.
\end{align*}
Since $C+iD$ is very close to a unitary, then we may find an unitary
$U^{\lambda}\in\Omega\left(  A^{\lambda},B^{\lambda},\frac{r\omega}{8}\right)
$ such that
\[
\left\Vert V^{\ast}\left(  C+iD\right)  V-U^{\lambda}\right\Vert _{2}%
<\frac{r\omega}{4}.
\]
It implies that%
\begin{align*}
&  \left\Vert \left(  T_{1},\cdots,T_{n},C,D\right)  -\left(  VT_{1}^{\lambda
}V^{\ast},\cdots,VT_{n}^{\lambda}V^{\ast},V\frac{U^{\lambda}+U^{\lambda\ast}%
}{2}V^{\ast},V\frac{U^{\lambda}-U^{\lambda\ast}}{2i}V^{\ast}\right)
\right\Vert _{2}\\
&  \leq r\omega
\end{align*}

Therefore
\begin{align*}
&  o_{2}\left(  \Gamma^{(top)}\left(  x_{1},\cdots,x_{n},\frac{u+u^{\ast}}%
{2},\frac{u-u^{\ast}}{2i}:a,uau^{\ast},y_{1}\cdots,y_{p};k,\varepsilon
,P_{1},\cdots,P_{m}\right)  ,\omega\right) \\
&  \leq\left\vert \Lambda_{k}\right\vert \left\vert \Sigma_{k}\right\vert
\end{align*}
Hence, we get
\begin{align*}
0  &  \leq\mathfrak{K}_{2}^{(top)}\left(  x_{1},\cdots,x_{n},\frac{u+u^{\ast}%
}{2},\frac{u-u^{\ast}}{2i}:a,uau^{\ast},y_{1}\cdots,y_{p},\omega\right) \\
&  \leq\inf_{m\in\mathbb{N}\text{,}\varepsilon>0}\underset{k\longrightarrow
\mathcal{1}}{\lim\sup}\frac{\log\left(  \left\vert \Lambda_{k}\right\vert
\left\vert \Sigma_{k}\right\vert \right)  }{k^{2}}\\
&  \leq\inf_{m\in\mathbb{N}\text{,}\varepsilon>0}\underset{k\longrightarrow
\mathcal{1}}{\lim\sup}\left(  \frac{\log\left(  \left\vert \Lambda
_{k}\right\vert \right)  }{k^{2}}+4r(\log12-\log\omega)\right) \\
&  =4r(\log12-\log\omega)
\end{align*}
Since $r$ is an arbitrarily small positive number, we have
\[
\mathfrak{K}_{2}^{(top)}\left(  x_{1},\cdots,x_{n},\frac{u+u^{\ast}}{2}%
,\frac{u-u^{\ast}}{2i}:a,uau^{\ast},y_{1}\cdots,y_{p},\omega\right)  =0.
\]
Therefore
\[
\mathfrak{K}_{3}^{(top)}\left(  x_{1},\cdots,x_{n},\frac{u+u^{\ast}}{2}%
,\frac{u-u^{\ast}}{2i}:a,uau^{\ast},y_{1}\cdots,y_{p}\right)  =0.
\]
Hence
\[
\mathfrak{K}_{3}^{(top)}\left(  C^{\ast}\left(  \mathcal{A\cup}\left\{
u\right\}  \right)  \right)  =0.
\]

\end{proof}

\begin{theorem}
\label{5}Let $\mathcal{N}$ be an MF C*-algebra and $\mathcal{A\subseteq N}$ be
a C*-subalgebra where $\mathcal{A}$ has no finite-dimensional
representations$.$ If there is an unitary $u\in\mathcal{N}$ such that
$u\mathcal{A}u^{\ast}\subseteq\mathcal{A}$ for some . Then
\[
\mathfrak{K}_{3}^{(top)}\left(  C^{\ast}\left(  \mathcal{A}\cup\left\{
u\right\}  \right)  \right)  \leq\mathfrak{K}_{3}^{(top)}\left(
\mathcal{A}\right)
\]

\end{theorem}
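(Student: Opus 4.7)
The plan is to reduce the theorem to the preceding Theorem \ref{qq} by first producing a distinguished self-adjoint element $a\in\mathcal{A}$ with $\sigma(a)=[0,1]$ and then feeding it into Theorem \ref{qq} with the unitary $v=u^{*}$. Note that $\mathfrak{K}_{top}^{(3)}$ is defined through the factor $\infty\cdot\mathfrak{K}_{top}^{(2)}$, so it only takes the values $0$ and $\infty$; consequently the only nontrivial case is $\mathfrak{K}_{top}^{(3)}(\mathcal{A})=0$, where one needs to show that $\mathfrak{K}_{top}^{(3)}(C^{*}(\mathcal{A}\cup\{u\}))=0$.

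To build $a$, I would follow the first paragraph of the proof of Lemma \ref{4} almost verbatim, applying it to $\mathcal{A}$ in place of $C^{*}(x_{1},\cdots,x_{n})\cap C^{*}(y_{1},\cdots,y_{p})$. Choose a tracial state $\tau$ on $C^{*}(\mathcal{A}\cup\{u\})$ (which exists because $\mathcal{N}$ is MF, so $C^{*}(\mathcal{A}\cup\{u\})$ is MF and carries MF-tracial states whose restrictions to $\mathcal{A}$ remain traces), and form the GNS representation $\pi$ of $\mathcal{A}$ associated to $\tau|_{\mathcal{A}}$. The induced vector state is faithful on $\pi(\mathcal{A})^{\prime\prime}$, so $\pi(\mathcal{A})^{\prime\prime}$ is a finite von Neumann algebra. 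Because $\mathcal{A}$ has no finite-dimensional representations, $\pi(\mathcal{A})^{\prime\prime}$ admits no nonzero finite-dimensional invariant subspace, and Remark \ref{96} then yields a self-adjoint element of $\pi(\mathcal{A})^{\prime\prime}$ with no eigenvalues. Voiculescu's semicontinuity of $\delta_{0}$, exactly as invoked in the proof of Lemma \ref{4}, produces a self-adjoint $b\in\mathcal{A}$ with no eigenvalues, hence an interval $[\alpha,\beta]\subseteq\sigma(b)$; applying the same piecewise linear cutoff $f$ used there gives $a=f(b)\in\mathcal{A}$ with $\sigma(a)=[0,1]$.

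To finish, set $v=u^{*}$. The hypothesis $u\mathcal{A}u^{*}\subseteq\mathcal{A}$ yields $v^{*}av=uau^{*}\in\mathcal{A}$, so the hypotheses of Theorem \ref{qq} are met for $\mathcal{A}$, $a$, and $v$. That theorem then supplies
$$\mathfrak{K}_{top}^{(3)}(C^{*}(\mathcal{A}\cup\{v\}))\leq\mathfrak{K}_{top}^{(3)}(\mathcal{A}),$$
and since $C^{*}(\mathcal{A}\cup\{v\})=C^{*}(\mathcal{A}\cup\{u\})$, the desired bound follows. The main obstacle is really the construction of $a$: one must verify both the existence of a suitable tracial state and the passage from ``no eigenvalues in $\pi(\mathcal{A})^{\prime\prime}$'' down to an interval in the spectrum of some self-adjoint $b\in\mathcal{A}$ itself via Voiculescu's semicontinuity for $\delta_{0}$. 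But these are exactly the two points already carried out inside the proof of Lemma \ref{4}, so only cosmetic adjustments are needed; once $a$ is in hand, Theorem \ref{qq} does the rest.
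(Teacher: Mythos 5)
Your proposal is correct and follows essentially the same route as the paper: reduce to the case $\mathfrak{K}_{top}^{(3)}(\mathcal{A})=0$, reuse the first part of the proof of Lemma \ref{4} to produce a self-adjoint $a\in\mathcal{A}$ with $\sigma(a)=[0,1]$, and then conclude by Theorem \ref{qq}. Your substitution $v=u^{*}$, reconciling the hypothesis $u\mathcal{A}u^{*}\subseteq\mathcal{A}$ with the condition $v^{*}av\in\mathcal{A}$ required in Theorem \ref{qq} and using $C^{*}(\mathcal{A}\cup\{v\})=C^{*}(\mathcal{A}\cup\{u\})$, is a cosmetic adjustment that the paper leaves implicit.
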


\begin{proof}
If $\mathfrak{K}_{3}^{(top)}\left(  \mathcal{A}\right)  =\mathcal{1}$, the
inequality is clear. Now suppose that $\mathfrak{K}_{3}^{(top)}\left(
\mathcal{A}\right)  =0.$ Same as the first part proof of Lemma \ref{4}, we can
find an element $a\in\mathcal{A}$ satisfying $\sigma\left(  a\right)  =\left[
0,1\right]  $. Then the inequality holds by Theorem \ref{qq}.
\end{proof}

\begin{corollary}
Suppose $\mathcal{A}$ is a unital MF algebra with no finite-dimensional
representations and $\mathcal{B}$ is a unital MF algebra. Suppose $G$ is a
countable group of actions $\left\{  \alpha_{g}\right\}  _{g\in G}$ on
$\mathcal{A}$. Suppose $\mathcal{D=A\rtimes}_{\alpha}G$ is either a full or
reduced crossed product of $\mathcal{A}$ by the actions of $G.$ If there is a
onto *-homomorphism $\pi:\mathcal{A\rtimes}_{\alpha}G\longrightarrow
\mathcal{B}$, then
\[
\mathfrak{K}_{3}^{(top)}\left(  \mathcal{B}\right)  \leq\mathfrak{K}%
_{3}^{(top)}\left(  \mathcal{A}\right)  .
\]

\end{corollary}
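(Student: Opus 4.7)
The plan is to work inside the MF algebra $\mathcal{B}$, pass to the image $\mathcal{A}' := \pi(\mathcal{A})$, iterate Theorem \ref{qq}, and close with a quotient-monotonicity step. First, $\mathcal{A}'$ is a C*-subalgebra of the MF algebra $\mathcal{B}$, hence MF; and any nonzero finite-dimensional representation of $\mathcal{A}'$ would pull back along $\pi|_\mathcal{A}$ to one of $\mathcal{A}$, so $\mathcal{A}'$ inherits the property of having no finite-dimensional representations. The unitaries $v_g := \pi(u_g) \in \mathcal{B}$ satisfy $v_g \pi(a) v_g^* = \pi(\alpha_g(a))$, so $v_g \mathcal{A}' v_g^* = \mathcal{A}'$, and $\mathcal{B} = C^*(\mathcal{A}' \cup \{v_g : g \in G\})$. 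By the opening argument in the proof of Lemma \ref{4} applied to $\mathcal{A}'$, I would select a self-adjoint $a \in \mathcal{A}'$ with $\sigma(a) = [0,1]$.

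Enumerating $G = \{g_1, g_2, \ldots\}$, set $\mathcal{B}_0 = \mathcal{A}'$ and $\mathcal{B}_n = C^*(\mathcal{B}_{n-1} \cup \{v_{g_n}\})$. For each $n \geq 1$, $a \in \mathcal{A}' \subseteq \mathcal{B}_{n-1}$ and $v_{g_n}^* a v_{g_n} \in \mathcal{A}' \subseteq \mathcal{B}_{n-1}$, so Theorem \ref{qq} (with ambient $\mathcal{N} = \mathcal{B}$, which is MF) yields $\mathfrak{K}_{top}^{(3)}(\mathcal{B}_n) \leq \mathfrak{K}_{top}^{(3)}(\mathcal{B}_{n-1})$. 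Iterating gives $\mathfrak{K}_{top}^{(3)}(\mathcal{B}_n) \leq \mathfrak{K}_{top}^{(3)}(\mathcal{A}')$, and Corollary \ref{7} applied to $\mathcal{B} = \overline{\bigcup_n \mathcal{B}_n}$ then produces $\mathfrak{K}_{top}^{(3)}(\mathcal{B}) \leq \mathfrak{K}_{top}^{(3)}(\mathcal{A}')$. Note that the fact that Theorem \ref{qq} only requires invariance of the single element $a$ (rather than the stronger invariance $v_{g_n} \mathcal{B}_{n-1} v_{g_n}^* \subseteq \mathcal{B}_{n-1}$ needed by Theorem \ref{5}) is crucial here, because $v_{g_n}$ typically does not normalize the subalgebras $\mathcal{B}_{n-1}$ generated by previously added unitaries.

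The remaining obstacle, and the main technical step, is the quotient bound $\mathfrak{K}_{top}^{(3)}(\mathcal{A}') \leq \mathfrak{K}_{top}^{(3)}(\mathcal{A})$. The natural route is via semi-microstates: since $\pi$ is norm-decreasing on polynomials, any semi-microstate $(\vec{A}, \vec{B})$ for $(\pi(\vec{x}), \pi(\vec{y}))$ --- one satisfying $\|P(\vec{A}, \vec{B})\| \leq \|P(\pi(\vec{x}), \pi(\vec{y}))\| + \varepsilon$ --- is automatically a semi-microstate for $(\vec{x}, \vec{y})$, and after projecting onto the $\vec{x}$-coordinates this inclusion forces the corresponding orbit-covering numbers to decrease under $\pi$. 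Thus the semi-microstate analogs of $\mathfrak{K}_{top}^{(2)}$ and $\mathfrak{K}_{top}^{(3)}$ are quotient-monotone. Invoking the orbit analog of the identity $\delta_{top}^{1/2} = \delta_{top}$ for MF algebras --- provable by the same compactness and successive-subdivision argument as in \cite{HLSW} --- upgrades this to $\mathfrak{K}_{top}^{(3)}(\mathcal{A}') \leq \mathfrak{K}_{top}^{(3)}(\mathcal{A})$. Establishing this orbit semi-microstate equivalence is the chief technical point; combined with the iteration above, it yields $\mathfrak{K}_{top}^{(3)}(\mathcal{B}) \leq \mathfrak{K}_{top}^{(3)}(\mathcal{A})$.
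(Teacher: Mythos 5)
Your reduction inside $\mathcal{B}$ is correct and is essentially the paper's own argument for the crossed-product part: the paper likewise passes to $\mathcal{A}'=\pi\left(\mathcal{A}\right)$, notes it has no finite-dimensional representations, produces an element of $\mathcal{A}'$ with spectrum $[0,1]$ by the opening argument of Lemma \ref{4}, adjoins the unitaries $\pi\left(u_{g}\right)$ one group element at a time, and finishes with Corollary \ref{7}. The only difference is bookkeeping: the paper applies Theorem \ref{5} to each $C^{\ast}\left(\pi\left(\mathcal{A}\right)\cup\left\{\pi\left(u_{g_{i}}\right)\right\}\right)$ separately and amalgamates these algebras with Theorem \ref{6} (their intersections contain $\pi\left(\mathcal{A}\right)$, hence have no finite-dimensional representations), whereas you iterate Theorem \ref{qq} along the tower $\mathcal{B}_{n}=C^{\ast}\left(\mathcal{B}_{n-1}\cup\left\{v_{g_{n}}\right\}\right)$, using that the single element $a$ is conjugated by $v_{g_{n}}$ back into $\mathcal{A}'\subseteq\mathcal{B}_{n-1}$. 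That variant is legitimate and slightly more economical, and your observation that Theorem \ref{qq} only needs invariance of the one element $a$ is exactly right.

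The genuine gap is the quotient step $\mathfrak{K}_{top}^{\left(3\right)}\left(\pi\left(\mathcal{A}\right)\right)\leq\mathfrak{K}_{top}^{\left(3\right)}\left(\mathcal{A}\right)$, which you yourself call the chief technical point but do not prove. You reduce it to an ``orbit analog'' of the identity $\delta_{\text{top}}^{1/2}=\delta_{\text{top}}$, i.e.\ to the claim that $\mathfrak{K}_{top}^{(2)}$ --- and, since $\mathfrak{K}_{top}^{\left(3\right)}$ is built from the relative quantities $\mathfrak{K}_{top}^{(2)}\left(E:F\right)$, also its in-the-presence-of version --- can be computed from semi-microstates with $o_{2}$ orbit coverings. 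The cited equivalence in \cite{HLSW} is only for $\delta_{\text{top}}$ with $\nu_{\mathcal{1}}$-coverings; neither that paper nor this one contains the orbit-covering version, and asserting that ``the same compactness and successive-subdivision argument'' works is not a proof. Note also that the soft alternative via Theorem \ref{li} does not close it, since that theorem bounds $\mathfrak{K}_{top}^{(2)}$ from above by MF-trace quantities and gives no lower bound, so vanishing of $\mathfrak{K}_{top}^{(2)}\left(E:F\right)$ for $\mathcal{A}$ cannot be transported to traces pulled back from $\mathcal{B}$. For what it is worth, the paper's own proof does not address this step either: it applies Theorem \ref{5} to $\pi\left(\mathcal{A}\right)$ and simply writes $\mathfrak{K}_{3}^{(top)}\left(\pi\left(\mathcal{A}\right)\cup\left\{\pi\left(g\right)\right\}\right)=0$, tacitly treating the invariant of the quotient $\pi\left(\mathcal{A}\right)$ as controlled by that of $\mathcal{A}$. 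So you have correctly isolated a point the paper glosses over, but as written your proposal leaves that point unestablished and therefore does not yet prove the stated inequality.
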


\begin{proof}
Since $\mathcal{A}$ has no finite-dimensional representations, then
$\pi\left(  \mathcal{A}\right)  $ has no finite-dimensional representations.
Therefore, we can find an element $\pi\left(  a\right)  \in\pi\left(
\mathcal{A}\right)  $ with $\sigma\left(  \pi\left(  a\right)  \right)
=[0,1].$ Note that $\pi\left(  g^{-1}\right)  \pi\left(  a\right)  \pi\left(
g\right)  \subseteq\pi\left(  \mathcal{A}\right)  ,$ then by Theorem \ref{5},
$\mathfrak{K}_{3}^{(top)}\left(  \pi\left(  \mathcal{A}\right)  \cup\left\{
\pi\left(  g\right)  \right\}  \right)  =0.$ From Theorem \ref{6}, we know
that
\[
\mathfrak{K}_{3}^{(top)}\left(  \pi\left(  \mathcal{A}\right)  \cup\left\{
\pi\left(  g_{1}\right)  \right\}  \cup\left\{  \pi\left(  g_{2}\right)
\right\}  \right)  =0.
\]
Let
\[
\mathcal{B}_{n}=C^{\ast}\left(  \pi\left(  \mathcal{A}\right)  \cup\left\{
\pi\left(  g_{1}\right)  \right\}  \cup\cdots\cup\left\{  \pi\left(
g_{n}\right)  \right\}  \right)  .
\]
Then $\mathfrak{K}_{3}^{(top)}\left(  \mathcal{B}_{n}\right)  =0.$

Therefore
\[
\mathfrak{K}_{3}^{(top)}\left(  \mathcal{B}\right)  =\lim\inf_{n}%
\mathfrak{K}_{3}^{(top)}\left(  \mathcal{B}_{n}\right)  =0
\]
by the fact that $\mathcal{B=}\overline{\mathcal{\cup B}_{n}}^{\left\Vert
\cdot\right\Vert }$ and Corollary \ref{7}.
\end{proof}

\section{Applications to Central Sequence Algebras}

In \cite{WS1} Weihua Li and Junhao Shen proved that a separable approximately
divisible C*-algebra $\mathcal{A}$ is singly generated and that if
$\mathcal{A}=C^{\ast}\left(  x_{1},\ldots,x_{n}\right)  $ is an MF-algebra,
then%
\[
\delta^{\text{\textrm{top}}}\left(  x_{1},\ldots,x_{n}\right)  =1.
\]
Later Li and Shen \cite{WS} proved that if $\mathcal{A}$ is an approximately
divisible unital C*-algebra, then Pisier's similarity degree $d\left(
\mathcal{A}\right)  $ is at most $5$. After that Don Hadwin and Weihua Li
\cite{HW2} defined the larger class of weakly approximately divisible
C*-algebras and proved for these algebras the similarity degree is at most
$5$. More recently, Wenhua Qian and Junhao Shen \cite{QS} defined the still
larger class of C*-algebras with property c*-$\Gamma$ and proved that the
similarity degree is at most $3$.

In this section we want to view these results in terms of the central sequence
algebra of a separable unital C*-algebra $\mathcal{A}$. Let $c_{\omega}\left(
\mathcal{A}\right)  $ denote the closed two-sided ideal of the C*-algebra
$l^{\mathcal{1}}\left(  \mathcal{A}\right)  $ given by
\[
c_{\omega}\left(  \mathcal{A}\right)  =\left\{  \left(  a_{n}\right)
_{n\geq1}\in l^{\mathcal{1}}\left(  \mathcal{A}\right)  |\lim
_{n\longrightarrow\omega}\left\Vert a_{n}\right\Vert =0\right\}  .
\]
The ultrapower $\mathcal{A}_{\omega}$ is defined to be the quotient C*-algebra
$l^{\mathcal{1}}\left(  \mathcal{A}\right)  /c_{\omega}\left(  \mathcal{A}%
\right)  ,$ and we denote by $\pi_{\omega}$ the quotient mapping
$l^{\mathcal{1}}\left(  \mathcal{A}\right)  \longrightarrow\mathcal{A}%
_{\omega}.$ Let $l:\mathcal{A}\longrightarrow l^{\mathcal{1}}\left(
\mathcal{A}\right)  $ denote the "diagonal" inclusion mapping $l\left(
a\right)  =\left(  a,a,\cdots\right)  \in l^{\mathcal{1}}\left(
\mathcal{A}\right)  ,$ $a\in\mathcal{A}$; and put $l_{\omega}=\pi_{\omega
}\circ l:\mathcal{A}\longrightarrow\mathcal{A}_{\omega}.$ Both mapping $l$ and
$l_{\omega}$ are injective. If we view $\mathcal{A}$ as a subalgebra of
$\mathcal{A}_{\omega},$ then the relative commutant is defined by
$\mathcal{A}_{\omega}\cap\mathcal{A}^{\prime}$ which is called a central
sequence algebra of $\mathcal{A}$. Suppose $\tau$ is a tracial state on
$\mathcal{A}$ and $\mathcal{N}$ is the weak closure of $\mathcal{A}$ in the
GNS representation determined by $\tau$. The algebra $\mathcal{N}^{\omega
}:=l^{\mathcal{1}}\left(  \mathcal{N}\right)  /c_{\tau,\omega}\left(
\mathcal{N}\right)  $ (with $c_{\tau,\omega}\left(  \mathcal{N}\right)  $ the
bounded sequences in $\mathcal{N}$ with $\lim_{\omega}\left\Vert a\right\Vert
_{2,\tau}=0)$ is a W*-algebra when $\omega$ is a free ultrafilter.

If $\mathcal{M}$ is a $II_{1}$ factor, then $\mathcal{M}$ has \emph{property
}$\Gamma$ if and only if $\mathcal{M}^{\omega}\cap\mathcal{M}^{\prime}$ has a
representing sequence $\left(  U_{1},U_{2},\ldots\right)  $ such that each
$U_{n}$ is a Haar unitary element of $\mathcal{M}$ (i.e., $\tau\left(
U^{n}\right)  =0$ for all $n\in\mathbb{N}$). If $\mathcal{M}$ is a $II_{1}$
von Neumann algebra with a separable predual, then $\mathcal{M}$ is defined in
\cite{QS} to have \emph{property} $\Gamma$ if and only if each $II_{1}$ factor
in the central decomposition of $\mathcal{M}$ has property $\Gamma$. It
follows from direct integral theory that if $\mathcal{M}$ has property
$\Gamma$, then $\mathcal{M}^{\omega}\cap\mathcal{M}^{\prime}$ contains a
representing sequence of Haar unitaries. The following Theorem duo to Dixmier
\cite{Dix} and Connes \cite{Connes}.

\begin{theorem}
\label{150}Let $\mathcal{M}$ be a separable II$_{1}$ factor. The following
conditions are equivalent:

\begin{enumerate}
\item $\mathcal{M}$ has property $\Gamma;$

\item $\mathcal{M}^{\omega}\cap\mathcal{M}^{\prime}\neq\mathbb{C}I;$

\item $\mathcal{M}^{\omega}\cap\mathcal{M}^{\prime}$ is a diffuse von Neumann algebra.
\end{enumerate}
\end{theorem}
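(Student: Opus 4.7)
The plan is to prove the cycle (3) $\Rightarrow$ (2) $\Rightarrow$ (1) $\Rightarrow$ (3), which combines classical arguments of Dixmier (for the equivalence of (1) and (2)) and Connes (for the upgrade to (3)).

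The implication (3) $\Rightarrow$ (2) is immediate, since a diffuse von Neumann algebra strictly contains $\mathbb{C}I$. For (1) $\Rightarrow$ (2), if $(u_n) \subseteq \mathcal{M}$ is the sequence of Haar unitaries provided by property $\Gamma$, then its image $U$ in $\mathcal{M}^{\omega}$ lies in $\mathcal{M}^{\omega} \cap \mathcal{M}'$ by asymptotic centrality, while $\tau^{\omega}(U) = \lim_{n \to \omega} \tau(u_n) = 0$ together with $U^{*}U = I$ forces $U \notin \mathbb{C}I$.

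For (2) $\Rightarrow$ (1) I would begin with any non-scalar $a \in \mathcal{M}^{\omega} \cap \mathcal{M}'$, replace $a$ by its real or imaginary part to get a non-scalar self-adjoint element, and apply functional calculus to a non-trivial spectral interval to produce a projection $P \in \mathcal{M}^{\omega} \cap \mathcal{M}'$ with $\tau^{\omega}(P) \in (0,1)$. A standard lifting yields a sequence of projections $(p_n)$ in $\mathcal{M}$ which is asymptotically central, with $\tau(p_n) \to \tau^{\omega}(P)$. Using that $\mathcal{M}$ is a II$_{1}$ factor, pass from $(p_n)$ to an asymptotically central sequence $(e_n)$ of projections with $\tau(e_n) = 1/2$; the self-adjoint unitary $u_n = 2 e_n - I$ then has vanishing trace and asymptotically commutes with $\mathcal{M}$, witnessing property $\Gamma$.

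The heart of the argument is (1) $\Rightarrow$ (3), due to Connes. The plan is to show that $\mathcal{M}^{\omega} \cap \mathcal{M}'$ has no minimal projections; given this, diffuseness follows from a maximal orthogonal family argument. Concretely, given a projection $P \in \mathcal{M}^{\omega} \cap \mathcal{M}'$ with $\tau^{\omega}(P) = t > 0$ and any $0 < s < t$, I would lift $P$ to an asymptotically central $(p_n)$, choose subprojections $r_n \leq p_n$ in the II$_{1}$ factor $\mathcal{M}$ with $\tau(r_n) = (s/t)\tau(p_n)$, and then correct $(r_n)$ by averaging or conjugating against the asymptotically central Haar unitaries supplied by property $\Gamma$, producing a sub-sequence that is itself asymptotically central and hence a subprojection $Q \leq P$ in $\mathcal{M}^{\omega} \cap \mathcal{M}'$ with $\tau^{\omega}(Q) = s$. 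The main obstacle is precisely this averaging step: an arbitrary choice of $r_n \leq p_n$ need not commute asymptotically with $\mathcal{M}$, and it is exactly here that property $\Gamma$ is invoked in an essential way to repair asymptotic centrality under halving or arbitrary subdivision of the representing projections.
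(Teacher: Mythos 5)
The paper does not prove this theorem at all: it is quoted as a known result and attributed to Dixmier \cite{Dix} and Connes \cite{Connes}, so there is no in-paper argument to compare with; your outline has to stand on its own, and as written it leaves the two pivotal steps unproved. In (2) $\Rightarrow$ (1), the passage from an asymptotically central sequence of projections $p_n$ with $\tau(p_n)\to t\in(0,1)$ to an asymptotically central sequence $e_n$ with $\tau(e_n)=1/2$ is not a routine appeal to the II$_1$ factor structure: enlarging or shrinking $p_n$ inside $\mathcal{M}$ to adjust its trace is exactly the kind of arbitrary modification that destroys asymptotic centrality, and making this step work is the substance of Dixmier's theorem, not a corollary of it. Moreover, the paper's stated definition of property $\Gamma$ asks for a representing sequence of \emph{Haar} unitaries; the symmetries $u_n=2e_n-I$ are trace-zero but not Haar, so even granting the trace-$1/2$ projections you would still need either the (nontrivial, Dixmier) equivalence of the two formulations or a further dyadic refinement of central projections to build Haar unitaries.

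The same difficulty is fatal, as stated, to your (1) $\Rightarrow$ (3). Choosing arbitrary subprojections $r_n\le p_n$ of the right trace and then ``averaging or conjugating against the asymptotically central Haar unitaries'' cannot restore centrality: a unitary $v_n$ that asymptotically commutes with every fixed element of $\mathcal{M}$ acts asymptotically trivially under conjugation on such elements, so conjugating $r_n$ by $v_n$ leaves its centrality defect essentially unchanged, and no averaging procedure is specified that would do better. The classical argument runs differently: from property $\Gamma$ one first gets (Dixmier) almost-central partitions of unity into projections of prescribed trace; then, by a reindexation/diagonalization over the ultrafilter, one chooses these almost-central projections so that they also almost commute with the fixed representing sequence $(p_n)$ of $P$; cutting, i.e.\ perturbing $p_n e_n p_n$ to a genuine projection under $p_n$, then yields a subprojection $Q\le P$ in $\mathcal{M}^{\omega}\cap\mathcal{M}'$ of intermediate trace, whence no minimal projections and diffuseness. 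You have correctly located where the work lies, but the mechanism you propose for that step does not work, so the heart of the Dixmier--Connes theorem is still missing from the proposal.
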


let $\pi_{\tau}\left(  \mathcal{A}\right)  ^{\prime\prime}$ be the weak
closure of $\mathcal{A}$ under the GNS representation of $\mathcal{A}$ with
respect to the state $\tau.$

In \cite{QS}, a separable unital C*-algebra is said to have \emph{property
c*-}$\Gamma$ if, for every tracial state $\tau$ on $\mathcal{A}$ such that
$\pi_{\tau}\left(  \mathcal{A}\right)  ^{\prime\prime}$ is a $II_{1}$ factor,
$\pi_{\tau}\left(  \mathcal{A}\right)  ^{\prime\prime}$ has property $\Gamma$,
which is equivalent to $\pi_{\tau}\left(  \mathcal{A}\right)  ^{\prime\prime}$
having property $\Gamma$ whenever $\pi_{\tau}\left(  \mathcal{A}\right)
^{\prime\prime}$ is a $II_{1}$ von Neumann algebra. If $\mathcal{A}$ has no
finite-dimensional representations, then so does $\pi_{\tau}\left(
\mathcal{A}\right)  ^{\prime\prime}$. Therefore $\pi_{\tau}\left(
\mathcal{A}\right)  ^{\prime\prime}$ is $II_{1}$ for every tracial state
$\tau$ on $\mathcal{A}$. So if $\mathcal{A}$ has no finite-dimensional
representations and property c*-$\Gamma,$ then $\pi_{\tau}\left(
\mathcal{A}\right)  ^{\prime\prime}$ has property $\Gamma$ for every tracial
state $\tau$ on $\mathcal{A}$. Actually we can say more in this case.

\begin{lemma}
\label{99}Suppose $\mathcal{A}$ is a separable unital C*-algebra having no
finite-dimensional representations. Then $\mathcal{A}$ has property
c*-$\Gamma$ if and only if for every tracial state $\tau$ on $\mathcal{A}$,
the central sequence algebra of $\pi_{\tau}\left(  \mathcal{A}\right)
^{\prime\prime}$ has no finite-dimensional representations.
\end{lemma}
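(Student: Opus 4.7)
Write $\mathcal{M}_\tau := \pi_\tau(\mathcal{A})''$ throughout. The strategy is to translate the property c*-$\Gamma$ statement, which concerns the structure of $\mathcal{M}_\tau$, into a structural statement about the central sequence algebra $\mathcal{M}_\tau^\omega \cap \mathcal{M}_\tau'$ using the central decomposition together with Theorem \ref{150}, and then to observe that this structural property is equivalent to having no finite-dimensional $*$-representations. As a preliminary reduction, note that since $\mathcal{A}$ has no finite-dimensional representations, neither does $\mathcal{M}_\tau$; hence, as mentioned in the excerpt, $\mathcal{M}_\tau$ is a $II_1$ von Neumann algebra for every tracial state $\tau$, and property c*-$\Gamma$ for $\mathcal{A}$ is equivalent to saying every $II_1$ factor appearing in the central decomposition of every $\mathcal{M}_\tau$ has property $\Gamma$.

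For the forward direction, I assume $\mathcal{A}$ has property c*-$\Gamma$ and fix $\tau$. Writing the central decomposition $\mathcal{M}_\tau = \int^\oplus \mathcal{M}_x\, d\nu(x)$ as a direct integral of $II_1$ factors each with property $\Gamma$, Theorem \ref{150} gives that $\mathcal{M}_x^\omega \cap \mathcal{M}_x'$ is a diffuse von Neumann algebra for every $x$. Using the direct integral theory referenced in the excerpt (the same that yields the representing sequences of Haar unitaries), I argue that $\mathcal{M}_\tau^\omega \cap \mathcal{M}_\tau'$ inherits a fiberwise-diffuse decomposition and is itself a type $II_1$ von Neumann algebra. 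It then suffices to invoke the fact that a type $II_1$ von Neumann algebra $\mathcal{N}$ admits no finite-dimensional $*$-representations: if $\pi : \mathcal{N} \to \mathcal{M}_n(\mathbb{C})$ were unital, I would pick a minimal projection $e \in \pi(\mathcal{N})$, lift it by functional calculus to a projection $p \in \mathcal{N}$, and then use the continuous-dimension function of the type $II_1$ algebra to produce subprojections $q_k \le p$ with $\pi(q_k) = e$ and trace $\tau(q_k) \to 0$, contradicting the fact that in $\mathcal{M}_n(\mathbb{C})$ there are only finitely many equivalence classes of projections and that $\tau_n \circ \pi$ is a tracial state on $\mathcal{N}$ with $\tau_n(e) > 0$ fixed.

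For the reverse direction, I prove the contrapositive: suppose $\mathcal{A}$ does not have property c*-$\Gamma$, so some $\mathcal{M}_\tau$ fails property $\Gamma$. Then, in the central decomposition of $\mathcal{M}_\tau$, there is a Borel set $E$ of positive measure on which the fibers $\mathcal{M}_x$ are $II_1$ factors without property $\Gamma$. By Theorem \ref{150}, $\mathcal{M}_x^\omega \cap \mathcal{M}_x' = \mathbb{C}I$ for $x \in E$. Via the same direct integral identification used in the forward direction, $\mathcal{M}_\tau^\omega \cap \mathcal{M}_\tau'$ then contains a direct summand (corresponding to the restriction to $E$) isomorphic to the commutative von Neumann algebra $L^\infty(E, \nu|_E)$. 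Every nonzero unital commutative $C^*$-algebra admits characters, and these can be extended (by composing with the canonical projection onto the $E$-summand) to give finite-dimensional $*$-representations of $\mathcal{M}_\tau^\omega \cap \mathcal{M}_\tau'$.

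The main obstacle is justifying the direct-integral identification of $\mathcal{M}_\tau^\omega \cap \mathcal{M}_\tau'$ in terms of the fiberwise central sequence algebras $\mathcal{M}_x^\omega \cap \mathcal{M}_x'$. Ultrapower constructions do not commute with direct integrals in the most naive sense, so some care is required to extract the $L^\infty(E)$-summand on the ``bad'' locus for the reverse direction and, dually, to conclude the whole central sequence algebra is type $II_1$ for the forward direction; both steps lean on the direct integral theory alluded to just before the statement of Theorem \ref{150}. Once this identification is in place, the factor-by-factor application of Theorem \ref{150} together with the standard non-existence of finite-dimensional representations for type $II_1$ von Neumann algebras completes the argument.
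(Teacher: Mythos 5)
Your skeleton is the same as the paper's: pass to the central decomposition, apply Theorem \ref{150} fibrewise, and translate between the central sequence algebra of $\mathcal{M}_\tau=\pi_\tau(\mathcal{A})''$ and those of its fibres via direct integral theory. But your forward direction inserts a step that is false: from property $\Gamma$ of the fibres you conclude that $\mathcal{M}_\tau^\omega\cap\mathcal{M}_\tau'$ is a type $II_1$ von Neumann algebra. Property $\Gamma$ yields only diffuseness of the central sequence algebra (that is exactly what Theorem \ref{150} provides); by McDuff's theorem a separable $II_1$ factor has a noncommutative (type $II_1$) central sequence algebra precisely when it is McDuff, i.e. absorbs $R$ tensorially, and this is strictly stronger than property $\Gamma$ --- for the Dixmier--Lance examples ($\Gamma$ but not McDuff) the algebra $\mathcal{M}^\omega\cap\mathcal{M}'$ is diffuse \emph{abelian}. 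Hence your mechanism for excluding finite-dimensional representations (unital copies of arbitrarily large matrix algebras via continuous dimension) is unavailable; diffuseness only rules out \emph{normal} finite-dimensional representations, which is the reading under which the paper's one-line deduction from Theorem \ref{150} is coherent. (Your argument that a type $II_1$ algebra has no finite-dimensional representations also has a slip: subprojections $q_k\le p$ need not satisfy $\pi(q_k)=e$; the standard argument instead uses $k$ equivalent orthogonal projections summing to $1$, i.e. a unital copy of $M_k(\mathbb{C})$.)

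In the reverse direction, the identification of the $E$-summand with $L^\infty(E,\nu|_E)$ is not correct as stated: the summand is the central sequence algebra of $\mathcal{M}_E=\int_E^\oplus\mathcal{M}_x\,d\nu$, which already contains the much larger algebra $\bigl(L^\infty(E)\bigr)^\omega$. What is actually needed --- and where the real work lies, e.g. Connes' spectral-gap characterization of non-$\Gamma$ applied fibrewise with measurable choices --- is that this summand is a nonzero \emph{abelian} (or at least finite-dimensionally representable) algebra, after which characters extend through the direct-sum projection as you say. You correctly flag the ultrapower/direct-integral interchange as the main obstacle, but you do not supply it; the paper leaves the same point implicit, so this by itself would only make your argument as terse as the paper's. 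Combined, however, with the false ``type $II_1$'' claim in the forward direction, the proposal does not yet constitute a proof.
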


\begin{proof}
If $\mathcal{A}$ has property c*-$\Gamma$ , then $\pi_{\tau}\left(
\mathcal{A}\right)  ^{\prime\prime}$ has property $\Gamma$ for every tracial
state $\tau$ on $\mathcal{A}$. So the central sequence algebra of each
$II_{1}$ factor in the central decomposition of $\pi_{\tau}\left(
\mathcal{A}\right)  ^{\prime\prime}$ has no finite-dimensional representations
by Theorem \ref{150}.

On the other hand, if the central sequence algebra of $\pi_{\tau}\left(
\mathcal{A}\right)  ^{\prime\prime}$ has no finite-dimensional representations
for every tracial state, then the central sequence algebra of each $II_{1}$
factor in the central decomposition of $\pi_{\tau}\left(  \mathcal{A}\right)
^{\prime\prime}$ has no finite-dimensional representations. So by Theorem
\ref{150}, $\mathcal{A}$ has property c*-$\Gamma$
\end{proof}

\begin{remark}
\label{98}Let $\mathcal{A}$ be a separable unital C*-algebra, let $\tau$ be a
tracial state on $\mathcal{A}$, let $\mathcal{N}$ be the weak closure of
$\mathcal{A}$ under the GNS representation of $\mathcal{A}$ with respect to
the state $\tau,$ and let $\omega$ be a free ultrafilter on $\mathbb{N}$. It
follows that there are the following two natural *-homomorphisms
\[
\mathcal{A}_{\omega}\longrightarrow\mathcal{N}^{\omega},\text{ \ \ \ \ \ }%
\mathcal{A}_{\omega}\cap\mathcal{A}^{\prime}\longrightarrow\mathcal{N}%
^{\omega}\cap\mathcal{N}^{\prime}.
\]

\end{remark}

\begin{lemma}
(\cite{KR1})\label{95}Let $\mathcal{A}$ be a separable unital C*-algebra, let
$\tau$ be a faithful tracial state on $\mathcal{A}$, let $\mathcal{N}$ be the
weak closure of $\mathcal{A}$ under the GNS representation of $\mathcal{A}$
with respect to the state $\tau,$ and let $\omega$ be a free ultrafilter on
$\mathbb{N}$. It follows that the natural *-homomorphisms%
\[
\mathcal{A}_{\omega}\longrightarrow\mathcal{N}^{\omega},\text{ \ \ \ \ \ }%
\mathcal{A}_{\omega}\cap\mathcal{A}^{\prime}\longrightarrow\mathcal{N}%
^{\omega}\cap\mathcal{N}^{\prime}%
\]
are surjective.
\end{lemma}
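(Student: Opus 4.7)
The plan is to handle the two surjectivity statements separately. The first map is a soft consequence of Kaplansky density, while the second requires a Kirchberg-type $\varepsilon$-test to promote $2$-norm approximate commutation up to operator-norm approximate commutation.

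\emph{First map.} Let $\zeta \in \mathcal{N}^{\omega}$ and choose a representing bounded sequence $(y_n)\subseteq\mathcal{N}$ with $\sup_n\|y_n\|\le M$. Since $\tau$ is faithful, the GNS representation is isometric on $\mathcal{A}$ and $\mathcal{A}$ is strong-operator dense in $\mathcal{N}$. By Kaplansky density, for each $n$ there exists $a_n\in\mathcal{A}$ with $\|a_n\|\le M$ and $\|a_n-y_n\|_{2,\tau}<1/n$. Then $(a_n)\in \ell^{\infty}(\mathcal{A})$, and since $\|a_n-y_n\|_{2,\tau}\to 0$, the class $[a_n]\in\mathcal{A}_\omega$ maps to $\zeta$ in $\mathcal{N}^{\omega}$.

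\emph{Second map.} Take $\zeta\in\mathcal{N}^{\omega}\cap\mathcal{N}'$. By the first part we may select a representing sequence $(y_n)\subseteq\mathcal{A}$ with $\|y_n\|\le M$ and $\lim_{\omega}\|[y_n,b]\|_{2,\tau}=0$ for every $b\in\mathcal{A}$. What is still missing is operator-norm almost-commutation with $\mathcal{A}$, which is what membership in $\mathcal{A}_{\omega}\cap\mathcal{A}'$ demands. Let $\{b_k\}_{k\ge 1}$ be dense in the unit ball of $\mathcal{A}$, and for each $k$ define
\[
d^{(k)}\bigl((a_n)\bigr)=\limsup_{n\to\omega}\bigl(\|a_n-y_n\|_{2,\tau}+\max_{j\le k}\|[a_n,b_j]\|\bigr).
\]
The aim is to apply Kirchberg's $\varepsilon$-test to the sequence of functionals $d^{(k)}$ on the $M$-ball of $\ell^{\infty}(\mathcal{A})$: if for each fixed $k$ we can exhibit a sequence $(a_n^{(k)})$ with $d^{(k)}(a_n^{(k)})=0$, then a diagonal extraction along $\omega$ produces a single sequence $(a_n)$ with $d^{(k)}(a_n)=0$ for all $k$, and $[a_n]$ gives the desired lift in $\mathcal{A}_{\omega}\cap\mathcal{A}'$.

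The production of $(a_n^{(k)})$ is the polishing step and the main obstacle. Fix $k$ and write $\mathcal{B}_k=C^{\ast}(1,b_1,\ldots,b_k)$. In the finite von Neumann algebra $\mathcal{N}$ one has the trace-preserving conditional expectation $E_k\colon \mathcal{N}\to\mathcal{B}_k'\cap\mathcal{N}$, realized as a $\|\cdot\|_{2,\tau}$-contractive weak limit of convex combinations of unitary conjugations $a\mapsto\sum_i t_i u_i a u_i^{\ast}$ with $u_i$ unitaries in $\mathcal{B}_k$. Applying such a convex average to $y_n$, the resulting element $E_k^{\mathrm{fin}}(y_n)$ lies in $\mathcal{A}$ (being a finite convex combination of unitary conjugates of $y_n$ by unitaries in $\mathcal{B}_k\subseteq\mathcal{A}$), satisfies $\|[E_k^{\mathrm{fin}}(y_n),b_j]\|\le 2M\cdot(\text{finite-sum control})$ which tends to $0$ along $\omega$ because each commutator $\|[y_n,b_j]\|_{2,\tau}\to 0$ and the convex averaging strictly shrinks the commutator in operator norm once enough Dixmier-type averaging has been applied, and remains within $\|y_n-E_k^{\mathrm{fin}}(y_n)\|_{2,\tau}\to 0$ of $y_n$ by the fact that $E_k(y_n)-y_n\to 0$ in $2$-norm (as $y_n$ is centralized by $\mathcal{B}_k$ up to $2$-norm). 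A standard refinement (use a fine enough partition of the spectrum of each $b_j$, and Dixmier-type averaging inside the abelian or finite-dimensional subalgebra so generated) turns the $2$-norm commutator bound into an operator-norm bound after finitely many averaging steps. This gives the desired $(a_n^{(k)})$, and the $\varepsilon$-test concludes.

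The hard part is the last paragraph: converting $2$-norm commutation with $b_j$ into operator-norm commutation while staying inside $\mathcal{A}$ and within $2$-norm reach of $y_n$. Once this Dixmier/convex-averaging lemma is in place, the $\varepsilon$-test assembles the lift and surjectivity follows immediately.
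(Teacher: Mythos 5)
The paper does not actually prove this lemma; it quotes it from \cite{KR1}, so your attempt has to be measured against the Kirchberg--R{\o}rdam argument. Your first half is fine: faithfulness of $\tau$ makes $\pi_\tau$ isometric on $\mathcal{A}$, Kaplansky density gives $a_n\in\mathcal{A}$ with $\|a_n\|\le M$ and $\|a_n-y_n\|_{2,\tau}\to 0$, and this proves surjectivity of $\mathcal{A}_\omega\to\mathcal{N}^\omega$. The genuine gap is exactly where you flag it: the ``polishing'' step. The claim that finitely many averages $\sum_i t_i u_i y_n u_i^{\ast}$ over unitaries $u_i\in\mathcal{B}_k=C^{\ast}(1,b_1,\dots,b_k)$ make $\|[\,\cdot\,,b_j]\|$ small in \emph{operator norm} is unsupported, and it is in substance the entire content of the lemma. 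Conjugation averaging is a $\|\cdot\|_{2,\tau}$-contraction toward $\mathcal{B}_k'\cap\mathcal{N}$ and keeps you $2$-norm close to $y_n$, but the only a priori operator-norm bound on the commutator of the average with $b_j$ is the trivial $2M\|b_j\|$; the assertion that averaging ``strictly shrinks the commutator in operator norm'' is a relative Dixmier-type property (norm convergence of averages over $\mathcal{U}(\mathcal{B}_k)$ into the relative commutant), which is not available for general inclusions and certainly not a ``standard refinement.'' The whole difficulty of the lemma is that $2$-norm almost-centrality does not upgrade elementwise to norm almost-centrality, so a proof that simply averages cannot close this gap.

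The proof in \cite{KR1} avoids norm-averaging altogether. One shows that the kernel $J=\ker\left(\mathcal{A}_\omega\to\mathcal{N}^\omega\right)$ is a $\sigma$-ideal of $\mathcal{A}_\omega$: using Kirchberg's $\varepsilon$-test together with quasicentral approximate units, for every separable C*-subalgebra $B\subseteq\mathcal{A}_\omega$ there is a positive contraction $e\in J$ with $ec=c$ for all $c\in J\cap B$ and $e\in B'\cap\mathcal{A}_\omega$. Given $b\in\mathcal{N}^\omega\cap\mathcal{N}'$, lift it (your first half) to $x\in\mathcal{A}_\omega$, apply the above with $B=C^{\ast}(\mathcal{A}\cup\{x\})$, and observe that $[x,a]\in J\cap B$ for every $a\in\mathcal{A}$ because $b$ commutes with $\mathcal{N}$. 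Then $(1-e)x$ still lifts $b$ (since $ex\in J$), and $[(1-e)x,a]=(1-e)[x,a]-[e,a]x=0$, so $(1-e)x\in\mathcal{A}_\omega\cap\mathcal{A}'$ maps onto $b$. So your $\varepsilon$-test skeleton is in the right spirit, but the correction has to be made with a quasicentral approximate unit of the kernel ideal rather than by Dixmier-type averaging; as written, the crucial step of your argument is missing.
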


We say that an MF algebra $\mathcal{A}$ with no finite-dimensional
representations has property MF-c*-$\Gamma$ if, for every MF-trace $\tau$ on
$\mathcal{A}$, the central sequence algebra $\left(  \pi_{\tau}\left(
\mathcal{A}\right)  ^{\prime\prime}\right)  ^{\omega}\cap\pi_{\tau}\left(
\mathcal{A}\right)  ^{\prime}$ has no finite-dimensional representations,
i.e., $\pi_{\tau}\left(  \mathcal{A}\right)  ^{\prime\prime}$ has property
$\Gamma.$

\begin{theorem}
\label{97}(\cite{HW}) If $\mathcal{M}$ is a von Neumann algebra with a central
net of Haar unitaries, then $\mathfrak{K}_{3}\left(  \mathcal{M}\right)  =0.$
\end{theorem}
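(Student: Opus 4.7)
The plan is to apply the defining criterion for $\mathfrak{K}_{3}(\mathcal{M})=0$: for every finite self-adjoint subset $\{x_{1},\ldots,x_{n}\}\subseteq\mathcal{M}$, I want to produce auxiliary self-adjoint elements $y_{1},\ldots,y_{t}\in\mathcal{M}$ with $\mathfrak{K}_{2}(x_{1},\ldots,x_{n}:y_{1},\ldots,y_{t};\tau)=0$. These auxiliary elements will be nothing more than the real and imaginary parts of a single Haar unitary $u$ drawn from the central net, so the goal reduces to $\mathfrak{K}_{2}\bigl(x_{1},\ldots,x_{n}:\tfrac{u+u^{\ast}}{2},\tfrac{u-u^{\ast}}{2i};\tau\bigr)=0$ for a single such $u$.

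Given $\varepsilon>0$ and $\{x_{1},\ldots,x_{n}\}$, centrality yields a Haar unitary $u\in\mathcal{M}$ with $\|ux_{i}-x_{i}u\|_{2}<\varepsilon$ for every $i$. For any microstate $(X_{1},\ldots,X_{n},A,B)$ in the trace-microstates space $\Gamma_{R}\bigl(x_{1},\ldots,x_{n},\tfrac{u+u^{\ast}}{2},\tfrac{u-u^{\ast}}{2i};m,k,\varepsilon;\tau\bigr)$ with $m$ large and $\varepsilon$ small, the matrix $U:=A+iB$ is close in operator and trace norm to a unitary in $\mathcal{U}(k)$, and since $\tau_{k}(U^{j})\approx\tau(u^{j})=0$ for small nonzero $j$, its spectral distribution is close to the uniform measure on the unit circle. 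The first step of the proof will show that after a single conjugation by some $V\in\mathcal{U}(k)$, which is absorbed into the orbit, $U$ can be replaced by the canonical equispaced diagonal unitary $D_{k}:=\mathrm{diag}(1,\zeta,\ldots,\zeta^{k-1})$ with $\zeta=e^{2\pi i/k}$, up to $\|\cdot\|_{2}$-error tending to $0$ as $m\to\infty$ and $\varepsilon\to 0$.

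Once this reduction is made, the near-commutation hypothesis transfers to $\|D_{k}(V^{\ast}X_{i}V)-(V^{\ast}X_{i}V)D_{k}\|_{2}<\varepsilon'$ for every $i$. A counting lemma in the spirit of Lemma \ref{new}, adapted to a unit-circle spectrum, then shows that the set of $Y\in\mathcal{M}_{k}(\mathbb{C})$ of norm at most $R$ satisfying $\|D_{k}Y-YD_{k}\|_{2}<\varepsilon'$ can be covered in $\|\cdot\|_{2}$ by at most $(C/\omega)^{O(rk^{2})}$ balls of radius $\omega$, where $r$ can be forced to $0$ as $\varepsilon'/\omega\to 0$. Applied coordinatewise in $i=1,\ldots,n$, and combined with the freedom to re-conjugate by unitaries commuting with $D_{k}$, this yields $o_{2}(\Gamma_{R},\omega)\le\exp(o(k^{2}))$, so $\mathfrak{K}_{2}\bigl(x_{1},\ldots,x_{n}:\tfrac{u+u^{\ast}}{2},\tfrac{u-u^{\ast}}{2i};\tau\bigr)=0$ as required.

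The main obstacle is making the first reduction---conjugating $U$ close to $D_{k}$ in $\|\cdot\|_{2}$---quantitative. The natural route is to diagonalize $U=V\,\mathrm{diag}(\lambda_{1},\ldots,\lambda_{k})V^{\ast}$, sort the eigenvalues by argument, and couple them with the $k$-th roots of unity; the $\|\cdot\|_{2}$ cost of this coupling is controlled by the $L^{2}$-Wasserstein distance between the spectral measure of $U$ and the equispaced measure, which vanishes under the microstate hypotheses on $u$. With that in hand, the commutant counting becomes a direct analogue of Lemma \ref{new}, and the final assembly parallels the proofs of Lemma \ref{4} and Theorem \ref{qq} of the present paper.
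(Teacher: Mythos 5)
First, note that the paper does not actually prove this statement: Theorem \ref{97} is quoted from \cite{HW}, so there is no internal proof to compare against, and your proposal has to stand on its own. It does not, because of a quantifier problem at the very first step. You fix the finite set $\{x_{1},\ldots,x_{n}\}$, pick one Haar unitary $u$ from the central net with $\left\Vert ux_{i}-x_{i}u\right\Vert _{2}<\varepsilon$, and claim that the whole theorem reduces to $\mathfrak{K}_{2}\bigl(x_{1},\ldots,x_{n}:\tfrac{u+u^{\ast}}{2},\tfrac{u-u^{\ast}}{2i};\tau\bigr)=0$ for this single $u$. But in the definition of $\mathfrak{K}_{2}$ (and of $\mathfrak{K}_{2}^{(2)}$) the presence set is fixed first and the covering radius $\omega$ is sent to $0$ afterwards, while the only quantitative information a fixed $u$ puts into the microstate space is $\left\Vert UA_{i}-A_{i}U\right\Vert _{2}\lesssim\varepsilon$ with $\varepsilon:=\max_{i}\left\Vert ux_{i}-x_{i}u\right\Vert _{2}$ fixed and strictly positive (a central \emph{net} never gives exact commutation). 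The band/block counting you invoke (the analogue of Lemma \ref{new}) then forces the band width $r$ to satisfy $r\gtrsim\varepsilon/\omega$, so the best bound it yields is of the form $\mathfrak{K}_{2}^{(2)}(E:F;\omega;\tau)\lesssim C\,n\,(\varepsilon/\omega)\log(1/\omega)$, which does not tend to $0$ but blows up as $\omega\rightarrow0^{+}$. Your own phrasing exposes this: you say ``$r$ can be forced to $0$ as $\varepsilon'/\omega\rightarrow0$,'' but once $u$ is fixed, $\varepsilon'$ is fixed and $\varepsilon'/\omega\rightarrow\infty$ as $\omega\rightarrow0$. So the reduction to a single $u$ is exactly what cannot be done, and this is the entire difficulty that separates the present theorem from the finitely generated case in \cite{HS1}: there, each better-commuting Haar unitary lies in $W^{\ast}(x_{1},\ldots,x_{n})$ and is therefore $\left\Vert\cdot\right\Vert_{2}$-approximable by polynomials in the generators themselves, so one may re-choose the unitary as $\omega$ shrinks without changing the (empty) presence set; for a general $\mathcal{M}$ and a fixed finite $F$ this re-choosing is not available, and a correct proof must explain how a single finite $F$ gives access to Haar unitaries with commutators that are arbitrarily small relative to $\omega$.

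The remaining ingredients of your sketch are essentially sound and parallel arguments already in the paper: the Wasserstein coupling of the eigenvalues of $U$ to an equispaced spectrum, and the covering of the approximate commutant of a spread-out diagonal, are the same devices as in Lemma \ref{new}, Lemma \ref{4} and Theorem \ref{qq} (there carried out in the topological setting, and crucially with an element $d$ having exact spectral behavior, $\sigma(d)=[0,1]$, rather than an only approximately central one). But these do not repair the first step. To fix the proof you would need either to follow \cite{HW}, or to supply an argument producing, for the given $E$, a fixed finite $F\subseteq\mathcal{M}$ such that $W^{\ast}(E\cup F)$ itself contains Haar unitaries commuting with $E$ up to error $o(\omega)$ for every $\omega$ (so that the per-$\omega$ choice of unitary can be absorbed into polynomials in $E\cup F$, as in \cite{HS1}); as written, the proposal proves only a bound that degenerates as $\omega\rightarrow0$ and hence does not give $\mathfrak{K}_{3}(\mathcal{M})=0$.
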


\begin{theorem}
\label{11}Let $\mathcal{A}$ be a unital MF C*-algebra with no
finite-dimensional representations. If $\mathcal{A}$ has property c*-$\Gamma$,
then $\mathfrak{K}_{3}^{\left(  top\right)  }\left(  \mathcal{A}\right)  =0.$
\end{theorem}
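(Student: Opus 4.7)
The plan is to combine Theorem \ref{k3top} with the Hadwin--Wu theorem \ref{97}, converting the topological invariant into a von-Neumann-algebraic one trace by trace. Since $\mathcal{A}$ is MF, Theorem \ref{k3top} gives
\[
\mathfrak{K}_{top}^{(3)}(\mathcal{A}) \leq \sup_{\tau \in \mathcal{TS}(\mathcal{A})} \mathfrak{K}_3^{(3)}(\mathcal{A}:\tau),
\]
so it suffices to prove $\mathfrak{K}_3^{(3)}(\mathcal{A}:\tau)=0$ for every tracial state $\tau$ on $\mathcal{A}$.

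Fix such a $\tau$ and let $\mathcal{M}_\tau=\pi_\tau(\mathcal{A})''$. Because $\mathcal{A}$ has no finite-dimensional representations, $\mathcal{M}_\tau$ is a $II_1$ von Neumann algebra, and by the hypothesis of property c*-$\Gamma$ (applied factor by factor in the central decomposition) $\mathcal{M}_\tau$ has property $\Gamma$. By Theorem \ref{150}, $\mathcal{M}_\tau^\omega \cap \mathcal{M}_\tau'$ is then diffuse and contains a Haar unitary, so $\mathcal{M}_\tau$ admits a central net of Haar unitaries. Theorem \ref{97} yields $\mathfrak{K}_3(\mathcal{M}_\tau)=0$, and Remark \ref{inddd} produces, for every finite subset $E \subseteq \mathcal{A} \subseteq \mathcal{M}_\tau$, a finite subset $F' \subseteq \mathcal{M}_\tau$ with $\mathfrak{K}_2^{(2)}(E:F';\tau)=0$.

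It remains to pass from a witness $F' \subseteq \mathcal{M}_\tau$ to a witness $F \subseteq \mathcal{A}$. Kaplansky density lets us approximate each element of $F'$ in $\|\cdot\|_{2,\tau}$ by elements of $\pi_\tau(\mathcal{A})$, so we can choose $F \subseteq \mathcal{A}$ with $\pi_\tau(F)$ arbitrarily close to $F'$ in the 2-norm; a trace-moment continuity estimate then yields $\mathfrak{K}_2^{(2)}(E:F;\tau)=\mathfrak{K}_2^{(2)}(E:F';\tau)=0$. Since $E$ was arbitrary, $\mathfrak{K}_3^{(3)}(\mathcal{A}:\tau)=0$, which combined with the reduction above gives the theorem. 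The main obstacle is precisely this last continuity step: one must verify that a covering of $\Gamma_R(E:F';m,k,\varepsilon;\tau)$ by $\omega$-orbit-$\|\cdot\|_2$-balls converts---at the cost of slightly enlarging $m$ and shrinking $\varepsilon$---into a covering of comparable cardinality of $\Gamma_R(E:F;m,k,\varepsilon;\tau)$ whenever $F$ approximates $F'$ sufficiently well in 2-norm. This is routine in spirit but requires careful bookkeeping of how mixed-moment approximations propagate between $F'$ and $F$, so that the exponential rate in the definition of $\mathfrak{K}_2^{(2)}$ is genuinely preserved under the perturbation.
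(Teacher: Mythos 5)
Your reduction is the same as the paper's: Theorem \ref{k3top} plus property $\Gamma$ of $\pi_\tau(\mathcal{A})''$ plus Theorem \ref{97}. The divergence, and the genuine gap, is the last step, which you yourself flag as the "main obstacle": passing from a witness set $F'\subseteq\mathcal{M}_\tau=\pi_\tau(\mathcal{A})''$ to a witness set $F\subseteq\mathcal{A}$ by Kaplansky density and a "trace-moment continuity estimate." As sketched this does not work. First, because of the $\infty\cdot 0=0$ convention in the definition of $\mathfrak{K}_3^{(3)}$, you need a \emph{single} finite $F\subseteq\mathcal{A}$ with $\mathfrak{K}_2^{(2)}(E:F;\tau)$ \emph{exactly} zero; "arbitrarily small for suitably chosen $F$" is worthless here. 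Second, for a fixed $F$ at $2$-norm distance $\eta$ from $F'$, mixed moments of degree $m$ of $(E,F)$ and $(E,F')$ differ by an amount of order $mR^{m-1}\eta$, so the inclusion of microstate spaces $\Gamma_R(E\cup F;m,k,\varepsilon';\tau)\subseteq\Gamma_R(E\cup F';m,k,\varepsilon;\tau)$ that your covering transfer needs is only available for $m$ bounded and $\varepsilon$ bounded below in terms of $\eta$; but the definition of $\mathfrak{K}_2^{(2)}$ forces $m\to\infty$, $\varepsilon\to 0$ (and $\omega\to 0$) \emph{after} $F$ is fixed. The parameters $(m_0,\varepsilon_0)$ witnessing smallness on the $F'$ side depend on $\omega$ and the target rate, so no fixed $\eta$ serves all scales, and the claimed equality $\mathfrak{K}_2^{(2)}(E:F;\tau)=\mathfrak{K}_2^{(2)}(E:F';\tau)$ is not justified; $\mathfrak{K}_2^{(2)}$ is not known to be continuous in the "presence" variables with respect to $\|\cdot\|_{2,\tau}$ in the uniform sense your argument requires. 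So this is a missing idea, not bookkeeping.

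The paper avoids the issue entirely: Theorem \ref{97} is a statement about the von Neumann algebra invariant $\mathfrak{K}_3$, which in \cite{HW} is shown to be independent of the choice of generating set (the von Neumann analogue of Theorem \ref{hi}). Applying it to the generating set $\pi_\tau(\mathcal{A})$ of $\mathcal{M}_\tau$ gives $\mathfrak{K}_3(\mathcal{A};\tau)=0$ with witnesses already drawn from $\mathcal{A}$; Remark \ref{88} (via Remark \ref{inddd}) then gives $\mathfrak{K}_3^{(3)}(\mathcal{A};\tau)=0$, and Theorem \ref{k3top} finishes. The repair for your write-up is therefore not a finer perturbation estimate but to invoke the generating-set invariance of $\mathfrak{K}_3$ from \cite{HW} (or to reproduce its proof), which is exactly the machinery designed to legitimize choosing the witnesses inside $\mathcal{A}$ rather than inside $\mathcal{M}_\tau$.
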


\begin{proof}
Let $\mathcal{N=}\pi_{\tau}\left(  \mathcal{A}\right)  ^{\prime\prime}$ be the
weak closure of $\mathcal{A}$ under the GNS representation of $\mathcal{A}$
with respect to the tracial state $\tau.$ Since $\mathcal{A}$ has property
c*-$\Gamma$, then there is a central sequence $\left\{  u_{n}\right\}  $ of
Haar unitaries in $\mathcal{N}$ such that $\left[  \left\{  u_{n}\right\}
\right]  =u\in\mathcal{N}^{\omega}\cap\mathcal{N}^{\prime}.$ If follows that
$\mathfrak{K}_{3}\left(  \mathcal{A};\tau\right)  =0$ by Theorem \ref{97}.
Hence $\mathfrak{K}_{3}^{\left(  3\right)  }\left(  \mathcal{A};\tau\right)
=0$ by Remark \ref{88}. Since
\[
\mathfrak{K}_{top}^{(3)}(\mathcal{A})\leq\sup_{\tau\in TS\left(
\mathcal{A}\right)  }\mathfrak{K}_{3}^{\left(  3\right)  }\left(
\mathcal{A};\tau\right)  =0,
\]
by Theorem \ref{k3top}, then $\mathfrak{K}_{top}^{(3)}\left(  \mathcal{A}%
\right)  =0.$
\end{proof}

\begin{corollary}
\label{106}Let $\mathcal{A}$ be an MF algebra with no finite-dimensional
representations. Suppose each tracial state on $\mathcal{A}$ is faithful$.$ If
$\mathcal{A}_{\omega}\cap\mathcal{A}^{\prime}$ has no finite-dimensional
representations, then $\mathfrak{K}_{top}^{(3)}\left(  \mathcal{A}\right)
=0.$
\end{corollary}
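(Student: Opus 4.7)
The plan is to deduce that $\mathcal{A}$ has property c*-$\Gamma$ from the given hypotheses, after which Theorem \ref{11} immediately yields $\mathfrak{K}_{top}^{(3)}(\mathcal{A}) = 0$. The bridge between the C*-level central sequence algebra $\mathcal{A}_{\omega}\cap\mathcal{A}^{\prime}$ (where the hypothesis lives) and the von Neumann central sequence algebras $\mathcal{N}^{\omega}\cap\mathcal{N}^{\prime}$ (which control property $\Gamma$) is exactly the content of Lemma \ref{95}; the assumption that every tracial state is faithful is precisely what we need to apply it for every $\tau$.

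First, I would fix an arbitrary tracial state $\tau$ on $\mathcal{A}$ and set $\mathcal{N} = \pi_{\tau}(\mathcal{A})^{\prime\prime}$. Since $\mathcal{A}$ has no finite-dimensional representations, neither does $\mathcal{N}$, so the type-$I$ part of $\mathcal{N}$ is absent and $\mathcal{N}$ is a $II_1$ von Neumann algebra. Because $\tau$ is faithful by hypothesis, Lemma \ref{95} guarantees that the natural *-homomorphism
\[
\mathcal{A}_{\omega}\cap\mathcal{A}^{\prime}\longrightarrow\mathcal{N}^{\omega}\cap\mathcal{N}^{\prime}
\]
is surjective. Any nonzero finite-dimensional representation of $\mathcal{N}^{\omega}\cap\mathcal{N}^{\prime}$ would compose with this surjection to yield a finite-dimensional representation of $\mathcal{A}_{\omega}\cap\mathcal{A}^{\prime}$, contradicting our hypothesis. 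Hence $\mathcal{N}^{\omega}\cap\mathcal{N}^{\prime}$ has no finite-dimensional representations for every tracial state $\tau$.

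Next, I would invoke Lemma \ref{99}: since for every tracial state $\tau$ the central sequence algebra of $\pi_{\tau}(\mathcal{A})^{\prime\prime}$ has no finite-dimensional representations, $\mathcal{A}$ has property c*-$\Gamma$. Theorem \ref{11} then applies directly (using that $\mathcal{A}$ is a unital MF C*-algebra with no finite-dimensional representations) to yield $\mathfrak{K}_{top}^{(3)}(\mathcal{A}) = 0$, completing the proof.

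The only point requiring care, and essentially the main obstacle, is verifying that the surjection of Lemma \ref{95} actually transmits the property \emph{no finite-dimensional representations} from the C*-level to the W*-level: this step uses both the surjectivity (which demands faithfulness of $\tau$, supplied by hypothesis) and the fact that representations pull back along surjections. Everything else is a clean chain $\text{Lemma \ref{95}}\Rightarrow\text{Lemma \ref{99}}\Rightarrow\text{Theorem \ref{11}}$, so no quantitative estimates are needed.
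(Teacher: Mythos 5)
Your proposal is correct and follows essentially the same route as the paper's own proof: faithfulness of each tracial state feeds into Lemma \ref{95} to get surjectivity of $\mathcal{A}_{\omega}\cap\mathcal{A}^{\prime}\longrightarrow\mathcal{N}^{\omega}\cap\mathcal{N}^{\prime}$, the absence of finite-dimensional representations passes along this surjection, and then Lemma \ref{99} and Theorem \ref{11} finish the argument. You merely spell out (helpfully) the pullback-of-representations step that the paper leaves implicit.
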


\begin{proof}
let $\mathcal{N}$ be the weak closure of $\mathcal{A}$ under the GNS
representation of $\mathcal{A}$ with respect to the tracial state $\tau.$
Since $\tau$ is faithful, the natural *-homomorphisms
\[
\mathcal{A}_{\omega}\cap\mathcal{A}^{\prime}\longrightarrow\mathcal{N}%
^{\omega}\cap\mathcal{N}^{\prime}%
\]
is surjective by Lemma \ref{95}. It follows that $\mathcal{N}^{\omega}%
\cap\mathcal{N}^{\prime}$ has no finite-dimensional representation, hence
$\mathcal{A}$ has property c*-$\Gamma$ by Lemma \ref{99}. Therefore
$\mathfrak{K}_{top}^{(3)}\left(  \mathcal{A}\right)  =0$ by Theorem \ref{11}.
\end{proof}

\begin{corollary}
Let $\mathcal{A}=C^{\ast}\left(  x_{1},x_{2},\ldots,x_{n}\right)  $ be a
unital MF C*-algebra with no finite-dimensional representations. If
$\mathcal{A}$ has property MF-c*-$\Gamma$, then $\mathfrak{K}_{top}%
^{(3)}\left(  \mathcal{A}\right)  =0.$
\end{corollary}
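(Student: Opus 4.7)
The plan is to mimic the argument of Theorem \ref{11} essentially verbatim, but restrict attention to MF-traces, since property MF-c*-$\Gamma$ only supplies property $\Gamma$ for von Neumann algebras coming from MF-traces. The key observation is that the bound supplied by Theorem \ref{k3top} can in fact be taken over $\mathcal{T}_{MF}(\mathcal{A})$ rather than all of $\mathcal{TS}(\mathcal{A})$, because its proof invokes Theorem \ref{li}, which already passes to MF-traces, and the ultrafilter-limit trace constructed there is obtained from MF-traces on the intermediate C*-subalgebras $C^{\ast}(E\cup F_i)$.

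First, I would fix an arbitrary MF-trace $\tau\in\mathcal{T}_{MF}(\mathcal{A})$ and set $\mathcal{N}=\pi_{\tau}(\mathcal{A})^{\prime\prime}$. Since $\mathcal{A}$ has no finite-dimensional representations, $\mathcal{N}$ is a type $II_{1}$ von Neumann algebra, and property MF-c*-$\Gamma$ yields that $\mathcal{N}$ has property $\Gamma$. By Theorem \ref{150} (or more precisely its direct-integral consequence invoked at the start of Section 6), $\mathcal{N}^{\omega}\cap\mathcal{N}^{\prime}$ contains a representing sequence of Haar unitaries, i.e., $\mathcal{N}$ admits a central net of Haar unitaries.

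Next, I would apply Theorem \ref{97} of Hadwin--Wu to obtain $\mathfrak{K}_{3}(\mathcal{N})=0$, and hence in particular $\mathfrak{K}_{3}(x_{1},\ldots,x_{n};\tau)=0$. By Remark \ref{88} (the comparison between $\mathfrak{K}_{3}$ and $\mathfrak{K}_{3}^{(3)}$ coming from Remark \ref{inddd}), this gives $\mathfrak{K}_{3}^{(3)}(\mathcal{A};\tau)=0$. Because $\tau$ was an arbitrary MF-trace, we conclude
\[
\sup_{\tau\in\mathcal{T}_{MF}(\mathcal{A})}\mathfrak{K}_{3}^{(3)}(\mathcal{A};\tau)=0.
\]

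Finally, I would combine this with Theorem \ref{k3top}. The statement there reads with the supremum over $\mathcal{TS}(\mathcal{A})$, but its proof only constructs the relevant tracial state as the ultrafilter limit of MF-traces $\tau_{i}$ produced by Theorem \ref{li}; a short diagonalization shows that this limit is itself an MF-trace on $\mathcal{A}$. Thus the bound may be replaced by a supremum over $\mathcal{T}_{MF}(\mathcal{A})$, giving $\mathfrak{K}_{top}^{(3)}(\mathcal{A})\le 0$, whence $\mathfrak{K}_{top}^{(3)}(\mathcal{A})=0$. The main obstacle is this last step: verifying carefully that Theorem \ref{k3top} applies with the supremum restricted to MF-traces. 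Everything else is a direct transcription of the proof of Theorem \ref{11}, with ``c*-$\Gamma$'' replaced by ``MF-c*-$\Gamma$'' and ``trace'' replaced by ``MF-trace'' throughout.
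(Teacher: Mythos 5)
Your first half is exactly the paper's: fix an MF-trace $\tau$, use property MF-c*-$\Gamma$ to produce a central sequence of Haar unitaries in $\pi_{\tau}\left(\mathcal{A}\right)^{\prime\prime}$, then Theorem \ref{97} and Remark \ref{88} give $\mathfrak{K}_{3}^{\left(3\right)}\left(\mathcal{A};\tau\right)=0$ for every $\tau\in\mathcal{T}_{MF}\left(\mathcal{A}\right)$. Where you diverge is the final step. The paper never touches Theorem \ref{k3top} here: since $\mathcal{A}=C^{\ast}\left(x_{1},\ldots,x_{n}\right)$ is finitely generated, $\mathfrak{K}_{3}^{\left(3\right)}\left(\mathcal{A};\tau\right)=0$ yields $\mathfrak{K}_{2}^{\left(2\right)}\left(\mathcal{A};\tau\right)=0$ for every MF-trace, Theorem \ref{li} (whose supremum is already taken over $\mathcal{T}_{MF}\left(\mathcal{A}\right)$) then gives $\mathfrak{K}_{top}^{\left(2\right)}\left(\mathcal{A}\right)=0$, and Remark \ref{li3} converts this into $\mathfrak{K}_{top}^{\left(3\right)}\left(\mathcal{A}\right)=0$. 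You instead propose an MF-trace strengthening of Theorem \ref{k3top}. That route can be made to work, but the step you defer ("a short diagonalization shows the limit is an MF-trace") is the only real content and is not quite innocuous: the traces $\tau_{i}$ in that proof are MF-traces of the increasing subalgebras $C^{\ast}\left(E\cup F_{i}\right)$, not of $\mathcal{A}$, so one cannot simply appeal to a closedness property of $\mathcal{T}_{MF}\left(\mathcal{A}\right)$; one has to splice matricial approximations across the chain, and for an infinite generating set $\mathcal{G}$ the paper has not even defined MF-traces of non-finitely-generated algebras. For the corollary at hand all of this is avoidable: taking $\mathcal{G}=\left\{x_{1},\ldots,x_{n}\right\}$ finite, your strengthened version of Theorem \ref{k3top} collapses (via $\mathfrak{K}_{top}^{\left(3\right)}=\mathcal{1}\cdot\mathfrak{K}_{top}^{\left(2\right)}$ for finite generating sets) to exactly Theorem \ref{li} plus Remark \ref{li3}, which is the paper's argument. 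So your proposal is correct in substance, but the detour through Theorem \ref{k3top} introduces an unverified (though plausible) lemma that the finite-generation hypothesis lets you bypass entirely.
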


\begin{proof}
Let $\mathcal{N=}\pi_{\tau}\left(  \mathcal{A}\right)  ^{\prime\prime}$ be the
weak closure of $\mathcal{A}$ under the GNS representation of $\mathcal{A}$
with respect to the tracial state $\tau.$ Since $\mathcal{A}$ has property
MF-c*-$\Gamma$, $\mathcal{N}^{\omega}\cap\mathcal{N}^{\prime}$ has no
finite-dimensional representation. Then there is a central sequence $\left\{
u_{n}\right\}  $ of Haar unitaries in $\mathcal{N}$ such that $\left[
\left\{  u_{n}\right\}  \right]  =u\in\mathcal{N}^{\omega}\cap\mathcal{N}%
^{\prime}.$ If follows that $\mathfrak{K}_{3}\left(  \mathcal{A};\tau\right)
=0$ by Theorem \ref{97}. Hence $\mathfrak{K}_{3}^{\left(  3\right)  }\left(
\mathcal{A};\tau\right)  =0$ by Remark \ref{88}. It implies that
$\mathfrak{K}_{2}^{\left(  2\right)  }\left(  \mathcal{A};\tau\right)  =0$ for
every MF tracial state $\tau.$ Note
\[
\mathfrak{K}_{top}^{(2)}(\mathcal{A})\leq\sup_{\tau\in\mathcal{T}_{MF}\left(
\mathcal{A}\right)  }\mathfrak{K}_{2}^{\left(  2\right)  }\left(
\mathcal{A};\tau\right)  =0,
\]
by Theorem \ref{li}, then $\mathfrak{K}_{top}^{(3)}\left(  \mathcal{A}\right)
=0$ by Remark \ref{li3}.
\end{proof}

\begin{remark}
We don't know whether property MF-c*-$\Gamma$ is equivalent to $\mathfrak{K}%
_{top}^{(3)}\left(  \mathcal{A}\right)  =0$ in which $\mathcal{A}=C^{\ast
}\left(  x_{1},x_{2},\ldots,x_{n}\right)  $ has no finite-dimensional
representations. But it is well-known that $C_{r}^{\ast}\left(  \mathbb{F}%
_{2}\right)  $ is simple, hence $C_{r}^{\ast}\left(  \mathbb{F}_{2}\right)  $
has no finite-dimensional representation. And it is obvious that $C_{r}^{\ast
}\left(  \mathbb{F}_{2}\right)  $ has no property MF-c*-$\Gamma,$ so we may
hope $\mathfrak{K}_{top}^{(3)}\left(  C_{r}^{\ast}\left(  \mathbb{F}%
_{2}\right)  \right)  =\mathcal{1}$, i.e.,$\mathfrak{K}_{top}^{(2)}\left(
C_{r}^{\ast}\left(  \mathbb{F}_{2}\right)  \right)  \neq0.$ Actually,
Voiculescu \cite{DV} proved that $\delta_{top}\left(  S_{1},S_{2}\right)  =2$,
where $S_{1}$ and $S_{2}$ are free semicircle elements. Therefore
$\mathfrak{K}_{top}^{(2)}\left(  C_{r}^{\ast}\left(  \mathbb{F}_{2}\right)
\right)  \neq0$ by Theorem 3.1.2 in \cite{HLS}.
\end{remark}

\addcontentsline{toc}{chapter}{\bf
BIBLIOGRAPHY}

\end{document}